\def\bb0{\mathbb{0}}
\def\cC{\mathcal{C}}
\def\cD{\mathcal{D}}
\def\EE{\mathbb{E}}
\def\PP{\mathbb{P}}
\def\cF{\mathcal{F}}
\def\cH{\mathcal{H}}
\def\cI{\mathcal{I}}
\def\fI{\mathfrak{I}}
\def\cJ{\mathcal{J}}
\def\bfr{\mathbf{r}}
\def\RR{\mathbb{R}}
\def\cS{\mathcal{S}}
\def\bfs{\mathbf{s}}
\def\vbfs{\vec{\mathbf{s}}}
\def\vbfr{\vec{\mathbf{r}}}
\def\vrho{\vec{\bm{\rho}}}
\def\cS{\mathcal{S}}
\def\bT{\mathbb{T}}
\def\bfx{\mathbf{x}}
\def\Om{\Omega}
\def\fB{\mathfrak{B}}
\newcommand\HH{\mathfrak H}
\def\th{{\theta}}
\def\al{{\alpha}}
\def\ls{{\lesssim}}
\def\be{{\beta}}
\def\la{{\lambda}}
\def\si{{\sigma}}
\def\De{{\Delta}}
\def\Om{{\Omega}}
\def\al{{\alpha}}
\def\be{{\beta}}
\def\ga{{\gamma}}
\def\De{{\Delta}}
\def\si{{\sigma}}
\def\la{{\lambda}}
\def\vare{{\varepsilon}}
\def\rank{{\rm rank }}
\def\beq{\begin{eqnarray}}
\def\eeq{\end{eqnarray}}
\def\bsp{\begin{equation}
\begin{split}}
\def\esp{\end{split}
\end{equation}}
\def\ba{\begin{align}}
\def\ea{\end{align}}
\def\add{{\hbox{\tiny  add}} }
\newcommand{\ep}{\varepsilon}
\newcommand{\1}{{\bf 1}}
\newcommand{\lc}{\left(}
\newcommand{\rc}{\right)}
\newcommand{\lt}{\left }
\newcommand{\rt}{\right}
\newtheorem{Thm}{Theorem}
\newtheorem{lemma}[Thm]{Lemma}
\newtheorem{proposition}[Thm]{Proposition}
\newtheorem{corollary}[Thm]{Corollary}
\newtheorem{definition}[Thm]{Definition}
\theoremstyle{Def}
\newtheorem{example}[Thm]{Example}
\newtheorem{remark}[Thm]{Remark}
\numberwithin{Thm}{section}
\numberwithin{equation}{section}
\begin{document}

\title{In search of necessary   and  sufficient  conditions to  solve  the parabolic Anderson  model with fractional Gaussian   noises}



\author{Shuhui Liu}
\address{School of Mathematics, Shandong University, Jinan, Shandong 250100, China}
\address{Department of Applied Mathematics, The Hong Kong Polytechnic University, HungHom, Kowloon, Hong Kong}
\email{shuhuiliusdu@gmail.com}
\thanks{SL was supported by the China Scholarship Council.}

\author{Yaozhong Hu}
\address{Department of Mathematical and Statistical Sciences, University of Alberta, Edmonton, AB T6G 2G1, Canada}
\email{yaozhong@ualberta.ca}
\thanks{YH was supported by the NSERC discovery fund and a startup fund of University of Alberta.}

\author{Xiong Wang*}
\address{Department of Mathematics, Johns Hopkins University, Baltimore, 21218, USA}
\email{xiongwang@jhu.edu}
\thanks{*Corresponding author: xiongwang@jhu.edu}


\date{}


\subjclass[2010]{Primary 60H15; secondary 60H05, 60H07, 26D15}
\keywords{Parabolic Anderson model,    fractional Gaussian  noise, 
	chaos expansion,   solvability,   necessary  and sufficient condition,   H\"older-Young-Brascamp-Lieb  inequality,   Hardy-Littlewood-Sobolev inequality}

\begin{abstract}
This paper attempts  to obtain necessary and sufficient conditions  to solve   the parabolic Anderson model with fractional Gaussian noises:  $\frac{\partial}{\partial t}u(t,x)=\frac{1}{2}\Delta u(t,x)+u(t,x)\dot{W}(t,x)$, where $ {W}(t,x)$ is the  fractional Brownian field with temporal Hurst parameter  $H_0\in [1/2, 1) $  and spatial Hurst parameters $H$ $ =(H_1, \cdots, H_d)$ $ \in (0, 1)^d$,  and $\dot{W}(t,x)=\frac{\partial ^{d+1}}{\partial t \partial x_1 \cdots \partial x_d}W(t,x)$. When $d=1$  and     when  $(H_0,H)\in(\frac 12,1)\times(\frac 1{20},\frac 12)$ we show that  the condition  $2H_0+H>5/2$  is necessary and sufficient to ensure the existence of a unique solution for the parabolic Anderson Model.  When $d\ge 2$,   we find the necessary and sufficient condition on the Hurst parameters so that each chaos of the solution candidate is square integrable.
\end{abstract}

\maketitle

\section{Introduction  and main results}

In this work, we study the solvability (i.e., existence and uniqueness) problem of the following stochastic heat equation on 
the Euclidean space $\RR^d$, also known as the parabolic Anderson model (PAM):
	\begin{equation}\label{PAM}
		\left\{\begin{aligned}
			&\frac{\partial}{\partial t}u(t,x) =\frac{1}{2}\Delta u(t,x)+u(t,x)\dot{W}(t,x)\,, ~t>0, {x\in\RR^d} \,;\\
			& u(0,x) =u_0(x)\,,
		\end{aligned}\right.
	\end{equation}
	where $\De=\sum_{i=1}^d \frac{\partial ^2}{\partial x_i^2}$ is the Laplacian on $\RR^d$, the process $\{W(t,x)\,, t\ge 0\,, x\in \RR^d\}$ is a centered fractional Gaussian field of Hurst parameters $(H_0, H_1,\cdots, H_d)$
	and $\dot W(t,x)=\frac{\partial ^{1+d}}{\partial t \partial x_1 \cdots \partial x_d} W(t,x)$.  This means that formally the mean of $\dot W(t,x)$   is zero  and its   covariance function  is    given    by
	\begin{equation}\label{Def.Cov_W}
		\text{Cov} (\dot W(t,x),\dot W(s,y))=\gamma_0(s-t)\gamma(x-y) 
	\end{equation}
with 
	\begin{equation}\label{Def.Cov_frac}
		\gamma_0(t)=c_{H_0}|t|^{2H_0-2}\,,\quad \gamma(x)=\prod_{j=1}^d c_{H_j }|x_j|^{2H_j-2} \,,\quad t\in \RR,
		x=(x_1, \cdots, x_d)\in \RR^d
	\end{equation}
	where $c_{H_j }=H_j (2H_j -1)$ for $j=0,\cdots, d$.
Throughout this work we assume that  the Hurst parameters $(H_0, H_1, \cdots, H_d)$ satisfy 
	\begin{align*}
		{ H_0\in[1/2,1)\,,\text{ and }H_j\in(0,1)  \quad \forall j=1,\cdots,d.}
	\end{align*}
	
The case $H_0 =1/2$ corresponds to  
  $\gamma_0(t)=\delta(t) $ and in this case the noise is called time white; the case $H_1=\cdots=H_d=1/2$ corresponds to  
  $\gamma (x)=\delta(x) $ and in this case the noise is called space white.  When the noise is space time white  
  the square integrable  solution exists only when 
  the space dimension is one. In this case (and when the initial
  condition is the Dirac delta function)  
the seminal paper \cite{Hairer} connects the equation \eqref{PAM} with the Kardar-Parisi-Zhang (KPZ) equation via the  Hopf-Cole transformation  $h(t,x)=\log(u(t,x))$ and develops the theory of regularity structures. In the last decades, the parabolic Anderson model has received great attention partly due to its connection with the KPZ equation. Many sharp properties of the solution to \eqref{PAM} have been obtained for general Gaussian noise,  including
the space time white noise.
For further reading, we recommend the works of \cite{Balan, Chen2016, Corwin2020, HL2022, HWIntermittency, KKX2017, Tsai2022} and the references  therein. 

Equation
 \eqref{PAM} depends only on the initial condition and the covariance structure of noise $\dot W$. If we assume that the initial condition is as nice as needed (e.g. $u_0 \equiv 1$),  then the solvability and the  properties of the solution to  \eqref{PAM} are completely determined by   the covariance structure of $\dot W$. 
 It is interesting to ask under what conditions on the covariance structure of the noise,  the equation \eqref{PAM} has a unique solution. Several progresses 
 have been made along this direction, mostly in the form of   sufficient condition,
  which will be recalled in the next subsection. 
Now it is natural to ask if such condition is also necessary or not.  If not, it is interesting to find    conditions  that are  both necessary and sufficient.   In this paper
we shall present some partial results for the above problem.   

 Usually there are two different interpretations of  the product  $u(t,x)\dot W(t,x)$
		in \eqref{PAM} used so far in literature.  One is in the Stratonovich sense (or pathwise sense), and the other  one  is in the    It\^o/Skorohod  sense.  We chose the latter one {for \eqref{PAM}}  since 
		it enables us  to  immediately  express the formal  solution candidate through its It\^o-Wiener chaos expansion:
		\begin{equation}\label{Def.iwce}
			u(t,x)=\sum_{n=0}^{\infty}u_n(t,x)=\sum_{n=0}^{\infty} \frac{1}{n!} I_n(f_n(\cdot,t,x))\,, \quad {\text{ for any } (t,x)\in \RR_+\times\RR^d}
		\end{equation}
	where $f_n$ is  given by \eqref{e.3.3}  in the next section    through the heat kernel associated with 
\eqref{PAM} 	and $I_n(f_n(\cdot,t,x))$ is the multiple It\^o-Wiener integral
with respect to $ f_n(\cdot,t,x) $.  The advantage of using the chaos expansion 
lies in the fact that multiple   It\^o-Wiener integrals of different orders are orthogonal.   It is easy to verify 
	(e.g. \cite{CH2021})  that {\it the PAM  \eqref{PAM} has a unique square integrable solution if and only if the above chaos expansion is convergent in $L^2(\Omega)$  for all $(t, x)\in
	\RR_+\times \RR^d$}.  {When the chaos expansion is convergent, one has}
\begin{equation}\label{e.square}
	\EE \left[u(t,x)^2\right] = \sum_{n=0}^{\infty}   \EE \left[ {u_n(t,x)^2} \right]\,. 
\end{equation}	
		From the above discussion we see that  the solvability 
		problem of \eqref{PAM} is decomposed into the following two consecutive questions.		
\begin{enumerate}[leftmargin=8mm]
\item[{(Q1)}] {What are the necessary and sufficient conditions for the finiteness of $\EE \left[ u_n^2(t, x) \right]$
for all positive integer $n$? This involves the challenging 
problem of finding necessary and sufficient conditions  for the integrability of    singular multiple integrals presented in Proposition \ref{est_un} of Section \ref{s.spatial}. } 
\item[{(Q2)}] {
Under what  necessary and sufficient conditions 
Equation \eqref{e.square} is convergent? In Section \ref{Sec5.Conv}, we answer this  question in one dimensional case by deriving proper bounds for $\EE \left[ u_n^2(t, x)\right]$ as a function of $n$.}
\end{enumerate} 

\subsection{Main results} 

In the course of completing these two tasks,   we will see that the cases of Hurst parameters greater or less than  $3/4$ need to be treated differently.
This is a bit contrary to the conventional division which usually divides the region  of   
Hurst parameters into the region that  the Hurst parameter
is  greater   than  $1/2$
and region that  the Hurst parameter  is  less    than  $1/2$.  For this reason,  and  without loss of generality, we assume that the Hurst parameters are so arranged   that there is an integer $d_*$  between $0$ and $d$ such that
   $H_k<\frac34$ for $k=1,2,\cdots,d_*$ and $H_k\ge \frac34$ for $k=d_*+1,\cdots,d$.  The case $d_*=0$ means that 
   all Hurst parameters are   greater than or equal to 
   $3/4$ and $d_*=d$ means that 
   all Hurst parameters are   less  than  
   $3/4$. We also denote by $|H|$ the sum of all spatial  Hurst parameters, by $ H_*$ the sum of all Hurst parameters that are less than $3/4$, and by $ H^*$ the sum of all Hurst parameters that are greater than or equal to  $3/4$.  Namely, we denote
\begin{equation}\label{sum_H}
	\begin{cases}
		H_1, \cdots,  H_{d_*}<3/4,\quad H_{d_*+1}, \cdots, H_d\ge 3/4\,,\\
		H_*:=H_1+\cdots+H_{d_*},\ \ \ H^*:= H_{d_*+1} +\cdots+
		H_d\,,\\
		d^*=d-d_*\,,\quad |H|:=H_1+\cdots+H_d.\\
	\end{cases}
\end{equation}

Concerning the  above first question (Q1), {we have the following result.}  
\begin{Thm}\label{main_result1}
	Suppose $u_0\equiv1$ and $H_0\ge\frac12$.
	Let $u_n$ be the $n$-th chaos candidate of the solution to \eqref{PAM}, defined by \eqref{e:3.3}-\eqref{e.3.3} in the next section.  Then 
	$\EE[u_n(t,x)^2]<+\infty$ {{for any $(t,x)\in\RR_+\times\RR^d$}} and all $n\ge  1$ if and only if {all of the following conditions hold:}{
		\begin{subequations}\label{cond.0<H<1}
			\begin{align}[left={\empheqlbrace}]
				&H_*>\frac34 d_*-1,\label{cond.0<H<1_a}\\
				& |H|+2H_0>d,\label{cond.0<H<1_b}\\
				&H_*+2H_0>\frac34 d_*+\frac12,\label{cond.0<H<1_c}\\
				&|H|+2H_*+4H_0>d+\frac32 d_*.\label{cond.0<H<1_d}
			\end{align}
	\end{subequations}}
\end{Thm}
The initial condition $u_0\equiv1$ may be extended to any initial condition that is uniformly bounded below and bounded above.  
Using the It\^o isometry $\EE[u_n(t,x)^2] $ is expressed as a multiple integral  involving the spatial and temporal variables. Bounding this multiple integral  by integrating the space variables
will yield a multiple integral  of the form 
\eqref{u_n_est} involving  multiple  integral  of powers of some singular kernels.
This multiple integral appears elementary. However, determining the necessary and sufficient conditions for the exponents to ensure the integral's finiteness is a highly complex problem. 
There are some studies   about similar integrals in analysis
(e.g., \cite{Shi, Wu, Zhou}).  However, they seem bard to  be applied to
the above case.  We shall divide the   integral \eqref{Mainterm_d=2} associated with   $\EE \left[ u_n(t,x)^2 \right]$ into two distinct cases determined  by a new mechanism \eqref{Algo}. Then,  the  proof relies on  applications of both Hardy-Littlewood-Sobolev and  H\"older-Young-Brascamp-Lieb inequalities. 
The detailed proof will be given in Section  \ref{s.proof}. 


{If $H_k<3/4$ for all   $k=1,\cdots,d$, then we have} $|H|=H_*$ and $d=d_*$. The conditions \eqref{cond.0<H<1_a}-\eqref{cond.0<H<1_d} in \eqref{cond.0<H<1} can be slightly simplified to:
\begin{corollary}  If $u_0\equiv1$, $H_0\ge\frac12$ and $H_k<3/4$ for all   $k=1,\cdots,d$,
	then the necessary and sufficient condition  so that  $\EE[u_n(t,x)^2]<+\infty$ {for any $(t,x)\in\RR_+\times\RR^d$} and  all $n\ge   1$ becomes
	\begin{equation}\label{cond.H<3/4}
		\begin{cases}
			&H_*>\frac34 d-1,\\
			&H_*+2H_0>(d\vee \frac{3d+2}{4}),\\
			&3H_*+4H_0>\frac52 d.
		\end{cases}
	\end{equation}
\end{corollary}

When $d=1$, $H_0\ge\frac12$ and $H=H_1<\frac12$, the first condition \eqref{cond.0<H<1_a}  is trivial. The second and the last are implied by the third. Thus, we reduce Theorem \ref{main_result1}  to the following theorem. 
	\begin{Thm}\label{c.4.2}  
	If  $d=1$, $u_0\equiv1$ and $H_0\ge\frac12$, $0<H<\frac12$,    then  the necessary and sufficient condition so that   $\EE[u_n(t,x)^2]<+\infty$ {for any $(t,x)\in\RR_+\times\RR^d$} and all $n\ge  1$ is
	\begin{equation}\label{Ineq:Key_Cond}
	2H_0+H>5/4\,. 
\end{equation}	 
\end{Thm}

\begin{example}\label{example.1.4} 
It is easy to verify that when $d=3$, $H_0=1$,  $H_1=H_2=H_3=1/2$,  the condition 
\eqref{cond.H<3/4} is satisfied. Thus, if  $d=3$ and when the noise is time independent and
space white, then $\EE[u_n(t,x)^2]<\infty$ {for any $(t,x)\in\RR_+\times\RR^d$} and all $n$ (which is known in \cite[Theorem 4.1]{Hu2002}).  
However, also   in \cite[Theorem 4.1]{Hu2002} it is  shown that  in this case 
    $\sum_{n=0}^\infty  \EE[u_n(t,x)^2]=\infty$  {for any $(t,x)\in\RR_+\times\RR^d$} 
    if the initial condition
    $u_0(x)\ge c $ for some constant $c>0$.
This  stresses the point   that $ \EE[u_n(t,x)^2]<\infty$ for all $n$ 
does not    automatically   imply  $\sum_{n=0}^\infty  \EE[u_n(t,x)^2]<\infty$.  
\end{example}

The following results are byproducts while proving Theorem \ref{main_result1}.

\begin{proposition}\label{main_cor4}
	Suppose $u_0\equiv1$, $H_0\ge\frac12$ and suppose {all the conditions} in \eqref{cond.0<H<1} hold.
	\begin{enumerate}
		\item[(i)]   If      $H_0+H_*>\frac{3d_*}{4} $  and if $|H|>d-1$,   then
		the solution is in   $L^p(\Omega)$ for any $p\ge 1$.
		Moreover, there are two positive constants $C_{1, H}$ and $c_{2,H}$, independent of $p$,   $t$ and $x$  such that 
		\begin{equation}
			\begin{split}
				\EE\lt[|u(t,x)|^p\rt]\le  &C_{1, H} \exp
				\left[ c_{2, H} p^{\frac{|H|-d+2} {|H|-d+1}}  t  ^{\frac{|H|+2H_0-d}{|H|-d+1}}\right]
			\end{split}
		\end{equation}
		{for all $(t,x)\in \RR_+\times \RR^d$}.
		\item[(ii)] If      $H_0+H_*>\frac{3d_*}{4} $   and if $|H|=d-1$,  then there is a $t_0>0$ such that
		when $t<t_0$, $\EE[u(t,x)^2]=\sum_{n=0}^\infty \EE[u_n(t,x)^2]<+\infty$ for $x\in\RR^d$.
	\end{enumerate}
\end{proposition}
{The critical time $t_0$ in Proposition \ref{main_cor4} (ii) is usually called  the blow-up time, see for example \cite{Hu2002}. In \cite[(1.9)]{CDOT2021}, the authors show that $t_0=t_0(p)=\frac{C_{H_0,H}}{(p-1)^{2H_0-1}}$ under the critical condition \cite[(1.8)]{CDOT2021}. Quastel, Ramirez, and Virag \cite{QRV2024} use an elementary version of the Skorokhod integral to define the solution at all times, including $t > t_0$. They construct $u$ as a randomized shift or as the free energy of an undirected polymer in a random environment. }  

When $H_0=1/2$, {the first condition \eqref{cond.0<H<1_a} and the fourth condition \eqref{cond.0<H<1_d}} are entailed by the second condition \eqref{cond.0<H<1_b} and third one \eqref{cond.0<H<1_c}. Therefore, combining Theorem \ref{main_result1} and Proposition \ref{main_cor4}, we have the following proposition.
\begin{proposition}\label{main_prop5}
	Suppose $u_0\equiv1$ and $H_0=\frac12$, namely, the noise is white in time. Then the necessary and sufficient condition for $\EE[u(t,x)^2]<+\infty$ {for any $(t,x)\in\RR_+\times\RR^d$}
	is
	\begin{equation}\label{H0=1/2}
		|H|>d-1\quad\hbox{ and }\quad
		H_*>\frac34d_*-\frac12.
	\end{equation}
\end{proposition}
This means that when the noise is white in time, the convergence of the It\^o-Wiener chaos expansion is equivalent to the finiteness of each chaos.

{In general the above second question (Q2) is more difficult to answer since we need more precise bound of $\EE \left[ u_n(t,x)^2\right]$ in terms of $n$ so that the series \eqref{e.square}  is summable 
(Namely  $\sum_{n=0}^\infty \EE \left[ u_n(t,x)^2\right]<\infty$). Proposition \ref{main_cor4}   provides a sufficient condition for the convergence of the It\^o-Wiener chaos expansion \eqref{e.2.series}.  It is interesting to know if this condition is necessary or not. Unfortunately,  as we shall see in this manuscript,  it is not necessary. Next,  we enlarge the known range of Hurst parameters for which the It\^o-Wiener chaos expansion converges.   We shall focus   on $d=1$ 
since  the problem in  general dimension  case is much more difficult.   } 


It is known  (from  subsection \ref{Subsec1.2})   that when $d=1$ and $H_0>1/2$,
a sufficient condition for  the  convergence of the
It\^o-Wiener chaos expansion is $H_0+H_1>3/4$ and a necessary condition
is $2H_0+H_1>5/4$.  These two conditions do not coincide
so we don't know which one of them is both necessary and sufficient, 
or none  of them is.  It is natural to seek a condition that is both necessary and sufficient.  This problem seems hard. First, let us point out that the condition \eqref{cond.0<H<1}  may not be sufficient for the convergence of the It\^o-Wiener chaos expansion.  In fact, assuming the initial condition $u_0(x)=1$ and $d=3$, it is proved in  \cite{Hu2002} (see also Example \ref{example.1.4})  that if  the noise is time independent ($H_0=1$) and space white ($H_1=H_2=H_3=1/2$) then  $\EE \left[u_n(t,x)^2\right]<\infty$  for each $n$ (the conditions in Theorem \ref{main_result1} are satisfied),  but
$\sum_{n=0}^\infty \EE \left[u_n(t,x)^2\right]=\infty$  for all  $(t, x)\in \RR_+\times \RR^3$.  The second main result of {this paper} is  to establish that the existing necessary condition ($2H_0+H_1>5/4$) for the convergence of the It\^o-Wiener chaos expansion is also sufficient under some circumstances.

%
%
%

 \begin{figure}[htbp]
 	\centering
 	\includegraphics[width=0.8\textwidth]{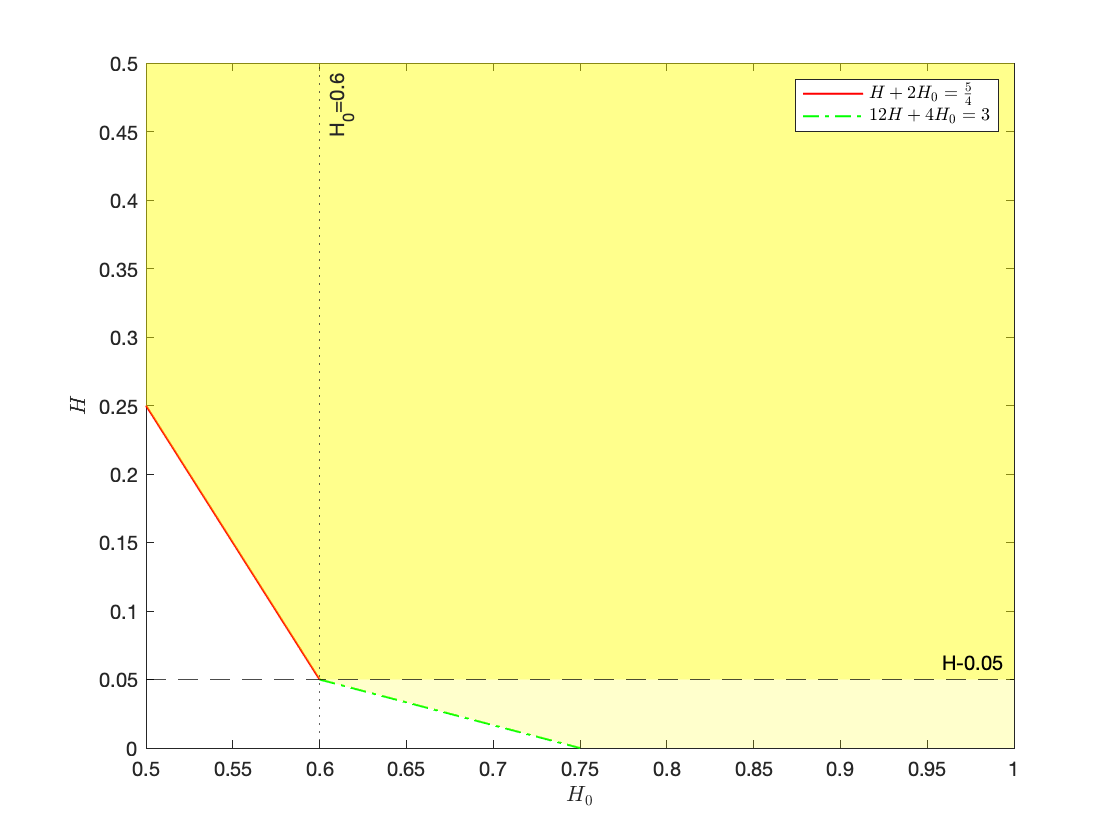}
 	\caption{The dark yellow region is for region  $\mathcal{A}_1$ and the light yellow region is for region  $\mathcal{A}_2$}
 	\label{Fig.Area1+2}
 \end{figure}
%

%


\begin{Thm}\label{thm_conver}
 Let $u(t,x)$ be the solution candidate given by  \eqref{Def.iwce}.
Suppose $d=1$, $u_0\equiv1$ and $H_0>\frac12$,  $H=H_1<\frac12$. If $(H_0, H )\in \mathcal{A}_1\cup\mathcal{A}_2$, where
\begin{align}
 \mathcal{A}_1=&\lt\{(H_0, H )\in(1/2,1)\times(1/20,1/4): \  2H_0+H>5/4 \rt\}\,, \label{area_1} \\
\mathcal{A}_2=&\lt\{(H_0, H )\in(1/2,1)\times(0,1/20): \  4H_0+12H>3 \rt\}\,, \label{area_2}
\end{align}
then {for any $(t,x)\in\RR_+\times\RR^d$}
\begin{align}\label{Fin_Result}
\EE[u(t,x)^2]=\sum\limits_{n=1}^{\infty}\EE[u_n(t,x)^2]<+\infty.
\end{align}
\end{Thm}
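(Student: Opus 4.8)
The plan is to deduce \eqref{Fin_Result} from a sharp estimate of the multiple time integral $\cI(\rho,\gamma)$ of \eqref{Mainterm_d=2}, combined with Lemma \ref{est_un}. Since $d=1$ and $H<1/2$ we have $d_*=1$, $d^*=0$, $H_*=|H|=H$ and $\beta^*=0$, hence $\tfrac12 d-|H|+\tfrac12\beta^*=\tfrac12-H>0$, so Lemma \ref{est_un}(1) gives $\EE[|u_n(t,x)|^2]\lesssim t^{n(H+2H_0-1)}\,n!\sum_{\bm\alpha\in\cD_n}\cI(\rho,\gamma)$, where $\gamma=2-2H_0$ and, for a given $\bm\alpha$, the exponents are $\rho_i\in\{\tfrac34-H,\tfrac{1-H}{2},\tfrac14\}$ for $i<n$ according as $\alpha_i\in\{0,\tfrac12-H,1-2H\}$. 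If $H_0+H>\tfrac34$, equivalently $\max_i\rho_i<H_0$, the nested Hardy--Littlewood--Sobolev inequality already yields $\EE[|u_n|^2]\lesssim C^n(n!)^{-H}t^{n(H+2H_0-1)}$ (this is exactly Corollary \ref{main_cor4}, using $|H|=H>d-1=0$), which is summable; so I may assume $H_0+H\le\tfrac34$ (hence $H<\tfrac14$), and then the only indices for which HLS fails are those with $\alpha_i=0$, where $\rho_i=\tfrac34-H\ge H_0$ (note $\tfrac{1-H}{2}<H_0$ because $2H_0+H>1$). By the constraints defining $\cD_n$ these ``bad'' indices are isolated, and if $q$ entries of $\bm\alpha$ equal $1-2H$ then exactly $m:=q+1$ of them vanish while $n-1-2q$ equal $\tfrac12-H$.

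For a fixed $\bm\alpha$ I would integrate out the variables in a prescribed order, peeling off each bad level together with one adjacent good level as a $2$-block --- or, when $H$ is very small, as a slightly larger block --- exactly as in the alternative proof of sufficiency, but \emph{retaining} the resulting positive power $(\bar s)^{\eta}(\bar r)^{\eta}$ of the surviving boundary variables instead of bounding it by a constant; here $\eta$ grows linearly in the number of blocks already removed, and integrating each block produces, via Lemma \ref{lem_mainterm} (or Theorem \ref{Thm.HYBL_N}), a product of Beta functions. The conditions that make these block estimates valid, and that keep the accumulated powers $\eta$ positive, are precisely $H+2H_0>\tfrac54$ when $H\in(\tfrac1{20},\tfrac14)$ and $4H_0+12H>3$ when $H\in(0,\tfrac1{20})$. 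Once all $m$ bad levels are removed the residual integral is a nested integral over a simplex in the remaining $n'$ variables, all of whose exponents are $<H_0$ and which now also carries the accumulated weights; the Hardy--Littlewood--Sobolev inequality applied to it produces a $\Gamma$-factor, and the Beta functions collected while peeling produce, after multiplication, a further factorially small factor. Putting these together gives a bound of the form $\cI(\rho,\gamma)\le C^n(n!)^{-(1+\delta)}$ for every $\bm\alpha\in\cD_n$, with $\delta=\delta(H_0,H)>0$ controlled by the gap in the above inequalities. Since $|\cD_n|\le 4^n$, summing over $\bm\alpha$ and then over $n$ gives $\sum_n\EE[|u_n(t,x)|^2]\lesssim\sum_n C_t^{\,n}(n!)^{-\delta}<\infty$, which is \eqref{Fin_Result}.

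The main obstacle is the last estimate: one must check that the product of (i) the $\Gamma$-factor from the HLS step on the residual simplex, (ii) the Beta functions from peeling the bad blocks, and (iii) the accumulated positive powers, actually beats the $n!$ produced by Lemma \ref{est_un}, \emph{uniformly over the whole combinatorial family} $\cD_n$ --- and in particular in the extremal $\bm\alpha$ where bad levels are as numerous as possible (so that only about $n/2$ levels remain for the HLS step) and where each bad level's good neighbour has the largest admissible exponent $\tfrac{1-H}{2}$. Optimizing the block sizes and the free parameters in Lemma \ref{lem_mainterm} against these worst cases is what forces the threshold to degrade from $2H_0+H>\tfrac54$ to $4H_0+12H>3$ on the strip $H<\tfrac1{20}$, and carrying out that optimization --- rather than merely bounding each block integral by a constant as in Section \ref{sufficient} --- is the crux of the argument.
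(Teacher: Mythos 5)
There is a genuine gap. The entire weight of your argument rests on the claim that, after peeling off the ``bad'' levels in blocks and applying Hardy--Littlewood--Sobolev to the residual simplex, one obtains $\cI(\rho,\gamma)\le C^n (n!)^{-(1+\delta)}$ uniformly over $\bm\alpha\in\cD_n$ --- but you never establish this; you yourself flag it as ``the main obstacle'' and ``the crux of the argument'' and leave it as an optimization to be carried out. This is precisely the estimate that is not available by the block-peeling route: the paper's Section \ref{sufficient} and its alternative proof only ever obtain $\cI(\rho,\gamma)\le C^n$ from Lemma \ref{lem_mainterm} and the HYBL dimension conditions, which together with the $n!$ prefactor of Lemma \ref{est_un} gives $\EE[|u_n|^2]\lesssim C^n n!\, t^{n(H+2H_0-1)}$ and hence no summability; the authors state explicitly that they could not extract the sharp $n$-dependence from that decomposition. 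Asserting that the Beta constants from Lemma \ref{lem_mainterm} and the retained positive powers ``produce a further factorially small factor,'' and that the thresholds $2H_0+H>\tfrac54$ and $4H_0+12H>3$ fall out of optimizing block sizes, is not a proof: no accounting of the accumulated exponents, no worst-case analysis over $\cD_n$, and no derivation of those two inequalities is given.

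The paper's actual proof takes a different route that bypasses $\cD_n$ entirely. It returns to the Fourier representation of $\|\tfrac{1}{n!}\sum_\sigma f_{n,\sigma}\|^2_{\HH^{\otimes n}}$, splits the spatial weights $\prod_j|\eta_j|^{(1-2H)\pi_j}$ and the temporal kernel $\prod_i|s_i-r_i|^{-\gamma}$ by H\"older's inequality with parameters $(p,q)$, $(k,k')$, $(m,m')$, and bounds the two resulting factors separately: $\cJ_1$ by $C^n$ via the nonhomogeneous H\"older--Young--Brascamp--Lieb inequality (which tolerates exponents $\ge H_0$), and $\cJ_2$ by $C^n(n!)^{-2/q+1/(2q)+(k'/2)(1-2H)}$ via Hardy--Littlewood--Sobolev together with Stirling's formula. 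The factorial gain thus comes entirely from the HLS factor, and the regions $\cA_1$, $\cA_2$ arise from the feasibility of choosing $(p,q,k,k',m,m')$ so that $1-\tfrac2q+\tfrac1{2q}+\tfrac{k'}2(1-2H)<0$ simultaneously with the HYBL and HLS integrability constraints (condition \eqref{condi_k}). If you want to salvage your plan, you would need to actually produce the uniform factorial bound on $\sum_{\bm\alpha}\cI(\rho,\gamma)$, including the extremal configurations where roughly half the levels are bad; as it stands the proposal does not prove the theorem.
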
 
Compared with Theorem \ref{main_result1}, the proof of the above 
theorem needs   some sharp uniform bounds  of $\EE[u_n(t,x)^2]$ {for any $(t,x)\in\RR_+\times\RR^d$} as $n\to  \infty$.  The technique in the proof of Theorem \ref{main_result1} is insufficient. The details of the proof are given 
 in Section \ref{Sec5.Conv}.

  In Figure \ref{Fig.Area1+2}, the domain \eqref{area_1} is shaded in light yellow and domain \eqref{area_2} is shaded in dark yellow. 
{This combined with Theorem \ref{thm_conver} implies immediately the following corollary.}
\begin{corollary} 
Suppose $d=1$, $u_0\equiv1$. On the region $(H_0, H )\in(1/2,3/5)\times(1/20,1/4)$ the condition $2H_0+H>5/4$ in \eqref{Ineq:Key_Cond}
	serves as a necessary and sufficient condition for the solvability of \eqref{PAM}.
\end{corollary}    

The condition $4H_0+12H>3 $  can be written as
\[
2H_0+H>\frac14 +\frac53H_0 \,,
\]
which is implied by $2H_0+H>\frac54$  and  $H_0\le 3/5$.
Similarly, $4H_0+12H>3$ is also implied by
$2H_0+H>\frac54$  and  $H\ge 1/20$.
This combined with the above corollary   implies   immediately    that
\begin{corollary}
	Suppose  $d=1$.  If $(H_0,H)\in(\frac 12,1)\times(\frac 1{20},\frac 12)$ or $(H_0,H)\in(\frac 12,\frac 35)\times(0,\frac 12)$,  then the necessary and sufficient condition for the It\^o-Wiener chaos expansion \eqref{e.2.series} to converge (namely for Equation   \eqref{PAM} to be   solvable) 
	is $2H_0+H>\frac54$.
\end{corollary}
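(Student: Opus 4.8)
The plan is to read this corollary off from the three results already established --- Corollary~\ref{c.4.2}, Corollary~\ref{main_cor4} and Theorem~\ref{thm_conver} --- so that no new estimate is needed; the substantive analytic work all lies in Theorem~\ref{thm_conver}, and here the only task is to reconcile parameter regions. Throughout I would use the simplification that, since $0<H<\frac12<\frac34$, in the notation of Section~\ref{s.spatial} we have $d=d_*=1$ and $H_*=|H|=H$, so that by Corollary~\ref{c.4.2} the entire system \eqref{cond.0<H<1} collapses to the single inequality $2H_0+H>\frac54$.

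The \emph{necessity} direction is immediate: if the chaos expansion \eqref{e.2.series} converges in $L^2(\Omega)$, then each of its summands is finite, i.e.\ $\EE[u_n(t,x)^2]<\infty$ for every $n>1$, and Corollary~\ref{c.4.2} then forces $2H_0+H>\frac54$. This step uses only $d=1$, $H_0\ge\frac12$ and $0<H<\frac12$, not the precise shape of the two regions.

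For \emph{sufficiency} I would assume $2H_0+H>\frac54$ and that $(H_0,H)$ lies in one of the two stated regions (so $H_0\in(\frac12,1)$, $0<H<\frac12$, and \eqref{cond.0<H<1} holds), and split according to whether $H\ge\frac14$. If $H\ge\frac14$, then automatically $H_0+H>\frac12+\frac14=\frac34=\frac34 d_*$ and $|H|=H>0=d-1$, so Corollary~\ref{main_cor4}(i) applies and yields $u(t,x)\in L^p(\Omega)$ for every $p\ge1$, in particular \eqref{Fin_Result}. If $H<\frac14$, I would check that $(H_0,H)\in\mathcal{A}_1$: in the first region $H>\frac1{20}$ by definition, while in the second region $H_0<\frac35$ together with $2H_0+H>\frac54$ forces $H>\frac54-2H_0>\frac1{20}$; in either case $\frac1{20}<H<\frac14$, and since also $H_0\in(\frac12,1)$ and $2H_0+H>\frac54$, this means $(H_0,H)\in\mathcal{A}_1$, so Theorem~\ref{thm_conver} gives \eqref{Fin_Result}. (Note that under these hypotheses the region $\mathcal{A}_2$ of Theorem~\ref{thm_conver} is never actually reached, precisely because $H>\frac1{20}$ always holds here.)

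There is no genuine analytic obstacle: the corollary is a packaging of earlier results, and the only point demanding care is the elementary region bookkeeping just indicated --- confirming that the cases $H\ge\frac14$ and $\frac1{20}<H<\frac14$ exhaust the hypotheses and are matched, respectively, to Corollary~\ref{main_cor4}(i) and to $\mathcal{A}_1$ in Theorem~\ref{thm_conver}, together with the (already proved) input that $2H_0+H>\frac54$ is equivalent to finiteness of each individual chaos.
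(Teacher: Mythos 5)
Your proof is correct and follows essentially the same route as the paper: necessity is read off from Corollary \ref{c.4.2}, and sufficiency is pure region bookkeeping reducing to Theorem \ref{thm_conver} for $H<\frac14$ and to the known $H_0+H>\frac34$ result (Corollary \ref{main_cor4}(i)) for $H\ge\frac14$. The only cosmetic difference is that the paper phrases the bookkeeping as ``$2H_0+H>\frac54$ implies $4H_0+12H>3$ in each region'' so that $\mathcal{A}_1\cup\mathcal{A}_2$ is hit, whereas you verify membership in $\mathcal{A}_1$ directly and correctly observe that $\mathcal{A}_2$ is never actually needed under the stated hypotheses.
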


 \subsection{Comparison with existing results}\label{Subsec1.2}

%
%

The solvability of equation \eqref{PAM}   depends completely on the covariance kernels $\ga_0(\cdot)$ and $\ga(\cdot)$ when the initial condition is given and is assumed to be nice.  When  the temporal kernel $\gamma_0(\cdot)=\delta_0(\cdot)$
is the Dirac delta function, and  when the spatial kernel is positive and   $\gamma(x)=\int_{\RR^d} e^{ix\xi }\mu(d\xi)$, $x\in \RR^d$,    is represented by the Fourier transform of a positive measure $\mu$ on $\RR^d$ satisfying   
\begin{equation}\label{Dalang}
	{ \int_{\RR^d}\frac{\mu(d\xi)}{1+|\xi|^2}<\infty \quad \hbox{(Dalang's condition)}\,, }
\end{equation}
the equation \eqref{PAM} is solvable  (e.g., \cite{Dalang}). 
In the literature to obtain the necessary and sufficient 
condition,  one usually takes the temporal kernel as   $\gamma_0(t)=c_{\alpha_0}|t|^{-\alpha_0}$  
with  some $\al_0>0$ and with some constant  $c_{\alpha_0}\in \RR$ 
(one identifies the Dirac delta function case as $\al_0=1)$. As for  the spatial covariance function $\gamma(\cdot)$, three   cases are commonly studied: (i)
Riesz kernel, i.e., $\gamma(x)=c_{\alpha,d}|x|^{-\alpha}  $ or equivalently $\mu(d\xi)=C_{\alpha,d}|\xi|^{-(d-\alpha)}\prod_{j=1}^d d\xi_j$ for some $ \alpha \in  \RR$ and some positive constants $c_{\alpha,d},C_{\alpha,d}\in \RR$; (ii) fractional kernel, i.e., $\gamma(x)=\prod_{j=1}^d c_{\alpha_j}|x_j|^{-\alpha_j}$ or $\mu(d\xi)=\prod_{j=1}^d C_{\alpha_j}|\xi_j|^{-(1-\alpha_j)}d\xi_j $ for some $ \alpha_j>0$ and some positive constants $c_{\alpha_j},C_{\alpha_j}\in \RR$.  

For the fractional noise, the relation between $\al_i$ and the Hurst parameters is given by 
$\al_i=2-2H_i$, $i=0, 1, \cdots, d$.  In the above  particular cases of Riesz or fractional kernels, 
 Dalang's condition becomes $\al_0=1$ and  
\begin{equation}\label{Dalang2}
	\begin{cases}
		0<\alpha<2\,,  & \text{Riesz kernel noise};\\
	\al_1, \cdots, \al_d \in (0, 1)\,,\ 	\sum_{j=1}^d \al_j <2\,, & \text{fractional noise}\,. 
	\end{cases}	
	\end{equation} 
  When $\gamma_0(\cdot)$ is locally   integrable, the Dalang's condition \eqref{Dalang} is also proved 
	(e.g. \cite{HHNT}) to be sufficient for the solvability of \eqref{PAM}  in the general   Gaussian noise
	setting. Moreover, when $\ga_0(t)=c_\alpha |t|^{-\alpha_0}$ and   the spatial covariance $\gamma(\cdot)$ is given by the Riesz potential, the necessity of \eqref{Dalang} has been verified in \cite{BC2014}. Notice that in terms of Hurst parameters,  the second one in  Dalang's condition \eqref{Dalang2}   becomes $H_1, \cdots, H_d\in (1/2, 1)\,,  \ \sum_{j=1}^d H_j>d-1$.   As a comparison, we draw attention to another random evolution model,  the hyperbolic Anderson model (HAM) (i.e.,  we replace $\frac{\partial}{\partial t}$ by $\frac{\partial^2}{\partial t^2}$ in \eqref{PAM}). It is known that Dalang's condition \eqref{Dalang} is a sufficient condition for the solvability of   HAM. The readers are referred to \cite{Balan} and references therein for details. During the preparation of this article, Chen, Deya, Song, and Tindel in \cite{CDST2021} obtain that
	\begin{equation}\label{CDST_Condi}
\int_{\RR^d}  \frac{1}{1+|\xi|^{3-\alpha_0}}   \mu(d\xi)<\infty
\end{equation}
is the necessary and sufficient condition for HAM to admit a unique Skorohod solution when $\alpha_0\in(0,1)$. Moreover, in the case of Riesz kernel or (regular) fractional noise, \eqref{CDST_Condi} is equivalent to $\alpha_0+\alpha<3$ or $\alpha_0+\sum_{j=1}^d \alpha_j<3$.

Let us emphasize that for the above mentioned Gaussian noises,
	it is critical to assume all $\al_j\in (0, 1) $, and the solvability under this condition is 
	now clear. Conventionally, the case that  all the Hurst parameters exceed $1/2$ (namely,  $\al_j\in (0, 1)$
	for all $j=1, \cdots, d$)  is referred to   the \emph{regular} case. 
Otherwise (namely,    $\al_j>1$  for some  $j=1, \cdots, d$), 
it is termed \emph{rough} case. {The \emph{rough case}} is more intriguing and poses significantly greater challenges due to the fact that the spatial covariance function $\gamma(\cdot)$ is no longer locally integrable and is no longer positive. 
	 
	 There are very limited results in the case when time is rough, i.e., $\al_0>1$ (or equivalently $H_0<1/2$).   In the case of fractional noise, 
	 let us mention the work   \cite{HN09}, where {it is proved that all chaoses} of the solution candidate to \eqref{PAM} exist in $L^2(\Omega)$ when $d=1$, $H_1=1/2$ and $3/8<H_0<1/2$.  This result is further strengthened by   Chen, where  
	 the domain of solvability   \cite[(1.19)]{Ch2} is established as ($d=1$):  
		\begin{equation}\label{Eq:Chen_Cond}
			\begin{cases}
				H_0\geq \frac 12 \text{ and } H_1< \frac 12 : \text{ solvable if } H_0+H_1 > \frac{3}{4} \,; \\
				H_0 < \frac 12 \text{ and } H_1 \geq \frac 12 : \text{ solvable if } 4H_0+H_1 > 2 \,; \\
				H_0< \frac 12 \text{ and } H_1< \frac 12 : \text{ solvable if } 2H_0+H_1 > \frac{5}{4} \,.
			\end{cases}
		\end{equation}
		Notice that when $H_1=1/2$, the above second condition 
		is exactly  $ H_0>3/8$.
		Furthermore,  if the noise {is more smooth in the spatial variable},  then $H_0$ can be arbitrarily
		between $0$ and $1$  (see, e.g., \cite{CHKN18, HL17}). 
		
		When $H_0\ge 1/2$, and when some of the spatial Hurst parameters $(H_1,\cdots,H_d)$ are less than $1/2$  while others are greater than $1/2$,  the solvability problem becomes much more complex.
%
   Some notable achievements have been made  
when $H_0=\frac 12$  (i.e., the noise is white in time) and $d=1$.  In this case, it is known that $H_1>1/4$ is necessary and sufficient for the parabolic Anderson model (or hyperbolic Anderson model) to be solvable. 
We refer to  the works of \cite{BJQ2015,BJQ2017,HHLNT2,HHLNT1,HW2019,HLW2021} and references therein
for detailed discussions.  

For the more general case when the noise is rough in space, the best results 
up to date seem to be in  \cite{CH2021}, where both sufficient and necessary conditions are obtained for some special cases.  More specifically, 
it is proved   
\begin{enumerate}
	\item[(i)]  When $H_0=1/2$, namely when the noise is time  white,   the necessary and sufficient condition for the solvability of \eqref{PAM} (e.g., \cite[Theorem 1]{CH2021}, see also our previous Proposition \ref{main_prop5})  is  $|H|:=\sum_{i=1}^d H_i>d-1$  and $H_*>\frac34d_*-\frac12$.  Notice  that the condition $H_*>\frac{3}{4}d_*-\frac{1}{2}$, which is missing in \cite[Theorem 1]{CH2021},  is addressed  in    Section \ref{s.proof} of the present paper.  In fact,  when $d=1$ the above condition
	 becomes $H:=H_1>1/4$. 
	
	\item[(ii)] If $H_0>1/2$, then  a sufficient condition for \eqref{PAM} to be solvable (e.g. 
	 \cite[Theorem 3 (i)]{CH2021}) is
	\begin{equation}
		\begin{cases}
			H+H_0 >\frac{3}{4} &\qquad \hbox{if $d=1$}\,, \\
			|H|>d-1 &\qquad \hbox{if $d\ge 2$}\,. \\
		\end{cases}
		\label{e.sufficient}
	\end{equation}
	\item[(iii)] If $H_0>1/2$,    then  a necessary
	condition for \eqref{PAM} to be solvable (e.g.  \cite[Theorem 3 (ii)]{CH2021})  is as follows
	\begin{equation}
		\begin{cases}
			H +2H_0>\frac{5}{4}  &\qquad \hbox{if $d=1$}\,, \\
			|H|+2H_0 >\frac{3d+2}{4} &\qquad \hbox{if $d\ge 2$}\,. \\
		\end{cases}
		\label{e.necessary}
	\end{equation}
\end{enumerate}   
 On the other hand, when $H_0=1/2$ and $d>1$,  it is pointed out in \cite{CH2021} that for \eqref{PAM} to be solvable, the number of  Hurst parameters $(H_1,\cdots,H_d)$ that are less than $1/2$ can be at most $1$.



One notices that the sufficient condition \eqref{e.sufficient}
	and the necessary condition \eqref{e.necessary} are different, and this poses an intriguing open question: In situations where $H_0>\frac{1}{2}$, is \eqref{e.sufficient} a necessary condition, or is \eqref{e.necessary} sufficient? Put differently, is it possible to find a condition that is both necessary and sufficient?  
This current research is driven by the pursuit of an answer 
 to this problem. 
More specifically, our first main result, as stated in Theorem \ref{main_result1},  
gives a necessary and sufficient condition ensuring that $\EE \left [u_n(t,x)^2\right]<\infty$ holds for all values of $n$ and {$(t,x)\in\RR_+\times\RR^d$}. Here, $u_n$ denotes the $n$-th chaos expansion of the solution candidate to \eqref{PAM}.  
To obtain the necessary and sufficient condition for  \eqref{e.square} to be convergent, we focus on   $d=1$.   Our second primary result, Theorem \ref{thm_conver} asserts that the condition $H+2H_0>\frac{5}{4}$ specified in the first line of  \eqref{e.necessary} serves as both necessary and sufficient criterion for the solvability of \eqref{PAM} in two pieces of domains: when $H_0<\frac{3}{5}$ or when $H>\frac{1}{20}$. 
We believe that the above additional condition  $H_0<\frac{3}{5}$ or   $H>\frac{1}{20}$ 
is due to some technicality and   conjecture that the condition \eqref{Ineq:Key_Cond} is both necessary and sufficient for \eqref{PAM} to be solvable in the case of $d=1$ and $H\leq \frac{1}{2}$. 

In the literature, one usually uses the powerful Hardy-Littlewood-Sobolev inequality 
to obtain a sufficient condition to ensure that  \eqref{e.square} is convergent. 
However, our results demonstrate that the application of this inequality alone 
cannot give a condition that is both necessary and sufficient. We need to combine 
both the Hardy-Littlewood-Sobolev and the  H\"older-Young-Brascamp-Lieb inequalities 
to arrive at our main results (Theorems \ref{main_result1} and \ref{thm_conver}). 
See Figure \ref{Fig.Area1+2} for an explanation.
We hope this methodology sheds light on the search of a condition that is both necessary and sufficient.

\begin{figure}[htbp]
	\centering
	\includegraphics[width=0.8\textwidth]{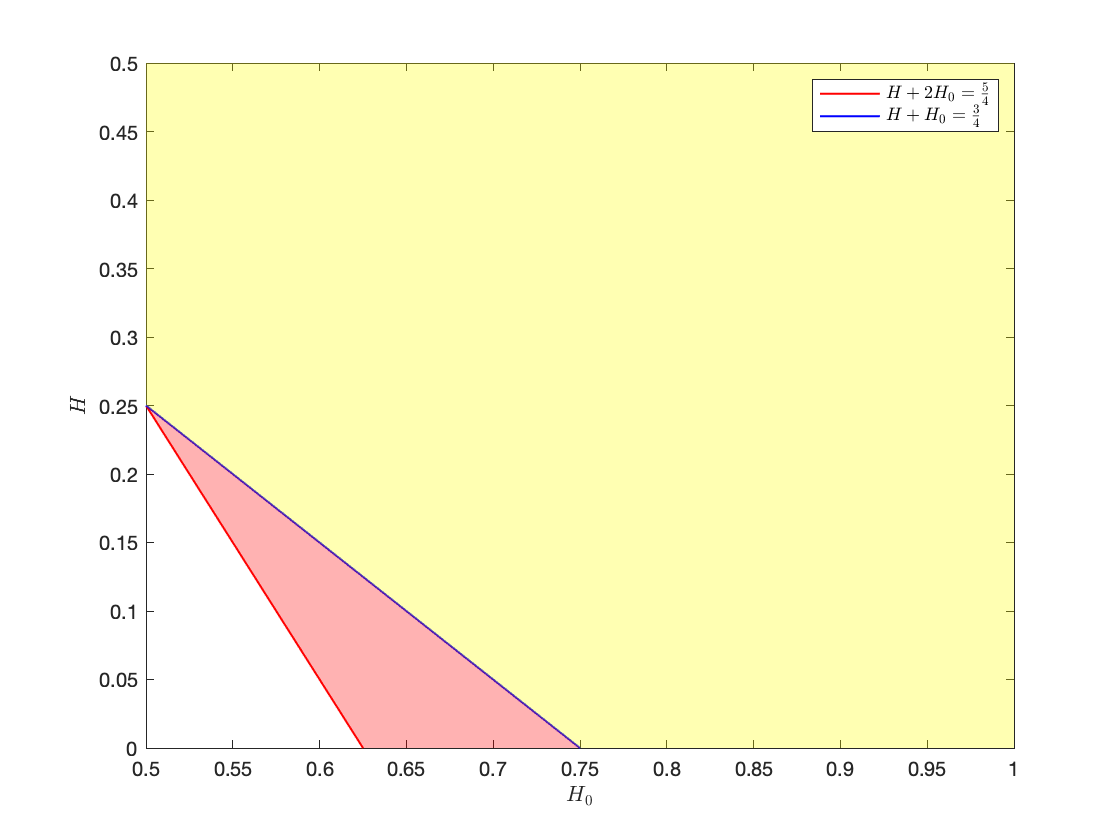}
	\caption{H\"older-Young-Brascamp-Lieb   region (red) and Hardy-Littlewood-Sobolev region  (yellow)}
	\label{Fig.BLArea}
\end{figure}

Let us point out that  
there is a necessary and sufficient condition (\cite[Theorem 1.1]{ams19_huliutindel}) for the solvability
of the stochastic heat equation
with additive noise: $\frac{\partial}{\partial t}u_\add (t,x) =\frac{1}{2}\Delta u_\add (t,x)+ \dot{W}(t,x)$, for $(t,x)\in\RR_+\times\RR^d$.
When the noise is fractional as in our case here, the condition
becomes (\cite[Equation (5.2)]{ams19_huliutindel}) 
	\[
	2H_0+\sum_{i=1}^d H_i>d\,,
	\]
which is exactly \eqref{cond.0<H<1_b}. 	 In terms of our notation we see
that $\EE [u_\add^2  (t,x)]=\EE[u_1^2(t,x)]$ for all $(t,x)\in\RR_+\times\RR^d$.  Hence, Theorem \ref{main_result1}
means that  $\EE [u_\add^2  (t,x)]=\EE[u_1^2(t,x)]<\infty$ 
does not automatically imply  $ \EE[u_n(t,x)^2]<\infty$ for all $n$ and $(t,x)\in\RR_+\times\RR^d$.

\subsection{Structure of the paper}

After some preparations in Section \ref{s.pre}, we aim to bound $\EE\left  [u_n(t,x)^2\right]$ which amounts to compute {some complicated multiple integrals} with both space  and time variables. In Section \ref{s.spatial}, we use the Fourier transforms to compute the spatial integral and then reduce the calculation of $\EE\left  [u_n(t,x)^2\right]$ to multiple integrals on the simplex on time variables. We deal with the latter one carefully to obtain the necessary and sufficient condition so that $\EE \left  [u_n(t,x)^2\right]<\infty$ in Section \ref{s.proof}. Section \ref{Sec5.Conv} focuses on the solvability of \eqref{PAM} in dimension one, {the well-accepted condition $H+H_0>\frac 34$ is improved to $H+2H_0>\frac 54$}, which is shown to be necessary and sufficient in some regions as we discussed earlier.

In order to make the paper more readable, we delay some of the detailed computations to the appendix,  where  we also recall  briefly the Hardy-Littlewood-Sobolev inequality and the H\"older-Young-Brascamp-Lieb inequality since they are the main tools in this work.

\section{Preliminaries}\label{s.pre}
In this section, we begin by introducing the notations and facts that will be used throughout this paper. Let   $\cC_0^{\infty}(\RR_+\times\RR^d)$  be the space of infinitely differentiable functions with compact support on $\RR_+\times\RR^d$.
%
Given that we will be dealing with the fractional Brownian noise whose Hurst
parameters can be greater than $1/2$ for some coordinates while being less than $1/2$
for others, and since we are only concerned with the
parabolic Anderson model,  it is more convenient for us to introduce the
scalar product by  using the Fourier transform with respect to the
spatial variables.

We denote the Fourier transform $\widehat f$ with respect to the $d$ spatial variables $x_1, \cdots, x_d$ as follows:
\[
\widehat  f(\xi) :=\cF[f](\xi)=\int_{\RR^d} e^{-\iota \xi x} f(x) dx,\ \ \hbox{ where } \iota=\sqrt{-1}.
\]
Let $H_0\in (1/2, 1)$ and $H_1, \cdots, H_d\in (0, 1)$.
Also, let us denote $\gamma_0(r )=\ga_{H_0}(r):=
H_0(2H_0-1)|r |^{2H_0-2}\,,  r\in \RR  $.
 We introduce a scalar product on  $\cC_0^{\infty}(\RR_+\times\RR^d)$    as defined below:
 \begin{equation}
\langle \varphi, \psi\rangle_\HH=  \int_{\RR_{+}^2\times \RR^{d}}\hat{\varphi}(r,\bm{\xi})\overline{\hat{\psi}}(s,\bm{\xi})\prod_{k=1}^d|\xi_k|^{1-2H_k}\gamma_0(r-s)drdsd\bm{\xi},
\label{eq.def2_H}
\end{equation}
where  $\bm{\xi}:=(\xi_{1}, \cdots, \xi_{d})$, $d\bm{\xi}:=d\xi_{1}\cdots d\xi_{d}$.  We denote $\HH$ as the Hilbert space obtained from the completion of
$\cC_0^{\infty}(\RR_+\times\RR^d)$ with respect to the scalar product
$\langle \cdot, \cdot\rangle_\HH$.   

The noise in the present paper is given by an isonormal Gaussian process $W=\{W(\varphi);\varphi\in \HH\}$ with
covariance
$$\EE[W(\varphi)W(\psi)]=\langle \varphi, \psi\rangle_\HH.$$
It is routine  to prove that {the function
$\1_{[0, t)\times \prod_{k=1}^d [0, x_k]}$} belongs to $\HH$
for any $t>0$ and $x=(x_1, \cdots,x_d)\in \RR^d$
(here $\1_{[b, a]}=-\1_{[a, b]}$ if $a<b$). {We denote}
$W(t,x):=W( \1_{[0, t)\times \prod_{k=1}^d [0, x_k]})$.   The Gaussian
random field  $\{W(t,x):t\geq 0, x\in\RR^d\}$ has    mean zero and    covariance   given by
\begin{align}\label{cov}
	\EE[W(t,x)W(s,y)]=&\ C_{H_0, H } \ga_{H_0}(t,s)\prod_{k=1}^d\ga_{H_k}(x_k,y_k)\,,\quad s,t\geq 0\,,x,y\in\RR^d\,,
\end{align}
where $C_{H_0, H }$ is a constant that depends on $H_0$ and  $H=(H_1, \cdots, H_d)$. It should be noticed that this constant $C_{H_0, H}$ may differ from those used in other 
literature, as we have set the constant in \eqref{eq.def2_H} to be $1$.

Since the Gaussian field $W$ is not a martingale in time (due to $H_0\not=1/2$), we cannot use the classical method of martingale measure to {define the stochastic integral}. Therefore, we shall use the chaos expansion to deal with our problem, and the most effective way to do this  is  to  introduce  the stochastic integral via {multiple chaos}   It\^o-Wiener integrals (see \cite{hubook,hurecent}).

Let $e_1, \cdots, e_n, \cdots\in   \cC_0^{\infty}(\RR_+\times\RR^d)$
be an orthogonal basis of $\HH$. Then
$\{\tilde e_n=W(e_n), n=1, 2, \cdots\}$ are independent
standard normal random variables.   We denote the symmetric tensor product by $\otimes $.   The tensor product $\HH^{\otimes n}$ is completion of the linear span of   $e_{\ell _1} 
\otimes  \cdots \otimes  e_{\ell_n} $ 
with respect to the scalar product
generated by 
\[
\langle e_{\ell _1} 
\otimes  \cdots \otimes  e_{\ell_n} \,, e_{j _1} 
\otimes  \cdots \otimes  e_{j_n} \rangle =\frac{1}{n!}
\sum_{\si \in \Sigma_n}
\langle e_{\ell _1}   \,, e_{j _{\si(1)} } 
 \rangle \cdots \langle e_{\ell _n}   \,, e_{j _{\si(n)} } 
 \rangle 
\]
where $\Sigma_n$ denotes the set of all permutations of   $\{1, \cdots, n\}$.  
 Let {$\tilde H_m(x)=(-1)^me^{\frac{x^2}{2} }\frac{d^m}{dx^m}(e^{-\frac{x^2}{2}})$}  be the
$m$-th Hermite polynomial.  Let $e_{i _1} 
,  \cdots , e_{i_k}$ be different and $n_1, \cdots, n_k$ are positive integers  such that $n_1+\cdots+n_k=n$.  
We define
the multiple integral as follows:
\[
 I_n(e_{i_1}^{\otimes
n_1}\otimes \cdots \otimes e_{i_k}^{\otimes
n_k} )={\tilde H_{n_1}(\tilde e_{i_1})\cdots \tilde H_{n_k}(\tilde e_{i_k})}\,.
\]
Any element in $   \HH^{\otimes n}$ can be approximated by
$f_n=\sum\limits_{0\leq n_1,\cdots,n_k\leq n} a_{i_1, \cdots, i_k}e_{i_1}^{\otimes
n_1}\otimes \cdots \otimes e_{i_k}^{\otimes
n_k} $, whose multiple It\^o-Wiener integral is defined as:
\begin{align*}
	I_n(f_n)=&\sum_{0\leq n_1,\cdots,n_k\leq n} a_{i_1, \cdots, i_k}
I_n(e_{i_1}^{\otimes
n_1}\otimes \cdots \otimes e_{i_k}^{\otimes
n_k}) \\
=&{\sum_{0\leq n_1,\cdots,n_k\leq n} a_{i_1, \cdots, i_k} \tilde H_{n_1}(\tilde e_{i_1})\cdots \tilde H_{n_k}(\tilde e_{i_k})} \,.
\end{align*}
%
%
%
%
%

{The Wiener chaos expansion theorem (e.g., \cite{hubook, Nualartbook})} states that  any random variable $F\in L^2(\Omega,\cF,\PP)$ admits a chaos expansion
\begin{equation}\label{WienerC}
{F=\EE[F]+\sum_{n=1}^{\infty}I_n(f_n),}
\end{equation}
where the series converges in $L^2(\Omega)$, and the elements $f_n\in \HH^{\otimes n}$ ($n\geq 1$) depend on $F$, and
\begin{equation}\label{eq_isome}
\EE[|F|^2]=(\EE [F])^2+\sum_{n=1}^{\infty}\EE[|I_n(f_n)|^2]
 =(\EE [F])^2+\sum_{n=1}^{\infty}n!\| {f}_n\|_{\HH^{\otimes n}}\,. 
\end{equation}
 
Let us recall  that 
$\HH$ is a space of (possibly generalized) functions with $d+1$ variables.
  $f_n\in \HH^{\otimes n}$ is  a 
  (possibly generalized) function  with $(d+1)\times n$ variables 
and   can be approximated by smooth functions with compact support from $(\RR_+ \times \RR^{ d})^n $ to $\RR$ with respect to the norm of $\HH^{\otimes n}$.   Additionally, the multiple integral $I_n(f_n)$ is identified  as follows:
\[
I_n(f_n)=\int_{\RR_+^n\times \RR^{nd}}
f_n(t_1, x_1, \cdots, t_n, x_n) W(dt_1, dx_1)\cdots  W(dt_n, dx_n)\,.
\]
The Fourier transform of $f$ with respect to the spatial variables is defined as:
\[
\hat f _n (t_1, \xi_1, \cdots, t_n, \xi_n)=
\int_{\RR^{nd} } f_n (t_1, x_1, \cdots, t_n, x_n)e^{-\iota \sum_{i=1}^n \xi_i x_i}
dx_1\cdots dx_n\,,
\]
where $\xi_i x_i=\sum_{k=1}^d \xi_{ik}x_{ik}$ is the Euclidean product of
$\xi_i=(\xi_{i1}, \cdots, \xi_{id})^T$,  $x_i =
(x_{i1}, \cdots, x_{id})^T$ and $dx_i=dx_{i1}\cdots d x_{id}$.
Using the above notation, we can express the $\HH^{\otimes n}$ norm of $f$ as:
\beq
\|f_n \|_{\HH^{\otimes n}}^2
&=&\int_{\RR_+^{2n}\times \RR^{nd}}
 \hat{f_n }(r_1,\xi_1, \cdots, r_n, \xi_n)
\bar {\hat{f_n}}(s_1,\xi_1, \cdots, s_n, \xi_n)\nonumber\\
&&\quad \prod_{i=1}^n \prod_{k=1}^d|\xi_{ik}|^{1-2H_k}\prod_{i=1}^n \gamma_0(r_i-s_i)dr_1\cdots dr_n ds_1\cdots ds_nd\bm{\xi}\,.\label{Eq:2.5}
\eeq
Thus, any square integrable nonlinear functional of $W$   can be written
as
\[
F=\sum_{n=0}^\infty I_n(f_n)=\sum_{n=0}^\infty\int_{\RR_+^n\times \RR^{nd}}
f_n(t_1, x_1, \cdots, t_n, x_n)  W(dt_1, dx_1)\cdots   W(dt_n, dx_n)\,,
\]
for some sequence $f_n\in \HH^{\otimes n}$. The expectation of $F^2$ can be expressed as:
\begin{align*}
	\EE [F^2]   &= {C_{H_0,H}^n}\sum_{n=0}^\infty n!  \int_{\RR_+^{2n}\times \RR^{nd}}
 \hat{f_n}(r_1,\xi_1, \cdots, r_n, \xi_n)
\bar {\hat{f_n }}(s_1,\xi_1, \cdots, s_n, \xi_n)\nonumber\\
&\qquad\qquad \times \prod_{i=1}^n \prod_{k=1}^d|\xi_{ik} |^{1-2H_k}\prod_{i=1}^n \gamma_0(r_i-s_i)dr_1\cdots dr_n ds_1\cdots ds_nd\bm{\xi}\,.
\end{align*}
 Consider a random field $f(t, x)$ defined on $\mathbb{R}_+ \times \mathbb{R}^d \times \Omega$, where $f$ is square integrable with $\mathbb{E}[f(t, x)^2] < \infty$.  The field $f(t, x)$ can be expressed as a chaos expansion:
\[
f(t,x)=f_0(t,x)+\sum_{n=1}^\infty I_n (f_n(t,x))\,,
\]
where {$f_n(t,x,\cdot)( t_1, x_1, \cdots, t_n, x_n)=f_n(t,x;t_1, x_1, \cdots, t_n, x_n)$} is an element in $\HH^{\otimes n} $ for any {fixed} $t$ and  $x$. The symmetrization of $f_n$, denoted as $\tilde{f}_n$, is defined as
\ba
\tilde{ f_n}(t_1, x_1 , \cdots, t_{n+1}, x_{n+1})
  = &{ \frac{1}{(n+1)! }\sum_{\si }  f_n(t_{\si(1)}, x_{\si(1)}, t_{\si(2)}, x_{\si(2)}, 
\cdots , t_{\si( i )}, x_{\si(i )}} \\
& \qquad\qquad\qquad  { t_{\si( i+1)},  x_{\si( i+1)}, \cdots,
t_{\si( n+1)}, x_{\si( n+1)})}  \nonumber
\end{align}
where the summation is taken over all permutation $\sigma$ 
on $\{ 1,\cdots,n+1 \}$. 

\begin{definition}\label{d.2.1}
{For such $f$ as above, $t\in\RR_+$ and $x\in\RR^d$ we say that $f$ is integrable} if, for every $n\ge 0$,
$\tilde{f_n} \in \HH^{\otimes (n+1)}$ and the series 
$\sum_{n=0}^\infty  I_{n+1}(\tilde {f_n})$  converges in $L^2(\Om)$. We define the stochastic integral of $f(t,x)$ as
\begin{equation}
\int_{\RR_+\times \RR^d} f(t,x)  W(dt,dx)=
\sum_{n=0}^\infty  I_{n+1}(\tilde {f_n})\,.
\end{equation}
\end{definition}

\bigskip

We now define the concept of a strong (random field) solution to equation \eqref{PAM}.\begin{definition}\label{def-sol-sigma}
A  real-valued adapted   stochastic process $u=\{u(t,x),t\geq 0, x \in \mathbb{R}^d
\}$ is said to be a  \emph{ (global) random field  solution (or mild solution)}   of \eqref{PAM} if {for all $t\in[0, \infty)$ and $x\in\mathbb{R}^d$},
$\{G_{t-\cdot}(x-\cdot) u(\cdot,\cdot)\}   $ is integrable and the following equality holds almost surely:
\begin{equation}\label{e.3.1}
u(t,x)= \int_{\RR^d} G_t(x,y) u_0(y)dy + \int_0^t \int_{\mathbb{R}^d}G_{t-s}(x-y) u(s,y)   W( ds, dy) ,
\end{equation}
where $G_t(x)=(2\pi t)^{-d/2}
\exp\left( -\frac{|x|^2}{2t}\right)$ is the heat kernel, and the stochastic integral is understood in the sense of Definition \ref{d.2.1}.  If equation \eqref{e.3.1} holds up to some time instant $t<t_0$ for a positive $t_0$, then we say that equation \eqref{PAM} has a local random field solution.
\smallskip

%
%
%
\end{definition}

If $u(t,x)$ satisfies equation \eqref{e.3.1},  then $u(s,y)$ has a similar representation.
By substituting this expression into \eqref{e.3.1}, we derive a new equation for $u$. Carrying this procedure  repeatedly,
 we observe that {if $u$ is a strong solution of \eqref{PAM}}, for each integer $N \geq 1$, we have
 \begin{equation}\label{e:3.2a}
u(t,x)=\sum_{n=0}^{N}  u_n(t,x) +\mathfrak{R}_{N}(t, x),
\end{equation}
 where
\begin{equation} \label{e:3.3}
u_n(t,x)= I_n(\tilde{f_n}  (t,x))
\end{equation}
and $\mathfrak{R}_{N}(t, x)= I_{N+1} (g_{N} (t,x))$. 
The element $\tilde{f_n}(t,x)$ is the symmetric extension with respect to $(s_1,x_1), \cdots, (s_n, x_n)$ of the product of Heat kernels
\begin{eqnarray}\label{e.3.3}
 & &\prod_{i=0}^n G_{s_{n+1}-s_{n}}(x_{n+1}-x_n)\nonumber \\
 &=&G_{t-s_n}(x-x_n)G_{s_n-s_{n-1}}(x_ n-x_{n-1})\cdots G_{s_2-s_1}(x_ 2-x_1)
 G_{s_1}u_0(x_1) \,,
\end{eqnarray}
where $0=s_0<s_1<s_2<\cdots<s_n<s_{n+1}=t$. 
More precisely, 
\begin{align}\label{e.3.3}
\tilde{f_n}(t,x) &:= \tilde{f_n}(t,x; s_1,x_1,\dots,s_n,x_n ) \\
&= \frac{1}{n!}\sum_{\sigma} \prod_{i=0}^n G_{s_{\sigma(i+1)}-s_{\sigma(i)}}(x_{\sigma(i+1)}-x_{\sigma(i)})\1_{\{0=s_0<s_{\sigma(1)}<\cdots<s_{\sigma(n)}<s_{n+1}=t\}} \,.\nonumber
\end{align}
{The summation above is taken over all permutation $\sigma$ 
on $\{ 1,\cdots,n \}$, which can be identified as a permutation  on $\{0,1,\cdots,n,n+1\}$ such that $\sigma(0)=0$ and $\sigma(n+1)=n+1$.} 
The function $g_{N} (t,x)$ is given by
\begin{eqnarray*}
g_{N} (t,x)= G_{t-s_{N+1}}(x-x_{N+1})G_{s_{N}-s_{N}}(x_ {N+1}-x_{N})\cdots G_{s_2-s_1}(x_ 2-x_1) u (s_1, x_1)\,.
\end{eqnarray*} 
It is evident that $\mathfrak{R}_{N}(t, x)$ is orthogonal to any multiple integral, with a deterministic kernel, of order less than or equal to $N$. {Therefore, if $u(t,x)$ belongs to $L^2(\Omega)$ for all $(t,x)\in\mathbb{R}_+\times \mathbb{R}^d$}, it must admit the chaos expansion {(e.g., \cite{hubook, Nualartbook})}:
\begin{equation}\label{e.2.series}
u(t,x) = \sum_{n=0}^\infty  u_n(t,x) = \sum_{n=0}^\infty  I_n(\tilde{f_n}(t,x)),
\end{equation}
where $\tilde{f_n}(t,x)$ represents the symmetric extension in \eqref{e.3.3}. Conversely, if the series \eqref{e.2.series} converges in $L^2$, it can be easily verified that $\{G_{t-\cdot}(x-\cdot) u(\cdot,\cdot)\}$ is integrable and \eqref{e.3.1} holds
true, establishing $u$ as a mild solution of \eqref{PAM}. Therefore, to study the solvability of \eqref{PAM} we need only to investigate the 
mean square convergence of the series \eqref{e.2.series}.

Throughout the paper, the notation $A \lesssim B$ (respectively $A \gtrsim B$ and $A \simeq B$) indicates the existence of strict positive universal constants $C_1$ and $C_2$, such that $A\le C_1B$ (respectively $A\ge C_2B$ and $C_2 B\leq A\leq C_1 B$). {We write constants $c_a$ and $C_b$ that depend only on $a$ and $b$ and write constants $c$ and $C$ for absolute constants. Their values may change from line to line.}

\section{Spatial Integration}\label{s.spatial}
To compute the $n$-th chaos $\EE [u_n(t,x)^2]$, we shall use the Fourier transform for the Hilbert scalar product \eqref{eq.def2_H}. In this section, we aim to find sharp bounds for the integral with respect to $d\bm{\xi}$ that appears in $\EE[u_n(t,x)^2]$. To simplify our notation, we introduce $\vec{\mathbf{s}}_j := (s_j, \cdots, s_n)$ for $1 \leq j \leq n$, and $\vec{\mathbf{s}} := \vec{\mathbf{s}}_1 = (s_1, \cdots, s_n)$. Conventionally, we will use
\begin{equation}\label{dsr}
 \begin{cases}
  d\vec{\bfs}_j:=ds_jds_{j+1}\cdots ds_n, \ \forall~ 1\le j\leq n,\\
  d\vec{\bfr}_j:=dr_jdr_{j+1}\cdots dr_n, \ \forall~ 1\le j\leq n,
 \end{cases}
\end{equation}
as well as $d\vec{\mathbf{s}} = d\vec{\mathbf{s}}_1$, and $d\vec{\mathbf{r}} = d\vec{\mathbf{r}}_1$. {Moreover, {for any permutation $\sigma$ on $\{j, j+1, \cdots, n\}$ for $1 \leq j \leq n$}, we denote
\begin{equation}\label{Tsr_sig}
	\{\vec{\bfs}_j\in\bT_t^{\sigma}\}:=\{(s_j,s_{j+1},\cdots,s_n):0=s_0<s_{\sigma(j)}<\cdots<s_{\sigma(n)}<s_{n+1}=t\}\,,
\end{equation}
and $\{\vbfs\in \bT_t^{\sigma}\}:=\{\vbfs_1\in \bT_t^{\sigma}\}$.
If $\sigma$ is the natural permutation, then $\bT_t^{\sigma}$ is abbreviated as $\bT_t$. This is,
\begin{equation}\label{Tsr}
\{\vbfs_j\in\bT_t\}:=\{(s_j,s_{j+1},\cdots,s_n):0=s_0<s_j<\cdots<s_n<s_{n+1}=t\}\,,
\end{equation}
and  $\{\vbfs \in\bT_t\}:=\{\vbfs_1 \in\bT_t\}$. }

{ 
It is easy to see that the Fourier transform of the symmetric function $f_n(t,x)$ in \eqref{e.3.3} with respect to  spatial variables $x_1,x_2,\cdots,x_n$  is given by
\begin{equation*}
	\widehat f_n(t,x; s_1, \xi_1, \cdots, s_n, \xi_n)=\frac{1}{n!}\sum_{\sigma}\widehat f_{n,\sigma}^{(t,x)}(s_1, \xi_1, \cdots, s_n, \xi_n)\1_{\{0<s_{\sigma(1)}<\cdots<s_{\sigma(n)}<t\}}
\end{equation*}
where
\begin{equation}
\widehat f_{n,\sigma}^{(t,x)}(s_1, \xi_1, \cdots, s_n, \xi_n)
=\prod_{i=1}^n  e^{-\frac{1}{2} (s_{\si(i+1)}-s_{\si(i)})
|\xi_{\si(i)}+\cdots+\xi_{\si(1)}|^2} e^{-  \iota  x(\xi_n+\cdots+\xi_1)}\,.
\label{e.3.4a}
\end{equation}
{The readers can observe that symmetrization enables us to apply classical results such as \eqref{Eq:2.5}. However, the calculations and estimations related to time variables become significantly intricate.}

We assume $u_0\equiv1$ on $\mathbb{R}^d$ throughout the remaining part of this paper. 
For $n\geq 1$,  {$\vbfs\in\bT_t^{\varsigma}$ and $\vbfr\in \bT_t^{\sigma}$},  {where $\varsigma$ and $\si$ are two
permutations of $\{1, 2, \cdots, n\}$,} we can use equation \eqref{eq.def2_H}, the definition of the tensor product norm and \eqref{e.3.4a} to obtain that{
\begin{align}
 \EE &[ u_n(t,x)^2 ] \nonumber \\
 =& \frac{1}{n!} \sum_{\varsigma,\si} \int_{{0<s_1,\cdots, s_n<t\atop 0< r_1, \cdots, r_n<t}}\int_{\RR^{nd}}\prod_{i=1}^n e^{- \frac12 (s_{\varsigma(i+1)}-s_{\varsigma(i)} )|\sum\limits_{j=1}^i \xi_{\varsigma(j)}|^2 - \frac12 (r_{\si(i+1)}-r_{\si(i)})|\sum\limits_{j=1}^i \xi_{\sigma(j)}|^2  }\nonumber\\
 &\qquad\qquad\qquad\qquad\qquad\quad
 \prod_{i=1}^n\prod_{k=1}^d
|\xi_{ik}|^{1-2H_k}d\xi_i
 \times \prod_{i=1}^n \gamma_0(s_i-r_i) d\vec{\bfs} d\vec{\bfr}\nonumber\\
 =&\sum_{\sigma} \int_{{0<s_1<\cdots<s_n<t\atop 0< r_1, \cdots, r_n<t}
}\int_{\RR^{nd}}
\prod_{i=1}^n e^{- \frac12 (s_{i+1}-s_i )|\sum\limits_{j=1}^i \xi_{j}|^2 - \frac12 (r_{\si(i+1)}-r_{\si(i)})
 |\sum\limits_{j=1}^i \xi_{\sigma(j)}|^2  }\nonumber\\
&\qquad\qquad\qquad\qquad\qquad\quad
 \prod_{i=1}^n\prod_{k=1}^d
|\xi_{ik}|^{1-2H_k}d\xi_i
 \times \prod_{i=1}^n \gamma_0(s_i-r_i) d\vec{\bfs} d\vec{\bfr} \,, \label{Eq:3.5}
\end{align}
where the second equality follows by restricting $\{0<s_1,\cdots, s_n<t\}$ to $\{0<s_1<\cdots<s_n<t\}$ and by the symmetry. When {$\vbfs\in\bT_t$} we define $h_{k,n}(\vbfs)$ as:
  \begin{align}\label{hks}
h_{k,n}  (\vbfs ) :=& \int_{\RR^{n}}
\prod_{i=1}^n e^{-\frac 12(s_{i+1 }-s_i )|\xi_{ik}+\cdots+\xi_{1k} |^2}   \prod_{i=1}^n
|\xi_{ik}|^{1-2H_k}d\xi_{1k}\cdots d\xi_{nk}  \nonumber\\
=& \int_{\RR^{n}}
\prod_{i=1}^n e^{-\frac 12(s_{i+1}-s_i )|\eta_i |^2}   \prod_{i=1}^n
|\eta_{i}-\eta_{i-1} |^{1-2H_k}d\eta_i\,,
\end{align}
where the substitution $\eta_i:=\xi_{ik}+\cdots+\xi_{1k}$ is used in  \eqref{hks} and $\eta_0:=0$ by convention. Moreover, {for any permutation $\si$ of $\{1, 2, \cdots, n\}$}, we define similarly with the notation $\eta_i^{\sigma}=\xi_{\si(i)k}+\cdots+\xi_{\si(1)k}$:
\begin{align}
	{h_{k,n}  (\vec{\bfs},\vec{\bfr}^{\sigma})} =& \int_{\RR^{n}}
\prod_{i=1}^n e^{-\frac12(s_{i+1}-s_i )|\xi_{ik}+\cdots+\xi_{1k}|^2 -\frac12(r_{\si(i+1)}-r_{\si(i)})
 |\xi_{\si(i)k}+\cdots+\xi_{\si(1)k} |^2  }\nonumber\\
&\qquad\qquad\qquad\qquad\qquad\qquad\qquad\qquad \times
 \prod_{i=1}^n
|\xi_{ik}|^{1-2H_k} d\xi_{1k}\cdots d\xi_{nk} \label{hk2} \nonumber \\
=& \int_{\RR^{n}}
\prod_{i=1}^n e^{-\frac12(s_{i+1}-s_i )|\eta_i|^2 -\frac12(r_{\si(i+1)}-r_{\si(i)})
 |\eta_i^{\sigma}|^2  } \prod_{i=1}^n
|\eta_{i}-\eta_{i-1} |^{1-2H_k}d\eta_i\,. 
\end{align}
Then by applying H\"older's inequality, we obtain
\begin{align}
{h_{k,n}  (\vec{\bfs},\vec{\bfr}^{\sigma})} \le & C_H^n h_{k,n}^{1/2}(\vec{\bfs})    h_{k,n}^{1/2} (\vec{\bfr})\,,\quad \vec{\bfs},\vec{\bfr} \in \bT_t\,.\label{hk}
\end{align}
Furthermore, we can simplify \eqref{Eq:3.5} as
\begin{equation}
	\EE \left[ u_n(t,x)^2 \right] = \sum_{\sigma} \int_{{0<s_1<\cdots<s_n<t\atop 0< r_1, \cdots, r_n<t}}\prod_{k=1}^d {h_{k,n}  (\vec{\bfs},\vec{\bfr}^{\sigma})}\prod_{i=1}^n \gamma_0(s_i-r_i) d\vec{\bfs} d\vec{\bfr}\,.\label{e.4.1}
\end{equation}
}

In order to obtain a sharp bound for $h_{k,n}(\vbfs)$, we can rewrite equation \eqref{hks} as an expectation of normal variables. Let $X_0=0$ and let $\{X_1, \cdots, X_n\}$ be i.i.d. standard Gaussian random variables. Denote
  \begin{equation}\label{def_wi}
{ w_0=1 }\quad \hbox{ and } \quad w_i:=s_{i+1}-s_i  \quad  \hbox{for $1\leq i\leq  n$} .
  \end{equation}
Then for $0<s_1<\cdots < s_{n}<t$, {  for some positive constants $c_{H_k}$  depending on Hurst parameters $H_k$ we can reformulate $h_{k,n} (\vec{\bfs} )$ in \eqref{hks} as }
\begin{align}
h_{k,n} (\vec{\bfs} )
&=  c_{H_k}^n   \left( \prod_{i=1}^n w_i^{-1/2} \right) \EE\left[\prod_{i=1}^n
\left|\frac{X_i}{\sqrt{w_i}}-\frac{X_{i-1}}{\sqrt{w_{i-1}}}\right|^{1-2H_k}\right]\nonumber\\
& = c_{ H_k}^n    \left( \prod_{i=1}^n w_i^{-1/2} \right)
\left(   \prod_{i=1}^n    (w_iw_{i-1})^{H_k-1/2} \right)\nonumber\\
&\qquad\qquad\qquad\qquad\quad\times \EE\left[ \left|  X_1
 \right| ^{1-2H_k}  \prod_{i=2}^n
\left|  \sqrt{w_{i-1}}  X_i - \sqrt{w_i}  X_{i-1}  \right|^{1-2H_k} \right]
\nonumber\\
  &= c_{ H_k}^n   w_n^{H_k-1}
\left( \prod_{i=1}^{n-1} w_i^{2H_k-3/2} \right)
\left( \prod_{  i=2 }^{n }  (w_i+w_{i-1})^{\frac12- H_k} \right)  \nonumber\\
&\qquad  \times   \EE\left[ \left|  X_1  \right| ^{1-2H_k} \prod_{i=2}^n
\left| \sqrt{\frac{w_{i-1}}{w_{i-1}+w_i} }X_i - \sqrt{\frac{w_{i }}{w_{i-1}+w_i} }X_{i-1} \right|^{1-2H_k}
\right] \,. \label{e.4.3}
\end{align}
{Let us introduce the following notations to simplify the expectation in \eqref{e.4.3}. We set
\begin{equation}\label{lam}
\la_1=1  \hbox{ and } \lambda_i:=\sqrt{\frac{w_{i-1}}{w_{i-1}+w_i}} \in(0,1)\,, \quad i\geq 2\,,
\end{equation}
and
\begin{equation}\label{theta}
\theta_k :=2H_k-1\,, \quad \zeta_n:=\prod_{i=1}^{n}
\big|  {\lambda_i }X_i - \sqrt{1-\lambda_i^2 } \, X_{i-1} \big|^{- \theta_k }\,.
\end{equation}
Let us denote the expectation in \eqref{e.4.3} as $\mathfrak{I}_{k,n}(\lambda_1,\cdots,\lambda_n)$. In particular, we amend the expectation in \eqref{e.4.3} to
\begin{equation}\label{I_k}
\mathfrak{I}_{k,n}(\lambda_1,\cdots,\lambda_n):=\EE\left[ \zeta_n \right]
= \EE\left[ \prod_{i=1}^{n} \left| 
 {\lambda_i }X_i - \sqrt{1-\lambda_i^2 }X_{i-1} \right|^{1-2H_k}\right],
\end{equation}
with the conventions $\prod_{i= m}^n a_i :=1$ if $m>n$, and the conventions $\lambda_1=1$, $X_0=1$. 
Recall that our objective is to obtain a sharp bound for $h_{k,n}(\vbfs)$. To achieve this, it is adequate to establish a precise upper limit for $\mathfrak{I}_{k,n}(\lambda_1,\cdots,\lambda_n)$, as presented below.}

\begin{lemma}\label{lem_J_kn}
We have the following upper bounds for $\mathfrak{I}_{k,n}(\lambda_1,\cdots,\lambda_n)$.
\begin{enumerate}
\item[(1)] If $H_k<3/4$,  then there is a positive constant $C_{H_k}$ such that {for all $\la_1,\cdots,\la_n$}
\begin{equation}
\mathfrak{I}_{k,n}(\lambda_1,\cdots,\lambda_n) \le C_{H_k}^n\,.
\end{equation}
\item[(2)] If $H_k\ge 3/4$,  then for any $\be_k\in (4H_k-3,2H_k-1)$,  there is a positive constant $C_{\be_k}$ such that {for all $\la_1,\cdots,\la_n$}
\begin{equation}\label{e.ikn3}
\mathfrak{I}_{k,n}(\lambda_1,\cdots,\lambda_n) \le {C_{H_k, \be_k}^n} \prod_{i=2} ^n
\la_i^{-\beta_k} \,.
\end{equation}
\end{enumerate}
\end{lemma}

\begin{proof} We divide the proof into three steps based on the value of $H_k$: $(0, 1/2)$, $[1/2, 3/4)$, and $[3/4, 1)$.

\noindent \emph{{\bf Step 1:}  The case $0<H_k<1/2$, i.e., $\theta_k< 0$}.   This case has been proven in \cite{CH2021}. More precisely, we have
\begin{eqnarray}
\mathfrak{I}_{k,n}(\lambda_1,\cdots,\lambda_n)
 &\le & C_{H_k}\EE\left[ \left|  X_1
 \right| ^{-\theta_k} \left( \prod_{i=2}^{n}  \left(
 |  X_i | \vee |X_{i-1}| \right)^{-\theta_k} \right)
 \right] \nonumber\\
 &\le & C_{H_k}\EE\left[ \left|  X_1
 \right| ^{-\theta_k} \left( \prod_{i=2}^{n}  \left(
 |  X_i | + |X_{i-1}| \right)^{-\theta_k} \right)
 \right] \nonumber\\
 \nonumber \\
&\le&  C^n_{H_k} \,.    \label{e:4.4}
\end{eqnarray}

\smallskip
\noindent \emph{{\bf Step 2:}  The case $3/4 \leq H_k< 1$.}
 To bound $\mathfrak{I}_{k,n}(\lambda_1,\cdots,\lambda_n)$ in this case, we shall use the following estimation for {a standard Gaussian random variable} $X$ \ (see Lemma A.1 in \cite{HNS} for details): for any $0<\alpha <1$, $ \lambda >0$ and $b\in \RR$, there is a constant $C_{\alpha}>0$ independent of $\lambda$ and $b$ so that
 \begin{equation}\label{e:4.5}
 \EE \left[ | \lambda X +b |^{-\alpha} \right] \leq C_{\alpha}  (\lambda \vee |b| )^{-\alpha}\simeq (\lambda + |b |)^{-\alpha} .
 \end{equation}

{Denote by} $\EE^Y$ the expectation with respect to the random variable $Y$ while considering other random variables as constants.
Thus, it is clear that for $\theta_k\geq \frac 12$ ($\theta_k$ is defined in \eqref{theta})  and $\be_k\in (4H_k-3,2H_k-1)$
\begin{align} \label{e.ikn1}
\fI_{k,n}(\lambda_1,\cdots,\lambda_n)&=\EE\left[ \EE^{X_n}  \Big[  \big|  {\lambda_n }X_n - \sqrt{1-\lambda_n^2 } \, X_{n-1} \big|^{-\theta_k  }  \zeta_{n-1} \Big] \right] \nonumber \\
& \leq   C_{H_k}  \EE\lt[     \left(    \sqrt{1-\lambda_n^2 } \, |X_{n-1}|   + \lambda_n \right)^{-\theta_k } \zeta_{n-1}
  \right]\nonumber\\
  & \leq   C_{H_k}\la_n^{-\be_k}   \EE\bigg\{    \EE^{X_{n-1}}\bigg[ \left(   \sqrt{1-\lambda_n^2 } \, |X_{n-1}|   + \lambda_n \right)^{-\theta_k+\be_k } \nonumber\\
  & \qquad \qquad\qquad \qquad \big|  {\lambda_{n-1} }X_{n-1} -\sqrt{1-\lambda_{n-1}^2 } \, X_{n-2} \big|^{-\theta_k}\bigg]  \zeta_{n-2}
  \bigg\}\nonumber\\
& \leq   C_{H_k}\la_n^{-\be_k}   \EE\Bigg\{    \lt[ \EE^{X_{n-1}} \left(   \sqrt{1-\lambda_n^2 } \, |X_{n-1}|   + \lambda_n \right)^{-(\theta_k-\be_k)p }  \rt]^{1/p}\nonumber\\
&\qquad \qquad  \left[  \EE ^{X_{n-1}}\big|  {\lambda_{n-1} }X_{n-1} -\sqrt{1-\lambda_{n-1}^2 } \, X_{n-2} \big|^{-\theta_k q  } \right]^{1/q}   \zeta_{n-2}
  \Bigg\},
\end{align}
by applying H\"older inequality with $\frac1p+\frac1q=1$ in the last inequality.

 It is easy to see that under our assumption $4H_k-3<\be_k<2H_k-1$, we have $2\theta_k-1<\be_k<\th_k$.
 This enables us to  find  $p$ and $q$ such
that $0<(\theta_k-\be_k)p<1$ and  $0<\theta_k q<1$ hold. Therefore,
\begin{align}\label{bound1}
\EE^{X_{n-1}} \bigg[ \Big(   \sqrt{1-\lambda_n^2 }& \, |X_{n-1}|   + \lambda_n \Big)^{-(\theta_k-\be_k)p }\bigg] \nonumber \\
&\le { C_{\be_k,\theta_k}}(\sqrt{1-\lambda_n^2 }+ \lambda_n )^{-(\theta_k-\be_k)p }\le {C_{\be_k,\theta_k}} <\infty\,,
\end{align}
and
\begin{align}\label{bound2}
\EE ^{X_{n-1}} \bigg[\big|  {\lambda_{n-1} }X_{n-1} &-\sqrt{1-\lambda_{n-1}^2 } \, X_{n-2} \big|^{-\theta_k q  }\bigg]\nonumber \\
&\le {C_{\theta_k}} (\lambda_{n-1}
+\sqrt{1-\lambda_{n-1}^2 } \, |X_{n-2}|)^{-\theta_k q}\,.
\end{align}
Substituting \eqref{bound1} and \eqref{bound2} into \eqref{e.ikn1} we have
\begin{align}
\fI_{k,n}(\lambda_1,\cdots,\lambda_n) \le
&{ C_{H_k,\be_k}} \la_n^{-\be_k}   \EE\bigg[   (\lambda_{n-1}
+\sqrt{1-\lambda_{n-1}^2 } \, |X_{n-2}|)^{-\theta_k  }    {\zeta_{n-2}}
  \bigg]\,. \label{e.ikn2}
\end{align}
Continuing this way we obtain that for any $2\theta_k-1<\be_k<\theta_k$, \eqref{e.ikn3} holds.


\noindent \emph{{\bf Step 3:} The case $\frac12\le H_k<\frac34$.}  When $H_k=1/2$, it is easy to see that $\fI_{k,n}(\lambda_1,\cdots,\lambda_n)\equiv1$.  So we can assume
 $\frac12< H_k<\frac34 $.  The proof is similar to the case $H_k>\frac34$ except that  we take  $\be_k=0$ now and the proof is simpler. In fact, the H\"older inequality is valid   since we can still find  $p$ and $q$ such that $0<\theta_k p<1$ and  $0<\theta_k q<1$ hold. In conclusion, we have
\begin{align}\label{bound3}
\fI_{k,n}(\lambda_1,\cdots,\lambda_n) \le \     C_{H_k}^n\,.
\end{align}
This proves the lemma.
\end{proof}

To deal with the necessary condition in our main Theorem \ref{main_result1} in the next section, we also need a lower bound for $\mathfrak{I}_{k,n}(\lambda_1,\cdots,\lambda_n)$. We can obtain the lower bound when $H_k< 3/4$.  However, we are still not clear about the lower bound of $\mathfrak{I}_{k,n}(\lambda_1,\cdots,\lambda_n)$  when $H_k\ge 3/4$ except when $n=2$.
But this result is sufficient for our purpose.
\begin{lemma} \label{lem_J_kn_low}
We have the following lower bounds for $\mathfrak{I}_{k,n}(\lambda_1,\cdots,\lambda_n)$ defined by \eqref{I_k}.
\begin{enumerate}
\item[(1)] If $H_k<3/4$,  then there is a positive constant $c_{H_k}$,  such that {for all $\la_1,\cdots,\la_n$}
\begin{equation}
\mathfrak{I}_{k,n}(\lambda_1,\cdots,\lambda_n) \ge c_{ H_k}^n \,.\label{e.3.20lower}
\end{equation}
\item[(2)] If $H_k\ge 3/4$,  then there is a positive constant $c_{H_k}$, independent of $\la_2$    such that 
\begin{equation}
\mathfrak{I}_{k,2}(\lambda_1,\lambda_2) \ge c_{  H_k}
\la_2^{-(4H_k-3)} \,,\label{e.3.21lower}
\end{equation}
where $\la_2$ is defined by \eqref{lam}.
\end{enumerate}
\end{lemma}

\begin{proof}
First, we prove statement (1).
{Set  
\[
A_n:=\begin{cases}
\{X_1\leq 0,X_2\geq 0,\cdots,X_{n-1}\leq 0,   X_{ n}\geq 0\}&\qquad \hbox{when $n$ is even}\,;\\
\{X_1\leq 0,X_2\geq 0,\cdots,X_{n-1}\geq 0,   X_{ n}\leq 0\}&\qquad \hbox{when $n$ is odd}\,. \\
\end{cases}
\]
When $0<H_k\le \frac12$, {namely, $-1\leq\theta_k<0$ for $\theta_k$ defined by  \eqref{theta},  } we have
\begin{align}
\mathfrak{I}_{k,n}(\lambda_1,\cdots,\lambda_n)
:&= \EE\left[{ \lt|X_1\rt|^{-\theta_k}} \prod_{i=2}^{n}
\left|{\lambda_i }X_i-\sqrt{1-\lambda_i^2 }X_{i-1} \right|^{-\theta_k}\right]\nonumber\\
&\geq \EE\left[ { \lt|X_1\rt|^{-\theta_k}} \prod_{i=2}^{n}
\left|{\lambda_i }X_i - \sqrt{1-\lambda_i^2 }X_{i-1} \right|^{-\theta_k}\cdot\1_{A_n}\right]\nonumber \\
&\geq c_{H_k}^n\ \EE\left[ { \lt|X_1\rt|^{-\theta_k}} \left( \prod_{i=2}^{n}  \left(|X_i| \wedge |X_{i-1}| \right)^{-\theta_k} \right)\cdot\1_{A_n}\rt]\nonumber\\
&\geq  c_{H_k}^n\
\EE  \left[ { \lt(|X_1| \wedge |X_2|\wedge\cdots\wedge|X_n| \rt)^{-n\theta_k}}\cdot\1_{A_n}\rt]\,, \nonumber
\end{align}
 where we used the fact that $\la_i+\sqrt{1-\la_i^2}\geq 1$ in the above second inequality.} 
{Now we let 
\[
B_n:=\begin{cases}
\{X_1\leq -1,X_2\geq 1,\cdots,X_{n-1}\leq -1,   X_{ n}\geq 1\}&\qquad \hbox{when $n$ is even}\,;\\
\{X_1\leq -1,X_2\geq 1,\cdots,X_{n-1}\geq 1,   X_{ n}\leq -1\}&\qquad \hbox{when $n$ is odd}\,, \\
\end{cases}
\] 
which is contained in $A_n$.  
We proceed to get}
\begin{align*}
\mathfrak{I}_{k,n}(\lambda_1,\cdots,\lambda_n)~ &{\geq  c_{H_k}^n\
\EE  \left[ { \lt(|X_1| \wedge |X_2|\wedge\cdots\wedge|X_n| \rt)^{-n\theta_k}}\cdot   \1_{B_n} \rt]}\nonumber\\
&{\geq  c_{H_k}^n 
\left[ \PP( X_1\le -1 )\right]^n =c_{H_k}^n>0\,,}\nonumber
\end{align*}
where we  recall that $c_{H_k}$ is a generic positive constant which may be different in different places. 

When $\frac12< H_k\leq\frac34$, {i.e., $0< \theta_k \leq 1/2$,  recalling } $0\le \la_i\le 1$, we have
\begin{align*}
\mathfrak{I}_{k,n}(\lambda_1,\cdots,\lambda_n)
:&= \EE\left[\lt|X_1\rt| ^{-\theta_k}\prod_{i=2}^{n}
\left|{\lambda_i }X_i-\sqrt{1-\lambda_i^2 }X_{i-1} \right|^{-\theta_k}\right]\\
&\geq  \EE\left[ |X_1|^{-\theta_k} \left( \prod_{i=2}^{n}  \left(|X_i| +|X_{i-1}| \right) \right)^{-\theta_k}  \right]\,. 
\end{align*}
Notice that the factors $|X_i|^2$ may appear in the product $\prod_{i=2}^{n}  (|X_i| +|X_{i-1}|)$. But the expectation in the second line is bounded thanks to the assumption $0>-\theta_k\geq -1/2$. Thus, we have 
$$\mathfrak{I}_{k,n}(\lambda_1,\cdots,\lambda_n)\geq c_{H_k}^n>0\,.$$


Now, we prove statement (2).
In this case \emph{  $H_k>\frac34$}, {namely $\theta_k>1/2$}. We only need to consider the lower bound of { $\fI_{k,2}(\lambda_1,\lambda_2)$ defined in \eqref{I_k} with $n=2$ there}. Remember that $\la_1=1$, $\fI_{k,2}(\lambda_1,\lambda_2)$ is a function of $\la_2$.
{ It is clear that $\fI_{k,2}(1,\cdot)$ is continuous on $(0, 1]$}.   If $\la_2\to 1$,  then
$\fI_{k,2}(1,\la_2) \to \EE\lt[|X_1|^{-\theta_k}\lt|  X_2 \rt|^{-\theta_k}\rt]>0$.  {We want to understand the behaviour of this function as $\lambda_2$ approaches 0.} For simplicity, we assume   $0<\la_2 <\frac12$.     Then
{\begin{align}
\fI_{k,2}(\la_1,\la_2)&= \EE\lt[|X_1|^{-\theta_k}\lt|\la_2 X_2-X_1\rt|^{-\theta_k}\rt]\nonumber\\
&= \frac{1}{\sqrt{2\pi}} \EE^{X_2}\lt[ \int_{-\infty}^\infty |x_1|^{-\th_k}|\la_2 X_2-x_1|^{-\th_k}e^{-\frac{x_1^2}{2}} dx_1 \rt]\nonumber\\
&\geq c \EE^{X_2}\lt[ \int_0^1|x_1|^{-\th_k}|\la_2 X_2-x_1|^{-\th_k}dx_1 \rt]\nonumber\\
&=\EE^{X_2}\lt[|\la_2 X_2|^{-2\th_k+1}\cdot\int_0^{1/(\la_2 |X_2|)}|\tilde{x}_1|^{-\th_k}|1-\tilde{x}_1|^{-\th_k}d\tilde{x}_1\rt] \,, \nonumber
\end{align}
where in the last equality we make a change of variables $x_1\to \la_2 |X_2|\tilde{x}_1$. Then, by the conditions $0<\la_2 <\frac12$ and  $1/2<\theta_k<1$ we get
\begin{align}
	\fI_{k,2}(\la_1,\la_2)&\geq 
	\EE^{X_2}\lt[|\la_2 X_2|^{-2\th_k+1}\cdot\int_0^{2/  |X_2| }|\tilde{x}_1|^{-\th_k}|1-\tilde{x}_1|^{-\th_k}d\tilde{x}_1 \1_{\{|X_2|\leq 1\}} \rt] \nonumber\\
&\geq\EE^{X_2}\lt[ |\la_2 X_2|^{-2\th_k+1}\1_{\{|X_2|\leq 1\}} \cdot\int_0^{2}|\tilde{x}_1|^{-\th_k}|1-\tilde{x}_1|^{-\th_k}d\tilde{x}_1 \rt] \nonumber\\
&\geq  c_{\theta_k} \EE^{X_2}\lt[ |\la_2 X_2|^{-2\th_k+1}\1_{\{|X_2|\leq 1\}} \rt]\,. \label{J_21}
\end{align}
}
Moreover, it is not hard to see
\begin{align}\label{J_22}
\EE^{X_2}\lt[|\la_2X_2|^{-2\th_k+1} \1_{\{|X_2|\leq 1\}}\rt]&\geq c_{\theta_k}\la_2 ^{-2\th_k+1}\cdot\int_0^1x_2^{-2\th_k+1}dx_2\nonumber\\
&\geq c_{\theta_k} \la_2^{-2\th_k+1}.
\end{align}
{ Substituting \eqref{J_22} into \eqref{J_21} implies
\begin{align}\label{Est_J_2}
\fI_{k,2}(\la_1,\la_2)\geq c \la_2^{-2\th_k+1}= c\la_2^{3-4H_k}.
\end{align}}
This completes the proof.
\end{proof}

Before stating Lemma \ref{est_gk}, we introduce a set of indices $\cD_n$, which consists of all indices $\bm{\al}=(\al_1,\cdots,\al_n)$  such that
\begin{eqnarray}
&&\al_i
 \in  \lt\{0,\frac12d- |H|+\frac12\be^*, d- 2|H|+\be^* \rt\} \,,\quad \al_i+\al_{i+1}\not=0\,,  \label{Def:al_i}
\label{al_i}  \\
\hbox{and}&&|\bm{\al}|
 =\sum_{i=1}^n\al_i=(n-1)\lt(\frac12d- |H|+\frac12\be^*\rt)\,.
 \nonumber
\end{eqnarray}

\begin{lemma}\label{est_gk}
Let $h_n(\vbfs ):=\prod\limits_{k=1}^d h_{k,n} (\vbfs )$, where  $h_{k,n} (\vbfs ) \ (k=1,\cdots,d)$ are defined by \eqref{hks}.   Take any  \ $ \be_k\in (4H_k-3, 2H_k-1)$ for $k=d_*+1,\cdots,d$  and set
\begin{equation}\label{beta}
\be^*:=\be_{d_*+1}+\cdots+\be_d\,.
\end{equation}
Then, we have the following estimations for
$h_n(\vbfs )$.
\begin{enumerate}
\item[(i)] If $\frac12d- |H|+\frac12\be^*>0,$  then
\begin{align}\label{est_g}
h_n (\vec{\bfs} )
\leq C_{H }^nw_n^{|H|-d+\al_n}\sum_{\bm{\al}\in \cD_n}\lt(\prod_{i=1}^{n-1} w_i^{\al_i+2|H|-\frac32d-\frac12\be^*}\rt)\,,
\end{align}
where $w_i$ $(1\leq i\leq n)$ are given by \eqref{def_wi}.
Moreover, if $\al_i=d- 2|H|+\be^*$, then both of  $\al_{i-1}$ and $\al_{i+1}$ cannot be   $\ d- 2|H|+\be^*$.

\item[(ii)] If $\frac12d- |H|+\frac12\be^*\leq 0,$   then
\begin{equation}\label{est_h}
h_n (\vec{\bfs} )\leq C_{H }^n\prod_{i=1}^nw_i^{|H|-d}.
\end{equation}
\end{enumerate}
\end{lemma}

\begin{proof}
By Lemma \ref{lem_J_kn}, when $0<H_k<\frac34$,
\[\fI_{k,n}(\la_1,\cdots,\la_n)\leq C^n_{H_k}\,.
\]
Then, from \eqref{e.4.3} we obtain when $k\leq d_*$,
\begin{align}\label{hk_k<d}
h_{k,n} (\vec{\bfs} )\leq  { C_{H_k}^n} w_n^{H_k-1}
\left( \prod_{i=1}^{n-1} w_i^{2H_k-3/2} \right)
 \end{align}
When $\frac34\le H_k<1$, { substituting \eqref{e.ikn3} into \eqref{e.4.3}} we get for $d_*<k\leq d$,
\begin{eqnarray}\label{h_k,k>d}
h_{k,n}(\vec{\bfs} )\leq { C_{H_k,\beta_k}^n} w_n^{H_k-1}
\left( \prod_{i=1}^{n-1} w_i^{2H_k-3/2} \right)
\left( \prod_{  i=2 }^{n }  (w_i+w_{i-1})^{\frac12- H_k} \right) \times
\left(\prod_{i=2}^n\lambda_i^{-\be_k}\right).\nonumber\\
\end{eqnarray}
{Recall that $\lambda_i=\sqrt{\frac{w_{i-1}}{w_{i-1}+w_i}}$ $(i\geq 2)$ are defined} in \eqref{lam}.  Combining \eqref{hk_k<d} and \eqref{h_k,k>d},   we have
\begin{align}
h_n (\vec{\bfs} ):= & \prod_{k=1}^d {  h_{k,n}(\vec{\bfs} )  }
\nonumber    \\
\leq & C_{H }^n \prod_{k=1}^{d}\left[w_n^{H_k-1}
\left( \prod_{i=1}^{n-1} w_i^{2H_k-3/2} \right)
\left( \prod_{  i=2 }^{n }  (w_i+w_{i-1})^{\frac12- H_k} \right)\right]
 \prod_{k=d_*+1}^{d}\prod_{i=2}^n\la_i^{-\be_k}\nonumber \\
{=} & C_{H }^n w_n^{|H|-d}\left( \prod_{i=1}^{n-1} w_i^{2|H|-\frac32d-\frac12\be^*} \right)\left( \prod_{  i=2 }^{n }  (w_i+w_{i-1})^{\frac12d- |H|+\frac12\be^*} \right).\label{re.4.6}
\end{align}
If $\frac12d- |H|+\frac12\be^*>0$, {then it follows from \eqref{re.4.6} by expanding the  product $\prod_{  i=2 }^{n }  (w_i+w_{i-1})^{\frac12d- |H|+\frac12\be^*}$ that}
\begin{align*}
h_n (\vbfs )\leq C_{H }^nw_n^{|H|-d+\al_n}\sum_{\bm{\al}\in \cD_n}\lt(\prod_{i=1}^{n-1} w_i^{\al_i+2|H|-\frac32d-\frac12\be^*}\rt)\,,
\end{align*}
where $\bm{\al}=(\al_1,\cdots,\al_n)$ is in $\cD_n$. 
If $\frac12d- |H|+\frac12\be^*\leq 0$, {then we can see from \eqref{re.4.6} that}
\begin{align*}
h_n (\vec{\bfs} )&\leq C_{H }^n w_n^{|H|-d}\left( \prod_{i=1}^{n-1} w_i^{2|H|-\frac32d-\frac12\be^*} \right)\left( \prod_{  i=2 }^{n }  w_{i-1}^{\frac12d- |H|+\frac12\be^*} \right)= C_{H }^n\prod_{i=1}^{n} w_i^{|H|-d}\,.
\end{align*}
We have finished the proof.
\end{proof}

{Finally, we have our main result in this section.}
\begin{proposition}\label{est_un}
 The second moment of $n$-th chaos   $u_n(t, x)$ given by \eqref{e:3.3}-\eqref{e.3.3}   can be bounded as follows.~\\
(1)  If \ $\frac12d- |H|+\frac12\be^*>0$, then {for any $(t,x)\in\RR_+\times\RR^d$}
 \begin{align}\label{E[u_n(t,x)^2]}
	\EE[u_n(t,x)^2]&\leq C^n t^{n(|H|+2H_0-d)} n!\cdot\sum_{\bm{\al}\in \cD_n} \int_{{\bT_1\times\bT_1}}(1-s_n)^{-\rho_n}(1-r_{n})^{-\rho_n}\nonumber\\
	&\qquad\times \prod_{i=1}^{n-1} |s_{i+1}-s_i|^{-\rho_i} \prod_{i=1}^{n-1}|r_{i+1}-r_{i}|^{-\rho_i}{\prod_{i=1}^n\gamma_0(s_i-r_{i})} d\vec{\bfs} d\vec{\bfr}\,,
\end{align}
with $\bm{\al}=(\al_1,\cdots,\al_n) \in \cD_n $ is given by \eqref{al_i} and { the dependence of $\{\rho_i\}$ on $\bm{\al}$ is determined by}
\begin{equation}\label{rho_i}
{\begin{cases}
\rho_i=\frac{1}{2}(\frac32d+\frac12\be^*-2|H|-\al_i)\,, 1\leq i\leq n-1,\\
\rho_n=\frac{1}{2}(d-|H|-\al_n)\,.
\end{cases}}
\end{equation}
(2)  If \ $\frac12d- |H|+\frac12\be^*\le 0$, then
 \begin{align}\label{E[u_n(t,x)^2_2]}
	\EE[u_n(t,x)^2]&\leq C^n t^{n(|H|+2H_0-d)} n!\cdot \int_{{\bT_1\times\bT_1}}{ \prod_{i=1}^{n}|r_{i+1}-r_{i}|^{-\frac{1}{2}(d-|H|) }}\nonumber\\
	&\qquad{ \times \prod_{i=1}^{n} |s_{i+1}-s_i|^{-\frac{1}{2}(d-|H|) } } {\prod_{i=1}^n\gamma_0(s_i-r_{i})} d\vec{\bfs} d\vec{\bfr}\,.
\end{align}
\end{proposition}

\begin{proof}
By \eqref{hk} and \eqref{e.4.1}, we see that
\begin{align}
 \EE  \left[ u_n(t,x)^2 \right]
\leq   & C^n \int_{0<s_1<\cdots<s_n<t\atop 0< r_1, \cdots, r_n<t}  h_{ n}^{1/2}  (\vbfs )h_{ n}^{1/2}  (\vbfr )
\prod_{i=1}^n \gamma_0(s_i-r_i) d\vec{\bfs} d\vec{\bfr}\,,
 \end{align}
By substituting the bound for $h_n$ as derived in Lemma \ref{est_gk} into the inequality above, we obtain
\begin{align}\label{u_n_est}
\EE\left[ u_n(t,x)^2\right]
\leq& C^n \sum_{\bm{\al}\in \cD_n}  n! \int_{{\bT_t\times\bT_t }} (t-s_n)^{\frac{1}{2}(\al_n+|H|-d)}(t-r_{n})^{\frac{1}{2}(\al_n+|H|-d)} \nonumber \\
&\qquad\qquad \times\prod_{i=1}^n\gamma_0(s_i-r_{i}) \prod_{i=1}^{n-1} |s_{i+1}-s_i|^{\frac12 (\al_i+2|H|-\frac32d-\frac12\be^*)}\nonumber \\
&\qquad\qquad\times \prod_{i=1}^{n-1}|r_{i+1}-r_{i}|^{\frac12 (\al_i+2|H|-\frac32d-\frac12\be^*)} d\vec{\bfs} d\vec{\bfr} \,.
\end{align}
Now a  change of  variables $s_i \to t\cdot s_i$ and $r_i\to t\cdot r_i$ $(1\leq i\leq n)$  yields
\eqref{E[u_n(t,x)^2]}. The estimate \eqref{E[u_n(t,x)^2_2]} can be proved similarly.
\end{proof}

\begin{remark} { We make a remark on the condition 
 $ \be_k\in (4H_k-3, 2H_k-1)$. It is used in  the equation \eqref{E[u_n(t,x)^2]}, {where we wish $\rho_i$ defined in \eqref{rho_i} or equivalently $\be^*$ to be as small as possible.} Thus, during  the  proofs in the future we may take
$\beta_k=\lt\{\left(4H_k-3\right)\vee 0\rt\}+\vare$ {and}
$\be^*=4H^*-3d^*+\vare$ for any arbitrarily small $\vare>0$.  
However, the presence of this $\vare$ 
would distract the main idea of the proof.   Since { our final conditions in \eqref{cond.0<H<1} are in strict inequality}, we can let $\vare=0$,
namely, $\beta_k=\lt\{\left(4H_k-3\right)\vee 0\rt\} $ {and}
$\be^*=4H^*-3d^* $,  in the following proofs to simplify  the presentation.   This will not cause problem. 
For instance, let us take a look of  the condition $\rho_1<1$ in equation \eqref{Cond_z's}, which  is equivalent to 
$\frac{1}{2}(\frac32d+\frac12\be^*-2|H|)<1$. If this is true, then  
it is easy to see that one can find a (sufficiently small)  $\vare$ such that $\frac{1}{2}(\frac32d+\frac12(\be^*+\vare)-2|H|)<1$. 
In another word, the condition that there is a $\vare>0$ such that $\frac{1}{2}(\frac32d+\frac12(\be^*+\vare)-2|H|)<1$ is equivalent to the condition 
$\frac{1}{2}(\frac32d+\frac12 \be^* -2|H|)<1$.  
This means that if we use  $\be^*$ to replace
$\be^*+\vare$,  we will obtain the same set of conditions 
since we have only finite set of conditions. } %

\end{remark}

\section{Necessary conditions and sufficient conditions}\label{s.proof}

In this section, we give a proof of Theorem \ref{main_result1}.
\subsection{Proof of Theorem \ref{main_result1}: Sufficiency}\label{sufficient}
Now we begin to prove the sufficiency of the conditions in Theorem \ref{main_result1}.  Proposition \ref{main_cor4} and Proposition \ref{main_prop5} will be by-products  of our proof. We only consider $H_0>1/2$. The case when $H_0=1/2$ can be proved in the same way, and conditions \eqref{cond.0<H<1} becomes \eqref{H0=1/2} in this case.
%
%
%
%
%
%
We shall divide   the proof into  two cases: { (I)  $\frac12 d_*-H_*\le d^*-H^*$  and (II) $\frac12 d_*-H_*> d^*-H^*$.}


\textbf{The case $\frac12 d_*-H_*\le d^*-H^*$.}
By \eqref{E[u_n(t,x)^2_2]}, we have {for any $(t,x)\in\RR_+\times\RR^d$}
\begin{align}
\EE[u_n(t,x)^2]
\leq& C^n n! t^{n(|H|+2H_0-d)} \int_{{\bT_1\times\bT_1} }   \prod_{i=1}^n(s_{i+1}-s_i)^{\frac{|H|-d}{2}}\nonumber\\
&\qquad\qquad\qquad\qquad\qquad (r_{i+1}-r_{i})^{\frac{|H|-d}{2}}
\cdot|s_i-r_i|^{2H_0-2}  d\vec{\bfs} d\vec{\bfr} \,,
\label{e.4.4}
\end{align}
here in this case, {recall the conventions $\bT_1$ is ordered set defined in \eqref{Tsr} and} $s_{n+1}=r_{n+1}=1$.
Let $\tilde h_n(\vbfs )$ be the symmetric extension of
$\prod_{i=1}^n(s_{i+1}-s_i)^{\frac{|H|-d}{2}} \1_{\{0< s_1<\cdots<s_n< 1\}}$ to $[0, 1]^n$.
The application of  the { Hardy-Littlewood-Sobolev}   inequality (see Lemma \ref{Hardy-Littlewood-Sobolev} in Appendix) yields
\begin{align}
\EE[u_n(t,x)^2]\leq&
C^n(n!)^{-1} t^{n(|H|+2H_0-d)} \int_{[0,1]^{2n}  }   \tilde h_n(\vbfs) \tilde h_n(\vbfr)
\cdot|s_i-r_i|^{2H_0-2}  d\vec{\bfs} d\vec{\bfr}\nonumber \\
=&C^n(n!)^{-1} t^{n(|H|+2H_0-d)} \left[ \int_{[0,1]^{ n}  }   \tilde h_n(\vbfs)^{\frac{1}{H_0}}    d\vec{\bfs}  \right]^{2H_0} \nonumber \\
\le &C^n (n!)^{2H_0-1}  t^{n(|H|+2H_0-d)} C_H^n\lt(\int_{\bT_1}\prod_{i=1}^n(s_{i+1}-s_i)^{\frac{|H|-d}{2H_0}}d\vbfs\rt)^{2H_0}\,.
\nonumber 
\end{align}
Then, by \cite[Lemma 4.5]{HHNT} we get
\begin{align}
	\EE[u_n(t,x)^2]\leq &C^n (n!)^{2H_0-1} C_H^n t^{n(|H|+2H_0-d)} \left[ \Gamma \left(\left(\frac{|H|-d}{2H_0}+1\right)n +1\right)\right]^{-2H_0}\nonumber \\
=&C^n (n!)^{-( |H|-d+1)} C_H^n t^{n(|H|+2H_0-d)}\,.\label{e.4.6}
\end{align}
We write the statement \eqref{e.4.6} in short, for any $(t,x)\in\RR_+\times\RR^d$
\begin{equation}
\|u_n(t,x)\|_{2}\le C_H^n (n!)^{-\frac{ |H|-d+1}{2}}   t^{\frac{n(|H|+2H_0-d)}{2}}
\end{equation}
under the condition
\[\frac{|H|-d}{2H_0}>-1\Leftrightarrow |H|+2H_0>d,
\]
which is exactly the { second condition \eqref{cond.0<H<1_b}}. 
Furthermore, { by the hypercontractivity inequality}, we have {for any $(t,x)\in\RR_+\times\RR^d$}
\begin{equation}
\|u_n(t,x)\|_p\le p^{n/2} C_H^n (n!)^{-\frac{ |H|-d+1}{2}}   t^{\frac{n(|H|+2H_0-d)}{2}}\,. \label{e.4.7}
\end{equation}
Now,  if
\begin{equation}
|H|>d-1\,,
\end{equation}
then the chaos expansion of the solution is summable in $L^p(\Omega)$. {By the Stirling formula, we have $\Gamma (an+1)\sim (n!)^a$, and then by the evaluation of Mittag-Leffler summation (see, e.g., \cite[Lemma A.3]{BalanSong2019}), it holds that {for any $(t,x)\in\RR_+\times\RR^d$}}
\begin{align*}
\EE \left[\left| u (t,x)\right|^p \right]
\le &\left(\sum_{n=0}^\infty \|u_n(t,x)\|_p\right)^p  \\
\le & \left( \sum_{n=0}^\infty  p^{n/2} C_H^n (n!)^{-\frac{ |H|-d+1}{2}}   t^{\frac{n(|H|+2H_0-d)}{2}} \right)^p \\
\leq & \left( \sum_{n=0}^\infty \frac{p^{n/2} C_H^n t^{\frac{n(|H|+2H_0-d)}{2}}}{\Gamma(\frac{ |H|-d+1}{2}n+1)}   \right)^p \\
\leq & C_{ H} \exp
\left[ c_{H} p^{\frac{|H|-d+2} {|H|-d+1}}  t  ^{\frac{|H|+2H_0-d}{|H|-d+1}}\right] \,,  
\end{align*}
where the generic positive constants $C_H$ and $c_H$ may differ from line to line.

\begin{remark}
{It is interesting to note that the right-hand side of \eqref{e.4.4}, as a function of $H_0\in[\frac 12,1]$, takes the minimum value at $H_0=1$}, which is exactly equal to
\begin{align}
 &n! t^{n(|H|+2-d)} \left[\int_{ {\bT_1} }   \prod_{i=1}^n(s_{i+1}-s_i)^{\frac{|H|-d}{2}}  d\vec{\bfs} \right]^2 \nonumber\\
 &\qquad = C^n    t^{n(|H|+2-d)} (n!)^{1-2\left(1+
 \frac{|H|-d}{2}\right)}= (n!)^{-( |H|-d+1)}t^{n(|H|+2-d)}\,.
\end{align}
 In terms of the exponent of $n!$,  it is the same as the estimate \eqref{e.4.6}. This means that our approach to using the Hardy-Littlewood-Sobolev inequality is ``sharp" if we want $u(t,x)$ to be summable. {Moreover, $H_0$ has no contribution to the exponent of $n!$ on the right-hand side of \eqref{e.4.6}, which means 
 that it plays no role in guaranteeing the summability of the right-hand side of \eqref{e.4.6}.}
 \end{remark}


\smallskip
\textbf{The case $  \frac12 d_*-H_*> d^*-H^*$.} This case is more complicated. From \eqref{E[u_n(t,x)^2]} in Proposition \ref{est_un}, it suffices to consider the following integral:
\begin{align}\label{Mainterm_d=2}
	{\cI_{\vec{\bm{\rho}},\gamma_0}}:=&\int_{{\bT_1\times\bT_1}}(1-s_n)^{-\rho_n}(1-r_{n})^{-\rho_n} {\prod_{i=1}^n\gamma_0(s_i-r_{i})}\\
	&\qquad\qquad \times \prod_{i=1}^{n-1} |s_{i+1}-s_i|^{-\rho_i} \prod_{i=1}^{n-1}|r_{i+1}-r_{i}|^{-\rho_i} d\vec{\bfs} d\vec{\bfr} \,,\nonumber
\end{align}
where we denote $\vrho   \,$  by $(\rho_1,\cdots,\rho_n)$ and $\rho_i$, $i=1,\cdots,n$ are given by \eqref{rho_i}.

If we use the Hardy-Littlewood-Sobolev inequality as for \eqref{e.4.6} { and by \cite[Lemma 4.5]{HHNT}}, we obtain   when $\rho_i<H_0$ for $i=1, \cdots, n$
\begin{align}
\EE[u_n(t,x)^2]\leq&  (n!)^{2H_0-1}  t^{n(|H|+2H_0-d)} C_H^n\lt(\int_{{\bT_1}}\prod_{i=1}^n(s_{i+1}-s_i)^{-\frac{\rho_i}{ H_0}}d\vbfs\rt)^{2H_0}
\nonumber \\
= & (n!)^{2H_0-1} C_H^n t^{n(|H|+2H_0-d)} \left[ \Gamma \left(\sum_{i=1}^n (1-\frac{ \rho_i}{H_0})\right)\right]^{-2H_0}\nonumber \\
=& (n!)^{-( |H|-d+1)} C_H^n t^{n(|H|+2H_0-d)}\,, \label{e.4.12}
\end{align}
where the last step is followed by the Stirling formula.  
From the definition of $\rho_i$'s, the condition  $\rho_i<H_0$ for $i=1, \cdots, n$ is equivalent to
\[
 H_0+ |H|> \frac34 d+\frac{\beta^*}{4} \quad \hbox{or}\quad H_0+ H_*> \frac34 d_* \,.
\]
{By the similar arguments as in the case $\frac12 d_*-H_*\le d^*-H^*$ below \eqref{e.4.6}, the  Proposition \ref{main_cor4}   is  proved.}

Now we return to the question concerning the finiteness of $\EE \left[ u_n^2 (t,x)\right]$. It is self-evident that  $|H|\ge  d-1$ is sufficient.   But it is certainly not necessary since $H_0$  may  play a role now.  We would like to seek the necessary and sufficient conditions.

 The right-hand side of \eqref{E[u_n(t,x)^2]} is a multiple integrals with some
simple integrands. At first glance, we may think that the integrability problem of these kernels is simple. However,  it  can be { a complicated problem} in analysis. { There are some studies} about similar integrals
(e.g., \cite{Shi, Wu, Zhou}). However, we cannot find results which are directly applicable.
The difficulty is that the  {$\rho_i's$ appeared }in the integrand in
\eqref{Mainterm_d=2}  are different.   To solve this integrability problem,
our idea is to use both the Hardy-Littlewood-Sobolev inequality and
  the  H\"older-Young-Brascamp-Lieb inequality obtained in \cite{BCCT2008}, which we shall recall in Appendix \ref{appenA}. 

Take arbitrary   $\bm{\alpha}:=(\alpha_1,\cdots,\alpha_n)
\in \cD_n$. Notice  that $\al_i+\al_{i+1}\neq 0$.
 We divide our discussion on the integration  $\cI_{\vec{\bm{\rho}},\gamma_0}$ according to the following two cases.
\begin{equation}\label{Algo}
	\begin{cases}
\hbox{\bf Case 1:} \qquad 		\al_1\neq 0\,, &\text{we use}   \text{ the Hardy-Littlewood}\\
&\qquad\qquad\qquad\qquad\hbox{-Sobolev inequality } \eqref{ineq_Hardy-Littlewood-Sobolev} \,;\\
\hbox{\bf Case 2:} \qquad		\al_1= 0\,, \al_2 \neq 0\,, &\text{we   use  }   \text{ the H\"older-Young}\\
&\qquad\qquad\ \ \hbox{-Brascamp-Lieb  inequality  }\eqref{Ineq.HYBL} \,.
	\end{cases}
\end{equation}

\begin{remark}
As mentioned previously in   introduction, it is widely believed  that     the Hardy-Littlewood-Sobolev inequality is the best tool to  handle  the integral \eqref{Mainterm_d=2} over the simplex $\bT_1\times\bT_1$.  However, for the specific case of $d=1$ with $H_1$ abbreviated as $H$, this approach yields the sufficient  condition $H+H_0\geq \frac 34$   which  is,  as we see now,  not a necessary  one.  This approach was first introduced in \cite{HHLNT2,HHLNT1} and then used in some related references \cite{Balan,BalanSong2019,CH2021,SSX2020}, just to name a few. In particular, it utilizes the Hardy-Littlewood-Sobolev inequality for all $\bm{\alpha}\in \cD_n$ simultaneously, overlooking the nuanced yet crucial distinctions we've highlighted in the algorithm \eqref{Algo}, especially Case 2.
\end{remark}

Let us discuss the cases listed in \eqref{Algo} separately.

\noindent {\bf Case 1:} When $\al_1\neq0$, we   integrate $s_1$ and $r_1$ in  \eqref{Mainterm_d=2} first. This means we write
\begin{align}\label{Mainterm.1}
	{\cI_{\vec{\bm{\rho}},\gamma_0}}:=&\int_{{0<s_2<s_3<\cdots<s_{n}<t \atop 0<r_2<r_3<\cdots<r_{n}<t}}(1-s_n)^{-\rho_n}(1-r_{n})^{-\rho_n} {\prod_{i=2}^n\gamma_0(s_i-r_{i})}\\
	&\times \prod_{i=2}^{n-1} |s_{i+1}-s_i|^{-\rho_i} \prod_{i=2}^{n-1}|r_{i+1}-r_{i}|^{-\rho_i} \cI_1(s_2, r_2)
%
d\vec{\bfs}_2 d\vec{\bfr}_2 \,, \nonumber
\end{align}
where
\begin{equation}
\cI_1(s_2, r_2):=\int_{{0<s_1<s_2 \atop 0<r_1<r_2}} \gamma_0(s_{1}-r_{1})|s_{2}-s_{1}|^{-\rho_{1}}|r_{2}-r_{1}|^{-\rho_{1}} dr_1ds_1 \,. \label{e.def_I1}
\end{equation}
Noticing  that under the second inequality \eqref{cond.0<H<1_b} :$|H|+2H_0>d$,  it holds
\begin{align}
H_0-\rho_1&=H_0-\frac12\lt[\frac32d+\frac12\be^*-2|H|-\al_1\rt]>0,\label{e.cI_1}
\end{align}
since $\al_1\in \lt\{\frac12d- |H|+\frac12\be^*, d- 2|H|+\be^* \rt\}$. Then an application of  the Hardy-Littlewood-Sobolev inequality (Lemma \ref{Hardy-Littlewood-Sobolev} in Appendix) yields
\begin{align*}
\cI_1(s_2, r_2)	\leq& C_{H_0,\rho_1} \lc\int_{0<s_1<s_2}|s_{2}-s_{1}|^{-\frac{\rho_{1}}{H_0}} ds_1\rc^{H_0} \lc\int_{0<r_1<r_2}|r_{2}-r_{1}|^{-\frac{\rho_{1}}{H_0}} dr_1\rc^{H_0} \\
	=& C_{H_0,\rho_1} B\lc1,1-\frac{\rho_{1}}{H_0}\rc^{2H_0} s_2^{H_0-\rho_1}r_2^{H_0-\rho_1} \leq C_{H_0,\rho_1}  t^{2(H_0-\rho_1)}\,,
\end{align*}
where { we recall $B(x,y)=\frac{\Gamma(x)\Gamma(y)}{\Gamma(x+y)}$} is the Beta function.

\bigskip
\noindent {\bf Case 2:}  When $\al_1=0$, $\al_2 \neq 0$, we   integrate $s_1, s_2, r_1, r_2$ in  \eqref{Mainterm_d=2} first.  Namely, we write now
 \begin{align}\label{Mainterm.2}
	{\cI_{\vec{\bm{\rho}},\gamma_0}}:=&\int_{{0<s_3<\cdots<s_{n}<t \atop 0<r_3<\cdots<r_{n}<t}}(1-s_n)^{-\rho_n}(1-r_{n})^{-\rho_n} {\prod_{i=3}^n\gamma_0(s_i-r_{i})}\\
	&\times \prod_{i=3}^{n-1} |s_{i+1}-s_i|^{-\rho_i} \prod_{i=3}^{n-1}|r_{i+1}-r_{i}|^{-\rho_i} \cI_2(s_3, r_3)  d\vec{\bfs}_3 d\vec{\bfr}_3 \,,\nonumber
\end{align}
where
\begin{align}\label{I_2(s3,r3)}
\cI_2  (s_3, r_3):=&\int_{{0<s_1<s_2<s_3 \atop 0<r_1<r_2<r_3}}{\prod_{i=1}^{2}\gamma_0(s_{i}-r_{i})}   |s_{3}-s_{2}|^{-\rho_{2}}|r_{3}-r_{2}|^{-\rho_{2}}\nonumber\\
&\qquad\qquad |s_{2}-s_{1}|^{-\rho_{1}}|r_{2}-r_{1}|^{-\rho_{1}} ds_1ds_2dr_1dr_2 \,.
\end{align}
We claim that
\begin{equation*}
\sup_{0\le s_3, r_3\le t} \cI_2(s_3, r_3)\leq C_t<+\infty\,.
\end{equation*}
We shall prove it via the so-called non-homogeneous H\"older-Young-Brascamp-Lieb inequality (see Theorem \ref{Thm.HYBL_N} in Appendix). {This inequality aims at the following multilinear functional
\begin{equation}\label{Def.Multi}
	\Lambda(f_1,\dots,f_m)=\int \prod_{j=1}^m f_j(l_j(\bfx))\prod_{j=1}^n d\mu(x_j)\,,
\end{equation}
where $l_j(\cdot)$'s are linear functions. In the seminal paper \cite[Theorem 2.2]{BCCT2010},  the authors established the condition \eqref{Cond.codim} that is both necessary and sufficient for the finiteness of $\Lambda(f_1,\dots,f_m)$.} 
To use this theorem, we introduce the following linear transformations {$l_j :\RR^4\to \RR$}
\begin{align*}
	&l_1(s_1,s_2,r_1,r_2)=r_2-r_1\,,~ l_3(s_1,s_2,r_1,r_2)=r_2\,,~ l_5(s_1,s_2,r_1,r_2)=s_1-r_1\,,\\
	&l_2(s_1,s_2,r_1,r_2)=s_2-s_1\,,~ l_4(s_1,s_2,r_1,r_2)=s_2\,,~ l_6(s_1,s_2,r_1,r_2)=s_2-r_2\,,
\end{align*}
and  the  nonnegative functions $f_j:\RR^4\to \RR_+$
\begin{align*}
	&f_1(x)=|x|^{-\rho_1}\1_{\{0< x< r_3\}}\,,~ f_3(x)=|r_3-x|^{-\rho_2}\1_{\{0< x< r_3\}}\,,~ f_5(x)=|x|^{-\gamma_0}\1_{\{|x|<1\}}\,,\\
	&f_2(x)=|x|^{-\rho_1}\1_{\{0< x< s_3\}}\,,~ f_4(x)=|s_3-x|^{-\rho_2}\1_{\{0< x< s_3\}}\,,~ f_6(x)=|x|^{-\gamma_0}\1_{\{|x|<1\}}\,,
\end{align*}
where {$\gamma_0=2-2H_0$ is the fractional power in the kernel $\gamma_0(x)=|x|^{-\gamma_0}=|x|^{2H_0-2}$} and
\begin{align*}
\begin{cases}	
\rho_1=\frac{1}{2}\lt(\frac32d+\frac12\be^*-2|H|\rt),\\
\rho_2=\frac{1}{2}\lt(\frac32d+\frac12\be^*-2|H|-\al_2\rt)\,.
\end{cases}
\end{align*}
With these notations we can first bound $\cI_2(s_3, r_3)$
\begin{align*}
\cI_2(s_3, r_3)
\leq&\int_{{0<s_1,s_2<1 \atop 0<r_1,r_2<1}} {\prod_{i=1}^{2}\gamma_0(s_{i}-r_{i})}|s_{3}-s_{2}|^{-\rho_{2}}\1_{\{0<s_2<s_3\}} |r_{3}-r_{2}|^{-\rho_{2}}\1_{\{0<r_2<r_3\}}\\
	&\qquad\times |s_{2}-s_{1}|^{-\rho_{1}}\1_{\{0<s_2-s_1<s_3\}}|r_{2}-r_{1}|^{-\rho_{1}}\1_{\{0<r_2-r_1<r_3\}}ds_1ds_2dr_1dr_2\\
	=:&\int_{[0,1]^4} \prod_{j=1}^6 f_j(l_j(s_1,s_2,r_1,r_2)) ds_1ds_2dr_1dr_2 =:\cJ(f_1,\cdots,f_6).
\end{align*}

Now, { the above integral $\cJ(f_1,\cdots,f_6)$} is in a form that the H\"older-Young-Brascamp-Lieb theorem may apply.  We want to show that under the condition
\eqref{cond.0<H<1} we  can appropriately choose $p_1, \cdots, p_6$ such that {the dimension condition \eqref{Cond.codim}} of Theorem \ref{Thm.HYBL_N} in the appendix is verified so that  we can apply  this H\"older-Young-Brascamp-Lieb   theorem   to obtain
\begin{equation}\label{Mainterm.2_Goal}
\cI_2(s_3, r_3)\le 	\cJ(f_1,\cdots,f_6)\leq \prod_{j=1}^6 \|f_j\|_{L^{p_j}(\RR)}\leq C<\infty \,.
\end{equation} 
To verify {the dimension condition \eqref{Cond.codim}} of Theorem \ref{Thm.HYBL_N}, we want to show that there exist {$p_1, \cdots, p_6\in[1,+\infty)$} so that
\begin{equation}
\begin{split}
\hbox{$f_j\in L^{p_j}(\RR)$ and for every subspace $V\subseteq \RR^4$:}\\
co\dim_{\cH} (V)\geq \sum\limits_{j=1}^6  \frac{1}{p_j} \cdot  co\dim_{\cH_j} (l_j(V))\,.\quad 
\end{split} \label{e.dimension}
\end{equation}

Denote $z_j=p_j^{-1}\in(0,1]$ for $j=1,\cdots,6$.    Take $\be^*=\be_{d_*+1}+\cdots+\be_d=4H^*-3d^*$ in Lemma \ref{est_gk}.  Then the integrability conditions $f_j\in L^{p_j}(\RR)$,    $j=1,\cdots,6$ are equivalent to
\begin{equation}\label{Cond_z's}
	\lt\{\begin{split}
	&1\geq z_1\,,z_2>\rho_1=\frac{1}{2}(\frac32d+\frac12\be^*-2|H|) {=\frac12\lt(\frac32d_*-2H_*\rt)}\,,\\
	&1\geq z_3\,,z_4>\rho_2=\frac{1}{2}(\frac32d+\frac12\be^*-2|H|-\al_2) {=\frac12\lt(\frac32d_*-2H_*-\al_2\rt)},\\ &1\geq z_5\,,z_6 >\gamma_0=2-2H_0\,.
	\end{split}\rt.
\end{equation}

By the assumption  $H_0>\frac12$ we can choose
$z_5, z_6$ so that the  third line  in \eqref{Cond_z's} holds true. As for the first and second lines, since $\al_2>0$, we only need to explain that there are $z_1$ and $z_2$ such that  the first line in \eqref{Cond_z's} holds.
Note that
\begin{align*}
\rho_1<1&\Leftrightarrow \frac32d_*-2H_*<2 \quad \\
&\Leftrightarrow H_*>\frac34d_*-1. 
\end{align*}
Therefore,  the integrability conditions \eqref{Cond_z's} are implied by   the assumption $H_0>\frac12$ and the first inequality \eqref{cond.0<H<1_a}:
\begin{align}\label{condi_1}
H_*>\frac34d_*-1\,.
\end{align}


On the other hand, we need to verify the dimensional conditions in  \eqref{e.dimension}. For fixed co-dimension of $V$, our strategy is to select $V$ such that $co\dim_{\cH_j} (l_j(V))=1$ {as many $j$ as possible}, i.e. $\dim_{\cH_j} (l_j(V))=0$ {as many $j$ as possible}. It is not difficult to { see that} 
\[\dim_{\cH_j} (l_j(V))=0\ \hbox{if }\ V=\ker(l_j).\]
Therefore, we only need to take into account those $V$'s which are given by  $\ker(l_j)$ ($1\leq j\leq 6$) or their intersections.
Thus,  we shall select $J\subseteq\{1,\cdots,6\}$ such that $V=\cap_{j\in J} \ker (l_j)$ and in this case
\begin{align}\label{Cond.dim_ker}
	\dim_{\cH}(V)=&\dim(\cap_{j\in J} \ker (l_j))\nonumber\\
=&\dim(\cH)-\dim\lt(span\{l_j, j\in J\}\rt)\nonumber \\
	=&\dim(\cH)-\rank([l_j]_{j\in J})\,.
\end{align}
In the following, we shall discuss the cases  $co\dim_\cH (V)=4,3,2,1,0$. It is trivial to see that the dimension condition \eqref{e.dimension} holds when $co\dim_\cH (V)=0$. Therefore, we just have to verify  \eqref{e.dimension}
for $co\dim_\cH (V)=4,3,2,1$ step by step.  This is done in  Lemma
\ref{verify_HYBL} in Appendix
under conditions \eqref{cond.0<H<1}. This means that  \eqref{Mainterm.2_Goal} is achieved under conditions \eqref{cond.0<H<1} by using  the H\"older-Young-Brascamp-Lieb  inequality.


{To summarize {{\bf Case 1}} and {{\bf Case 2}},} we obtain that $\cI(\rho,\gamma)$ defined by \eqref{Mainterm_d=2} can be bounded as
\begin{numcases}{{\cI_{\vec{\bm{\rho}},\gamma_0}}\leq}
	B_1\cdot \int_{{0<s_2<s_3<\cdots<s_{n}<t \atop 0<r_2<r_3<\cdots<r_{n}<t}}(1-s_n)^{-\rho_n}(1-r_{n})^{-\rho_n} {\prod_{i=2}^n\gamma_0(s_i-r_{i})}\nonumber \\
\qquad  \times \prod_{i=2}^{n-1} |s_{i+1}-s_i|^{-\rho_i} \prod_{i=2}^{n-1}|r_{i+1}-r_{i}|^{-\rho_i}
d\vec{\bfs}_2 d\vec{\bfr}_2
 	\quad \hbox{$\al_1\neq 0$};  \label{Maintm_1} \\
	C\cdot\int_{{0<s_3<\cdots<s_{n}<t \atop 0<r_3<\cdots<r_{n}<t}}(1-s_n)^{-\rho_n}(1-r_{n})^{-\rho_n} {\prod_{i=3}^n\gamma_0(s_i-r_{i})}\nonumber \\
\qquad 	 \times \prod_{i=3}^{n-1} |s_{i+1}-s_i|^{-\rho_i} \prod_{i=3}^{n-1}|r_{i+1}-r_{i}|^{-\rho_i}    d\vec{\bfs}_3 d\vec{\bfr}_3\,,\quad  \hbox{$\al_1= 0\,,\al_2\neq 0$}. \nonumber\\
\label{Maintm_2}
\end{numcases}
where $B_1:=B\lc1,1-\frac{\rho_{1}}{H_0}\rc^{2H_0}$.
The remaining integration has the same form as the original one but with strictly less multiplicity.  We can then  use  the same argument
as above to  prove 
\[{\cI_{\vec{\bm{\rho}},\gamma_0}}\leq C^n<\infty.\] 
Thus, we have {for any $(t,x)\in\RR_+\times\RR^d$}
\begin{equation}\label{Result}
	\EE[u_n(t,x)^2]\leq C^n\cdot n!\cdot t^{n(|H|+2H_0-d)}<\infty\,.
\end{equation}

As a result,  we have proven \eqref{Result} under the conditions \eqref{cond.0<H<1} and we finish the proof of sufficiency in Theorem \ref{main_result1}. 


\subsection{Sufficient condition of   Theorem \ref{main_result1}: An alternative  proof }
In this subsection, we give another proof of the sufficient condition using some elementary computations in the Appendix \ref{appenA}. {Let us stress that Lemma \ref{lem_mainterm} is the key for the proofs here.}

As explained in the previous section, { we need to show that}
$\cI_1(s_2, r_2)$ defined by \eqref{e.def_I1} is
bounded  under the condition   $\al_1\not=0$
and $\cI_2(s_3, r_3)$ defined by \eqref{I_2(s3,r3)} is
bounded  under the condition   $\al_1 =0$ but $\al_2\not=0$. Recall that
\begin{equation*}
\begin{split}
\rho_i=&\frac{1}{2}(\frac32d+\frac12\be^*-2|H|-\al_i)
\\
= &
\begin{cases}
\frac{3}{4} d_*-H_*,  &\qquad \hbox{if $\al_i=0$}\\
\frac{d-|H|}{2},   &\qquad \hbox{if $\al_i=\frac12d- |H|+\frac12\be^*$}.\\
\end{cases}
\end{split}
\end{equation*}

\smallskip
\noindent\textbf{Case 1: $\al_1\not=0$}.  { In this case}, we apply Lemma \ref{lem_mainterm} to $\cI_1(s_2, r_2)$   with
\begin{eqnarray*}
  \mathfrak{\be}&=&0\,,\quad \ga_0=2-2H_0\,,\quad a=b=0\,,\quad A=s_2\,,\quad B=r_2 \,,\\
 \al&=&\rho_1=\frac{1}{2}(\frac32d+\frac12\be^*-2|H|-\al_1)\,,
\end{eqnarray*}
and $\al_1
 \in  \lt\{\frac12d- |H|+\frac12\be^*, d- 2|H|+\be^* \rt\}$. In order to use this lemma, we should require the following condition
\begin{align*}
\ga_0<2-\al\Leftrightarrow  H_*+2H_0>\frac34d_* ,
\end{align*}
which is implied by the third inequality \eqref{cond.0<H<1_c}.
Then,
\begin{equation}
\cI_1(s_2, r_2)\lesssim
s_2^{1-\rho_1-\frac{\ga_0}{2}} r_2^{1-\rho_1-\frac{\ga_0}{2}}+
s_2^{1-\rho_1-q_2} r_2^{1-\rho_1-q_2} |r_2-s_2|^{-\ga_0+2q_2}.
\label{e.cI_1est}
\end{equation}
The right-hand side of \eqref{e.cI_1est} is bounded if
\begin{equation}
\begin{cases}
1-\rho_1-\frac{\ga_0}{2}>0 \,;\\
-\ga_0+2q_2\ge 0
\,,
\end{cases} \label{e.case1}
\end{equation}
for some  $q_2\in (0,(1-\rho_1)\wedge \frac{\ga_0}{2}]$. Since $\alpha_1\not=0$, it follows that $\rho_1\le \frac12(d-|H|)$.
The first condition of \eqref{e.case1} is then equivalent to
$|H|+2H_0>d $,  which is the second inequality \eqref{cond.0<H<1_b}.   Since the first condition of \eqref{e.case1}  holds true, we have $\frac{\gamma_0}{2}<1-\rho_1$. Then we can    take $q_2=\frac{\gamma_0}{2}$ and  we see easily   that the second condition
 of \eqref{e.case1} is true.

\bigskip
\noindent\textbf{Case 2: $\al_1=0$ and $\al_2\not=0$}.  We shall use Lemma \ref{lem_mainterm} to bound $\cI_2(s_3, r_3)$ given by \eqref{I_2(s3,r3)}. Notice that
in this case we should  also integrate  $\cI_1(s_2, r_2)$ first to obtain
 \eqref{e.cI_1est}. But in this case since  $\al_1=0$, it follows \[\rho_1=\frac{1}{2}(\frac32d+\frac12\be^*-2|H|)=\frac12\lt(\frac32d_*-2H_*\rt).\]
In order to use Lemma \ref{lem_mainterm},
  we need $\rho_1<1$. This is equivalent to
\begin{equation}\label{alt_fir}
H_*>\frac34d_*-1
\end{equation}
which is the first inequality \eqref{cond.0<H<1_a}. Under this condition we can estimate $\cI_1(s_2, r_2)$ as
  \eqref{e.cI_1est}.

If $\frac{\ga_0}{2}\le 1-\rho_1$, then we can choose $q_2=\frac{\ga_0}{2}$ and $\cI_1(s_2, r_2)$ is also bounded.  Hence, in this case we may proceed as in the case $\al_1\not=0$.

If $\frac{\ga_0}{2} > 1-\rho_1$, namely,
\begin{equation}
H_0+H_*< \frac34 d_*\,, \label{e.cond_cI2}
\end{equation}
{then in this case} we choose $q_2=1-\rho_1$ and we have to consider  the  integral $\cI_2(s_3,r_3)$ given by \eqref{I_2(s3,r3)}.   We integrate $s_1$ and $r_1$ first. By \eqref{e.cI_1est}, we have
\begin{align}
\cI_2  (s_3, r_3)\le &\int_{{0< s_2<s_3 \atop 0 <r_2<r_3}}     |s_{3}-s_{2}|^{-\rho_{2}}|r_{3}-r_{2}|^{-\rho_{2}}(s_2 r_2)^{1-\rho_1-\frac{\ga_0}{2}} |s_2-r_2|^{-\ga_0}ds_2dr_2\nonumber\\
&+ \int_{{0< s_2<s_3 \atop 0 <r_2<r_3}}|s_{3}-s_{2}|^{-\rho_{2}}|r_{3}-r_{2}|^{-\rho_{2}}
|s_2-r_2|^{-2\ga_0+2-2\rho_1} ds_2dr_2 \nonumber\\
&=:\cI_{2,1}  (s_3, r_3)+\cI_{2,2}  (s_3, r_3)\,,
\end{align}
where we {recall that $\ga_0=2-2H_0$},
\[
\rho_2=\frac{1}{2}(\frac32d+\frac12\be^*-2|H|-\al_2)
 \quad\hbox{and}\quad \al_2
 \in  \lt\{\frac12d- |H|+\frac12\be^*, d- 2|H|+\be^* \rt\}\,.
 \]
 To use Lemma \ref{lem_mainterm} for $\cI_{2,1}  (s_3, r_3)$, we need
\[
{1-\rho_1-\frac{\ga_0}{2}}>-1\Leftrightarrow H_*+H_0>\frac34d_*-1\,,
\]
which is implied by the first condition \eqref{cond.0<H<1_a}. Similarly, for the term $\cI_{2,2}  (s_3, r_3)$,  we need
\[
-2\ga_0 +2-2\rho_1>-1\,.
\]
This condition is equivalent to
\begin{equation}\label{alt_third}
2H_0+H_*>\frac34 d_*+\frac12  \,,
\end{equation}
which is the third condition \eqref{cond.0<H<1_c}.

Now we  apply Lemma \ref{lem_mainterm}  to  $\cI_{2,1}  (s_3, r_3)$   with
  \[\al =\rho_2, \ \be =-(1-\rho_1-\frac{\ga_0}{2}), \ a=b=0,\  A=s_3 \hbox{ and } B=r_3.\]
It is clear that the following condition
 \begin{align*}
 &\ga_0<2-\be -(\be \vee \al )=2+1-\rho_1-\frac{\ga_0}{2}-(-1+\rho_1+\frac{\ga_0}{2})\vee\rho_2\\
& \Leftrightarrow\ {H_*+2H_0>\frac34d_* \hbox{ and } |H|
+2H_*+6H_0>d+\frac32d_*},
 \end{align*}
 holds under the third and {fourth inequalities} \eqref{cond.0<H<1_d}. Thus,
 we have for $\bar q_2\in[\be +\ga_0-1,(1-\al )\wedge \frac{\ga_0}{2}]=[-2+\rho_1+\frac32\ga_0,(1-\rho_2 )\wedge\frac{\ga_0}{2}]$,
\begin{align}\label{est_cI_1}
\cI_{2,1}  (s_3, r_3)\lesssim & |s_3|^{-\rho_2+(1-\rho_1-\frac{\ga_0}{2})+1-  \frac{\ga_0}{2}}|r_3|^{-\rho_2+(1-\rho_1-\frac{\ga_0}{2})+1-  \frac{\ga_0}{2}} \nonumber\\
   &+   |s_3|^{-\rho_2+(1-\rho_1-\frac{\ga_0}{2})+1-  \bar q_2}|r_3|^{-\rho_2+(1-\rho_1-\frac{\ga_0}{2})+1-  \bar q_2}  |s_3-r_3 |^{-\ga_0+2\bar q_2}.
\end{align}
Noticing that under the second inequality $|H|+2H_0>d$ of condition \eqref{cond.0<H<1}, we have $1-\rho_2>\frac{\ga_0}{2}$. Taking $\bar q_2=\frac{\ga_0}{2}$, $\cI_{2,1}  (s_3, r_3)$ is bounded if
\begin{align}\label{con_cI_21}
{-\rho_2+(1-\rho_1-\frac{\ga_0}{2})+1-  \frac{\ga_0}{2}}>0\Leftrightarrow {|H|+2H_*+4H_0>d+\frac32 d_*}.
\end{align}
We can see that the above first inequality is exactly \eqref{cond.0<H<1_d}.

{ Next, we apply} Lemma \ref{lem_mainterm}  to  $\cI_{2,2}  (s_3, r_3)$ with
\[\al =\rho_2,\ \be =0,\ \tilde\ga=2\ga_0-2(1-\rho_1),\ a=b=0,\ A=s_3 \hbox{ and } B=r_3.\]
Thus, under the condition
 \begin{align*}
 &\tilde\ga<2-\be -(\be \vee \al )=2-\rho_2 \\
& \Leftrightarrow\ |H|+4H_*+8H_0>d+3d_*,
 \end{align*}
 which is implied by the third and fourth inequalities of \eqref{cond.0<H<1},
we have
\begin{align}
\cI_{2,2}  (s_3, r_3)\lesssim & |s_3|^{-\rho_2+1-  \frac{\tilde\ga}{2}}|r_3|^{-\rho_2+1-  \frac{\tilde\ga}{2}} \nonumber\\
   &+   |s_3|^{-\rho_2+1-  \tilde q_2}|r_3|^{-\rho_2+1-  \tilde q_2}  |s_3-r_3 |^{-\tilde\ga+2\tilde q_2},
\end{align}
where $\tilde q_2\in[\be+\tilde\ga-1,(1-\al)\wedge \frac{\tilde\ga}{2}]=[2\ga+2\rho_1-3, (1-\rho_2)\wedge\frac{\tilde\ga}{2}]$.
Notice  that under the fourth inequality \eqref{cond.0<H<1_d}, we have $1-\rho_2> \frac{\tilde\ga}{2}$.  Thus, letting $\tilde q_2=\frac{\tilde\ga}{2}$, $\cI_{2,2}  (s_3, r_3)$ is bounded.


As a result, by Lemma \ref{lem_mainterm} in Appendix, we have proved  that under conditions \eqref{cond.0<H<1}, $\cI_1(s_2, r_2)$  and $\cI_2(s_3, r_3)$ are bounded. Then, in a similar argument to that in   Section \ref{sufficient}, we see that the conditions \eqref{cond.0<H<1} are sufficient for $\EE[u_n(t,x)^2]<+\infty$ for any $n\ge  1$.

\subsection{Proof of Theorem \ref{main_result1}: Necessity}
{In this subsection
 We shall show   the necessity of  condition \eqref{cond.0<H<1}.} Focusing on $H_0>1/2$ firstly, we shall prove the four inequalities in \eqref{cond.0<H<1} separately.


\medskip
\noindent \textbf{The necessity  of condition \eqref{cond.0<H<1_b}.} This is in fact  a consequence of
\cite{ams19_huliutindel}. But we shall give a simpler proof due 
to our special situation.  We only need to focus on the first chaos expansion $u_1(t,x)$. Namely, {for any $(t,x)\in\RR_+\times\RR^d$}
\begin{align*}
\EE&[u_1(t,x)^2]=\EE[I_1(f_1)^2]\\
&=\int_{[0,t]^2}\int_{\RR}e^{-\frac12 (t-s_1+t-r_1)|\xi_1|^2}\times\prod_{k=1}^d|\xi_{1k}|^{1-2H_k}\gamma_0(s_1-r_1)d\xi_1ds_1dr_1\\
&=\int_{[0,t]^2}\prod_{k=1}^dh_k(s_1,r_1)\gamma_0(s_1-r_1)ds_1dr_1\,, 
\end{align*}
where $h_k(s_1,r_1)=\int_{\RR}e^{-\frac12 (t-s_1+t-r_1)|\xi_{1k}|^2}\times |\xi_{1k}|^{1-2H_k}d\xi_{1k}.$ It is not difficult to verify that $$h_k(s_1,r_1)\geq C(t-s_1+t-r_1)^{H_k-1}.$$
Thus, taking $\widetilde{s}_1=t-s_1$, $\widetilde{r}_1=t-r_1$,  $\widetilde{v}=\widetilde{r}_1-\widetilde{s}_1$ and $ \widetilde{w}=\widetilde{s}_1+\widetilde{r}_1$
\begin{align*}
\EE[u_1(t,x)^2]&\geq C \int_{[0,t]^2}(\widetilde{s}_1+\widetilde{r}_1)^{|H|-d} \cdot|\widetilde{s}_1-\widetilde{r}_1|^{2H_0-2}ds_1dr_1\\
&\geq C \int_{0}^t\int_0^{\widetilde{w}}|\widetilde{w}|^{|H|-d}\cdot|\widetilde{v}|^{2H_0-2}d\widetilde{v}d\widetilde{w}+\int_t^{2t}\int_0^{2t-\widetilde{w}}|\widetilde{w}|^{|H|-d}\cdot|\widetilde{v}|^{2H_0-2}d\widetilde{v}d\widetilde{w}\\
&\geq C\int_{0}^t|\widetilde{w}|^{|H|-d+2H_0-1}d\widetilde{w}.
\end{align*}
Accordingly, $\EE[u_1(t,x)^2]<+\infty$ happens only if
\[
|H|-d+2H_0-1>-1\Leftrightarrow |H|+2H_0>d.
\]

\medskip
\noindent \textbf{The necessity of conditions \eqref{cond.0<H<1_c} and \eqref{cond.0<H<1_d}.}
We shall show the necessity by means of the finiteness of  $\EE[u_2(t,x)^2]$ {for any $(t,x)\in\RR_+\times\RR^d$}.

%
{By definition of $h_{k,n}(\vbfs,\vbfr)$ with $n=2$ in \eqref{hk2},} it follows  that 
\begin{align*}
h_{k,2}(\vbfs,\vbfr)&=c_{H_k}^n
 \lt(\prod_{i=1}^2 \widetilde w_i^{-1/2}\rt)
 \EE \lt[ \prod_{i=1}^2 \left|\frac{X_i}{\sqrt{\widetilde w_i}}-\frac{X_{i-1}}{\sqrt{\widetilde w_{i-1}}} \right|^{1-2H_k}\rt] \,,\\
&= c_{H_k}^n \widetilde w_2^{H_k-1}
 \widetilde w_1^{2H_k-3/2} (\widetilde w_2+\widetilde w_1)^{\frac12- H_k} \nonumber\\
&\qquad \quad  \times   \EE\left[ \left|  X_1  \right| ^{1-2H_k}
\left| \sqrt{\frac{\widetilde w_{1}}{\widetilde w_{1}+\widetilde w_2} }X_2 - \sqrt{\frac{\widetilde w_{2 }}{\widetilde w_{1}+\widetilde w_2} }X_1 \right|^{1-2H_k}
\right]
\end{align*}
where $\widetilde w_1= s_2-s_1+r_2-r_1$ and $\widetilde w_2=t-s_2+t-r_2$.
{Recalling $\cJ_{k,2}(\widetilde\la_1,\widetilde\la_2)$ defined in \eqref{I_k} with $n=2$, and letting $X_0=0$, $\widetilde\la_1=1$ and $\widetilde\la_2:=\sqrt{\frac{\widetilde w_{1}}{\widetilde w_{1}+\widetilde w_2} }$, one has 
\begin{align*}
\cJ_{k,2}(\widetilde\la_1,\widetilde\la_2)&= \EE\left[ \prod_{i=1}^{2}
\left|{\widetilde\la_i }X_i-\sqrt{1-\widetilde\la_i^2 }X_{i-1} \right|^{1-2H_k}\right]\\
&=\EE\left[ \left|  X_1  \right|^{1-2H_k}
\left| \sqrt{\frac{\widetilde w_{1}}{\widetilde w_{1}+\widetilde w_2} }X_2 - \sqrt{\frac{\widetilde w_{2 }}{\widetilde w_{1}+\widetilde w_2} }X_1 \right|^{1-2H_k}
\right]. 
\end{align*}
It follows from \eqref{e.3.21lower} that
\[\cJ_{k,2}(\widetilde\la_1,\widetilde\la_2)\ge c_{H_k}\widetilde\la_2^{-(4H_k-3)}.\]
 }
{Since $2H_*-\frac32 d_*<0$ and $(\tilde w_2+\tilde w_1)^{|H|-d}>\tilde w_1^{2H_*-\frac32 d_*}(\tilde w_2+\tilde w_1)^{|H|-d-2H_*+\frac32 d_*}$, we have
\begin{align}\label{lower_u_2}
&\EE[u_2(t,x)^2]\geq c_{H_k} \int_{0<s_1<s_2<t\atop 0< r_1<r_2<t}
 \prod_{k=1}^d h_{k,2} (\vbfs,\vbfr)
\prod_{i=1}^2 \gamma_0(s_i-r_i) ds_1dr_1ds_2dr_2\nonumber\\
&=c_{H_k}\int_{0<s_1<s_2<t\atop 0< r_1<r_2<t}
  |t-s_2+t-r_2|^{|H|-d}|t-s_1+t-r_1|^{|H|-d}
\prod_{i=1}^2 \gamma_0(s_i-r_i) ds_idr_i\nonumber\\
&\geq c_{H_k}\int_{0<s_1<s_2<t\atop 0<r_1<r_2<t}|t-s_2+t-r_2|^{|H|-d}|s_2-s_1+r_2-r_1|^{2H_*-\frac32 d_*}\nonumber\\
&\quad\times|t-s_1+t-r_1|^{|H|-2H_*-d+\frac32 d_*}\times|s_1-r_1|^{2H_0-2}|s_2-r_2|^{2H_0-2}ds_1dr_1ds_2dr_2.
\end{align}
}
%

Denote the { integral domain $\mathbf{I_1}:=\lt\{(s_1,r_1,s_2,r_2):0<s_1<r_1<\frac t2<s_2<r_2<t\rt\}$}. Taking $\tilde{v}= {r}_1- {s}_1$, $\tilde{x}=s_2-r_1$, $\tilde{y}=r_2-s_2$ and $\tilde{z}=t-r_2$
yields
\begin{align*}
\EE[u_2(t,x)^2]&\geq c_{H} \int_{\mathbf{I_1}}|t-s_2+t-r_2|^{|H|-d}|s_2-s_1+r_2-r_1|^{2H_*-\frac32 d_*}\nonumber\\
&\qquad\qquad\qquad\times|s_1-r_1|^{2H_0-2}|s_2-r_2|^{2H_0-2}ds_1dr_1ds_2dr_2\nonumber\\
&\geq c_{H} \int_{0<\tilde{v},\tilde{x},\tilde{y},\tilde{z}<\frac t2}|\tilde{y}+2\tilde{z}|^{|H|-d}|\tilde{v}+2\tilde{x}+\tilde{y}|^{2H_*-\frac32 d_*}\nonumber\\
&\qquad\qquad\qquad\qquad\qquad\times|\tilde{v}|^{2H_0-2}|\tilde{y}|^{2H_0-2}d\tilde{v}d\tilde{x}d\tilde{y}d\tilde{z}\nonumber\\
&\geq c_{H} \int_{0<\widetilde{x},\widetilde{y},\widetilde{z}<\frac t2}|\tilde{y}+2\tilde{z}|^{|H|-d}|2\tilde{x}+\tilde{y}|^{2H_*-\frac32 d_*+2H_0-1}|\tilde{y}|^{2H_0-2}d\tilde{x}d\tilde{y}d\tilde{z},
\end{align*}
where in the last inequality, we used  Lemma \ref{l.3.2}.} {By integrating $\tilde z$, we have
\begin{align}\label{lower1_u_2}
\EE[u_2(t,x)^2]&\geq c_{H} \int_{0<\widetilde{x},\widetilde{y}<\frac t2}\lt(|\tilde{y}+t|^{|H|-d+1}-|\tilde y|^{|H|-d+1}\rt) \nonumber \\
&\qquad\quad \cdot |2\tilde{x}+\tilde{y}|^{2H_*-\frac32 d_*+2H_0-1}|\tilde{y}|^{2H_0-2}d\tilde{x}d\tilde{y}.
\end{align} 
From the above, we can see that for the two cases $|H|- d+1> 0$ and $|H|- d+1<0$, the terms that really matter are different. Hence, to study the boundedness of the above integral, we shall 
discuss the two cases $|H|- d+1> 0$ and $|H|- d+1<0$
separately.

{When $|H|- d+1> 0$, it holds that $|\tilde{y}+t|^{|H|-d+1}\geq 
	|\tilde{y}+2\tilde y|^{|H|-d+1}\geq 3^ {|H|-d+1} |\tilde y|^{|H|-d+1}$. So there exists a constant  which  may depend  on $H$: $c_H\in(0,1)$ such that $|\tilde{y}+t|^{|H|-d+1}- |\tilde y|^{|H|-d+1}\geq c_{H}|\tilde{y}+t|^{|H|-d+1}$. Then from \eqref{lower1_u_2} we have}
\begin{align*}
\EE[u_2(t,x)^2]&\geq c_{H} \int_{0<\widetilde{x},\widetilde{y}<\frac t2}|\tilde{y}+t|^{|H|-d+1}|2\tilde{x}+\tilde{y}|^{2H_*-\frac32 d_*+2H_0-1}|\tilde{y}|^{2H_0-2}d\tilde{x}d\tilde{y}\\
&\geq c_{H} t^{|H|-d+1} \int_{0<\widetilde{x},\widetilde{y}<\frac t2}|2\tilde{x}+\tilde{y}|^{2H_*-\frac32 d_*+2H_0-1}|\tilde{y}|^{2H_0-2}d\tilde{x}d\tilde{y}\\
&\geq c_{H} \int_{0<\widetilde{x}<\frac t2}|\tilde{x}|^{2H_*-\frac32 d_*+4H_0-2}d\tilde{x},
\end{align*}
where the last inequality is due to Lemma \ref{l.3.2}. Thus, $\EE[u_2(t,x)^2]<+\infty$ happens in this case only if
\[
2H_*-\frac32 d_*+4H_0-2>-1\Leftrightarrow H_*+2H_0>\frac{3d_*}{4}+\frac12.
\]
This implies the third inequality  in \eqref{cond.0<H<1}.
We add  $|H|\geq d-1$ and $2H_*+4H_0>\frac{3d_*}{2}+1$  to obtain  $|H|+2H_*+4H_0>d+\frac32 d_*$.  This is the fourth
condition in \eqref{cond.0<H<1}.

{When $|H|- d+1< 0$, it holds that $|\tilde{y}+t|^{|H|-d+1}\le 3^ {|H|-d+1} |\tilde y|^{|H|-d+1}$. So there exists  a constant   depending  on $H$: $c_H\in(0,1)$ such that $|\tilde y|^{|H|-d+1}-|\tilde{y}+t|^{|H|-d+1}\geq c_H|\tilde{y}+t|^{|H|-d+1}$. Then from \eqref{lower1_u_2} and  Lemma \ref{l.3.2}} it follows 
\begin{align*}
\EE[u_2(t,x)^2]&\geq c_{H}\int_{0<\widetilde{x},\widetilde{y}<\frac t2}|\tilde{y}|^{|H|-d+1}|2\tilde{x}+\tilde{y}|^{2H_*-\frac32 d_*+2H_0-1}|\tilde{y}|^{2H_0-2}d\tilde{x}d\tilde{y}\\
&\geq c_{H} \int_{0<\widetilde{x}<\frac t2}|\tilde{x}|^{|H|-d+1+2H_*-\frac32 d_*+4H_0-2}d\tilde{x}.
\end{align*}
Therefore, $\EE[u_2(t,x)^2]<+\infty$ happens in this case $|H|<d-1$ only if
\[
|H|-d+1+2H_*-\frac32 d_*+4H_0-2>-1\Leftrightarrow |H|+2H_*+4H_0>d+\frac32 d_*.
\]
Subtracting $|H|<d-1$ from $|H|+2H_*+4H_0>d+\frac32 d_*$  implies  that $H_*+2H_0>\frac{3d_*}{4}+\frac12$. This is the third condition in  \eqref{cond.0<H<1}.


\medskip
\noindent \textbf{The necessity of condition \eqref{cond.0<H<1_a}.}
   We shall still use  \eqref{lower1_u_2}:
   \begin{align*}
\EE[u_2(t,x)^2]&\geq c_{H} \int_{\mathbf{I_1}}|t-s_2+t-r_2|^{|H|-d}|s_2-s_1+r_2-r_1|^{2H_*-\frac32 d_*}\nonumber\\
&\qquad\qquad\qquad\times|s_1-r_1|^{2H_0-2}|s_2-r_2|^{2H_0-2}ds_1dr_1ds_2dr_2.
\end{align*}
{Assuming  $H_*\leq (\frac34 d_*-1)$, we shall derive that $\EE[u_2^2(t,x)]=\infty$, which contradicts with $\EE[u_n(t,x)^2  ]<\infty$ when $n\geq 1$.}  Let us consider the {integral domain $\mathbf{I_2}=\{\frac t4<r_1<r_2<\frac t3<\frac34 t<s_1<s_2<\frac56 t\}$}.  We have
\begin{align*}
\EE[u_2(t,x)^2]&\geq c_{H}\int_{\mathbf{I_1}\cap\mathbf{I_2}}|s_2-s_1+r_2-r_1|^{2H_*-\frac32 d_*}ds_1dr_1ds_2dr_2.
\end{align*}
Integrating $r_1$ first and then integrating $r_2$ yield
\begin{align*}
\int_{\frac t4<r_1<r_2<\frac t3}& |s_2-s_1+r_2-r_1|^{2H_*-\frac32 d_*}dr_1dr_2\\
=&-\frac{t}{12}(2H_*-\frac32 d+1)^{-1}|s_2-s_1|^{2H_*-\frac32 d_*+1}\\
&+(2H_*-\frac32 d_*+1)^{-1}(2 H_*-\frac32 d_*+2)^{-1}|s_2-s_1+\frac{t}{12}|^{2H_*-\frac32 d_*+2}\\
&-(2H_*-\frac32 d_*+1)^{-1}(2H_*-\frac32 d_*+2)^{-1}|s_2-s_1|^{2H_*-\frac32 d_*+2}\\
\ge &-\frac{t}{12}(2H_*-\frac32 d+1)^{-1}|s_2-s_1|^{2H_*-\frac32 d_*+1}  \\
&-(2H_*-\frac32 d_*+1)^{-1}(2H_*-\frac32 d_*+2)^{-1}|s_2-s_1|^{2H_*-\frac32 d_*+2} \,,
\end{align*}
where the inequality holds because we drop a positive term.  
Since $H_*\leq (\frac34 d_*-1)$, we have $2H_*-\frac32 d_*+1\leq-1$. Thus,  we see
\begin{equation*}
\int_{\frac34 t<s_1<s_2<\frac56 t}|s_2-s_1|^{2H_*-\frac32 d_*+1}ds_1ds_2= \infty,
\end{equation*}
which contradicts  with $\EE[u_n(t,x)^2  ]<\infty$ ($n\geq 1$). This says that  the condition $H_*>(\frac34 d_*-1)$ is necessary.

 When $H_0=1/2$, the lower bound of $\EE[u_2(t,x)^2]$ in \eqref{lower_u_2} becomes
\begin{align}\label{lowH0=1/2}
\EE[u_2(t,x)^2]
&\geq c_H\int_{0<s_1<s_2<t}|t-s_2|^{|H|-d}|s_2-s_1|^{2H_*-\frac32 d_*}\nonumber\\
&\qquad\qquad\qquad\qquad\times|t-s_1|^{|H|-2H_*-d+\frac32 d_*}ds_1ds_2\nonumber\\
&=c_H \int_{0<\widetilde{x},\widetilde{z}<\frac t2}|\tilde{z}|^{|H|-d}|\tilde{x}|^{2H_*-\frac32 d_*}|\tilde{x}+\tilde{z}|^{|H|-2H_*-d+\frac32 d_*}d\tilde{x}d\tilde{z}.
\end{align}
The right-hand side of  \eqref{lowH0=1/2} is integrable only if
\begin{equation*}
\begin{cases}
|H|-d>-1,\\
2H_*-\frac32 d_*>-1,\\
|H|-2H_*-d+\frac32 d_*>-1,
\end{cases}
\end{equation*}
which amounts to
\begin{equation*}
\begin{cases}
|H|-d>-1,\\
H_*>\frac34 d_*-\frac12.
\end{cases}
\end{equation*}
Moreover, applying Lemma \ref{l.3.2} to \eqref{lowH0=1/2}, we have
\begin{align*}
\EE[u_2(t,x)^2]
&\geq c_H \int_{0<\widetilde{z}<\frac t2}|\tilde{z}|^{|H|-d}|\tilde{z}|^{|H|-d+1}d\tilde{z}.
\end{align*}
Thus, $\EE[u_2(t,x)^2]<+\infty$ implies the integral on the right-hand side of the above inequality is finite, which happens only if
\[|H|-d+|H|-d+1>-1\Leftrightarrow |H|>d-1.\]
Therefore, when $H_0=1/2$, { we proved that} the sufficient condition \eqref{H0=1/2} is also necessary.

As a result, we prove that all the conditions in \eqref{cond.0<H<1} are necessary for $\EE[u_n(t,x)^2]<\infty$ for fixed $n> 1$ and $(t,x)\in\RR_+\times\RR^d$. This completes the proof of Theorem \ref{main_result1}.

\section{Convergence of the It\^o-Wiener chaos expansion}\label{Sec5.Conv}


In this section, we prove Theorem \ref{thm_conver}. The result is visualized in Figure \ref{Fig.Improve}. More precisely, the existing condition $H+H_0>\frac 34$ (yellow shaded region in the left figure)  is expanded to encompass the yellow shaded region in the right figure.

\begin{figure}[H]
	\centering
	\includegraphics[width=0.8\textwidth]{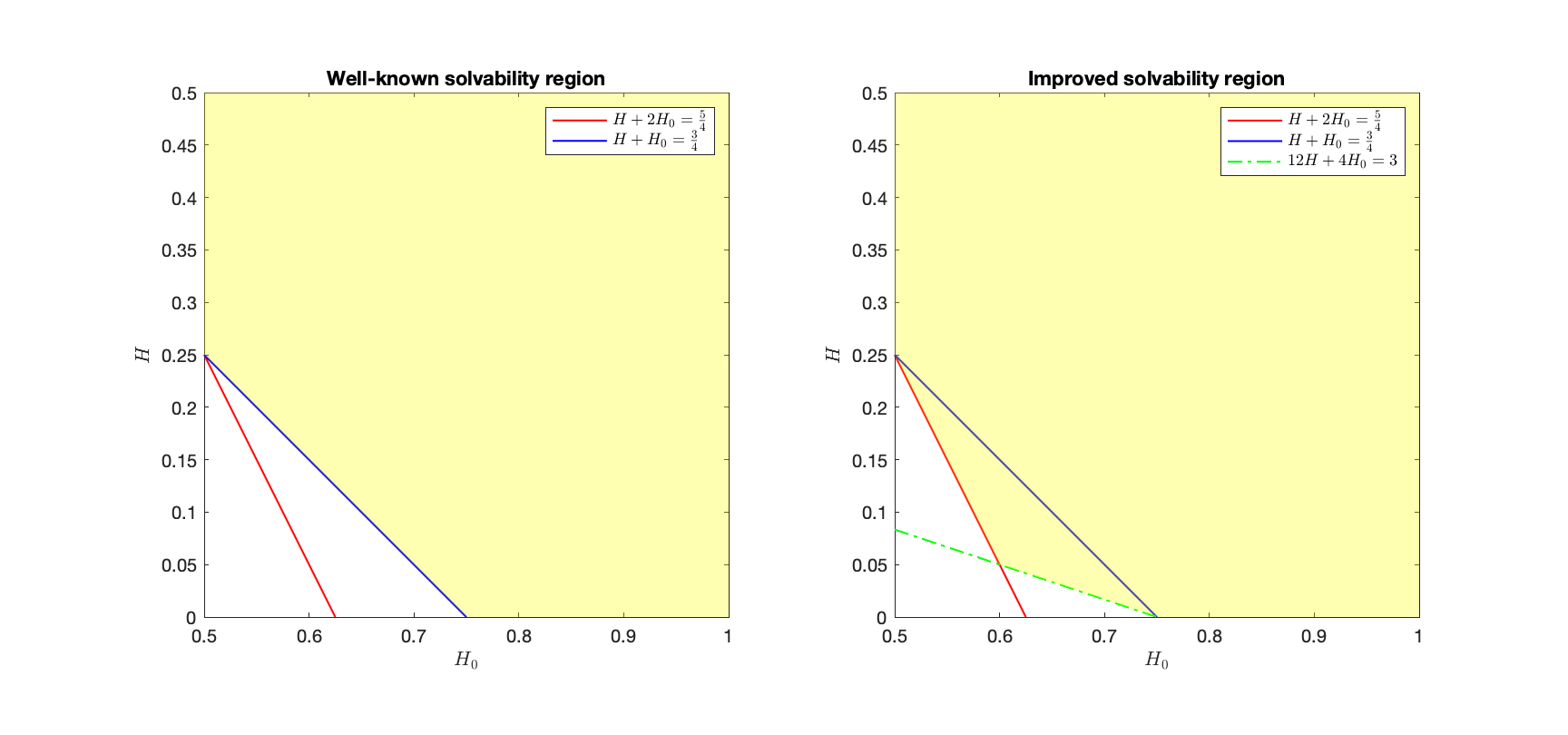}
	\caption{Solvability regions}
	\label{Fig.Improve}
\end{figure}
\begin{proof}[Proof of Theorem \ref{thm_conver}]
For any    permutation   $\sigma$   of $\{1, \cdots, n \}$, recall the notation  $\bT_1^{\sigma}$ given by \eqref{Tsr_sig} and
\begin{align}
	\widehat{f}_{n,\sigma}^{(t,x)}(\vbfs,\bm{\eta}):=\widehat f_{n,\sigma}^{(t,x)}(s_1,\xi_1,\cdots,s_n,\xi_n) &=\prod_{i=1}^n e^{-\frac 12(s_{\sigma(i+1)}-s_{\sigma(i)})|\eta_i^{\sigma}|^2}e^{-\iota x \eta_n^{\sigma}} \1_{\sigma}(\vbfs) \,, \label{Def:hatf_n}
\end{align}
with the notations $\bm{\eta}:=(\eta_1,\cdots,\eta_n)$, $\1_{\sigma}(\vbfs):={\1_{\{\vbfs\in\bT_1^{\sigma}\}}}=\1_{\{0<s_{\sigma(1)}<\cdots<s_{\sigma(n)}<1\}}(\vbfs)$, $\eta_i^{\sigma}:=\xi_{\sigma(1)}+\cdots+\xi_{\sigma(i)}$. {And we set $\widehat f_{n}^{(t,x)}(\cdot):=\widehat f_{n,\sigma}^{(t,x)}(\cdot)$ if $\sigma$ is the natural permutation.} The notations involved  with the variable $\vbfr$ are defined similarly.

{ By applying change of variables $\eta_j=\xi_1+\cdots+\xi_{j}$ and triangle inequality, we have by expanding the   product
\begin{align*}
	\prod_{j=1}^n|\xi_j|^{1-2H}&=\prod_{j=1}^n|\eta_j-\eta_{j-1}|^{1-2H} \\
	&\leq \prod_{j=1}^n (|\eta_j|+|\eta_{j-1}|)^{1-2H} \leq C_H \sum_{\bm{\pi}} \prod_{j=1}^n|\eta_j|^{(1-2H)\pi_j}\,, 
\end{align*}
where $\bm{\pi}=(\pi_1,\cdots,\pi_n)$ with $\pi_j\in\{0,1,2\}$ and $|\bm{\pi}|:=\sum_{i=1}^n \pi_i=n-1$ can be thought as one dimensional  simplified version of \eqref{Def:al_i}}. Then, it is not hard  to see that by the change of variables described before
\begin{align*}
I_{n }:= 	&\|\frac{1}{n!} \sum_{\sigma} f_{n,\sigma}^{(t, x)}(\cdot)\|^2_{\HH^{\otimes n}}\nonumber \\
	=&\frac{1}{(n!)^2} \int_{\RR_+^{2n}} \int_{\RR^{n}} \sum_{\sigma} \widehat{f}_{n,\sigma}^{(t,x)}(\vbfr,\bm{\xi})\cdot \sum_{\varsigma}\overline{\widehat{f}_{n,\varsigma}^{(t,x)}(\vbfs,\bm{\xi})} \prod_{j=1}^n|\xi_j|^{1-2H} \prod_{i=1}^n|s_i-r_i|^{-\gamma_0} d\bm{\xi}d\vec{\bfs} d\vec{\bfr} \nonumber\\
	\leq& C_H \sum_{\bm{\pi}} \int_{\RR_+^{2n}} \int_{\RR^{n}} \frac{1}{n!}\sum_{\sigma} \widehat{f}_{n,\sigma}^{(t,x)}(\vbfr,\bm{\eta})\cdot \frac{1}{n!}\sum_{\varsigma}\overline{\widehat{f}_{n,\varsigma}^{(t,x)}(\vbfs,\bm{\eta})} \\
	&\qquad\qquad\qquad\qquad\qquad\qquad\qquad\qquad \times \prod_{j=1}^n|\eta_j|^{(1-2H)\pi_j} \prod_{i=1}^n|s_i-r_i|^{-\gamma_0} d\bm{\eta}d\vec{\bfs} d\vec{\bfr} \,,
\end{align*}
where $d\bm{\eta}=d\eta_1\cdots d\eta_n$ and $\sigma, \varsigma$ are two permutations. 
Let us set
  \begin{align*}
	\psi_{n } (\vbfr,\vbfs;\bm{\eta})
	:=   \frac{1}{n!} \sum_{\sigma} \widehat{f}_{n,\sigma}^{(t,x)}(\vbfr,\bm{\eta})\cdot \frac{1}{n!}\sum_{\varsigma}\overline{\widehat{f}_{n,\varsigma}^{(t,x)}(\vbfs,\bm{\eta})}
\end{align*}
and
\begin{align*}
	\Psi_{n,\bm{\tau}}(\vbfr,\vbfs):=&\int_{\RR^{n}}
	\psi_{n } (\vbfr,\vbfs;\bm{\eta}) \prod_{j=1}^n|\eta_j|^{\tau_j} d\bm{\eta}
\end{align*}
with indices $\bm{\tau}=(\tau_1,\cdots,\tau_n)$ { depending on $\bm{\pi}$} to be chosen later.    By H\"older's  inequality with $\frac{1}{p}+\frac{1}{q}=1$, $k+k'=1$ and $m+m'=1$, we have
\begin{align}\label{def_j1j2}
I_n 	\leq& C_H\sum_{\bm{\pi}}\bigg|\int_{\RR_+^{2n}}\Psi_{n,\bm{\tau}}(\vbfr,\vbfs)\prod_{i=1}^n|s_i-r_i|^{-pm\gamma_0} d\vec{\bfs} d\vec{\bfr} \bigg|^{\frac{1}{p}}\nonumber\\
&\qquad \times\bigg|\int_{\RR_+^{2n}}\Psi_{n,\bm{\tau'}}(\vbfr,\vbfs)\prod_{i=1}^n|s_i-r_i|^{-qm'\gamma_0} d\vec{\bfs} d\vec{\bfr} \bigg|^{\frac{1}{q}}\nonumber\\
	=:&C_H {\sum_{\bm{\pi}} \cJ_1(p,m,k,\bm{\pi})\times \cJ_2(q,m',k',\bm{\pi})}\,,
\end{align}
with { $\cJ_1, \cJ_2$ depend on the $\bm{\tau}$, which is determined by $\bm{\pi}$ as given by
\begin{align*}
	\tau_{j}=kp(1-2H)\pi_j\quad \text{and}\quad \tau'_{j}=k'q(1-2H)\pi_j
\end{align*}
depending on $p,q,k,k'$ will be chosen later.} 
We will apply the combination of the H\"older-Young-Brascamp-Lieb inequality and the Hardy-Littlewood-Sobolev inequality to {$\cJ_1(p,m,k,\bm{\pi})$} as in Section \ref{sufficient}, and will apply solely the Hardy-Littlewood-Sobolev inequality to {$\cJ_2(q,m',k',\bm{\pi})$}. 
{ As we noticed in Section \ref{sufficient}, the condition of using H\"older-Young-Brascamp-Lieb inequality is weaker than the one of using Hardy-Littlewood-Sobolev. But the bound for $I_n$ obtained by applying the combination of the H\"older-Young-Brascamp-Lieb inequality and the Hardy-Littlewood-Sobolev inequality is 
less precise in terms of $n$, 
which   ensures  the finiteness under more general condition. The application of 
Hardy-Littlewood-Sobolev inequality to the multiple integral, on the other hand,   will produce  a more explicit decay factor of the form  $\frac{1}{(n!)^\kappa}$ with certain $\kappa>0$,   which is key for guaranteeing the convergence of \eqref{Fin_Result}.}

\bigskip
\noindent\textbf{Step 1: Estimate $\cJ_1(p,m,k,\bm{\pi})$.} We shall use the H\"older-Young-Brascamp-Lieb  inequality to show {$\cJ_1:=\cJ_1(p,m,k,\bm{\pi})\leq C^n$}.  {First, we have
\begin{align*}
	\psi_{n }(\vbfr,\vbfs;\bm{\eta})
	\leq& \frac{1}{(n!)^2}\sum_{\sigma,\varsigma} |\widehat{f}_{n,\sigma}^{(t,x)}(\vbfr,\bm{\eta})\cdot\overline{\widehat{f}_{n,\varsigma}^{(t,x)}(\vbfs,\bm{\eta})}|\,,
\end{align*}
for any permutations (of $\{1,\cdots,n\}$) $\sigma, \varsigma$ and consequently we get
\begin{align}
	|\cJ_1|^p =& \int_{\RR_+^{2n}}\Psi_{n,\bm{\tau}}(\vbfr,\vbfs)\prod_{j=1}^n|s_j-r_j|^{-pm\gamma_0} d\vec{\bfs} d\vec{\bfr} \nonumber  \\
	\leq& \frac{1}{(n!)^2}\sum_{\sigma,\varsigma}\int_{\RR_+^{2n}}\int_{\RR^{n}}|\widehat{f}_{n,\sigma}^{(t,x)}(\vbfr,\bm{\eta})\cdot\overline{\widehat{f}_{n,\varsigma}^{(t,x)}(\vbfs,\bm{\eta})}|  \prod_{j=1}^n|\eta_j|^{\tau_{j}} |s_j-r_j|^{-pm\gamma_0}d\bm{\eta} d\vec{\bfs} d\vec{\bfr}\,. \label{Ineq:J1^p}
\end{align}
}
An application  of { the Cauchy-Schwarz inequality} to \eqref{Ineq:J1^p}  yields
\begin{align}
	|\cJ_1|^p \leq& \frac{1}{(n!)^2}\sum_{\sigma,\varsigma} \bigg(\int_{\RR_+^{2n}} \int_{\RR^{n}}  |\widehat{f}_{n,\sigma}^{(t,x)}(\vbfr,\bm{\eta}) \overline{\widehat{f}_{n,\sigma}^{(t,x)}(\vbfs,\bm{\eta})}| \prod_{j=1}^n|\eta_j|^{\tau_{j}} |s_j-r_j|^{-pm\gamma_0} d\bm{\eta}d\vec{\bfs} d\vec{\bfr} \bigg)^{\frac 12} \nonumber\\
	&\qquad\quad  \times \bigg(\int_{\RR_+^{2n}} \int_{\RR^{n}}  |\widehat{f}_{n,\varsigma}^{(t,x)}(\vbfr,\bm{\eta}) \overline{\widehat{f}_{n,\varsigma}^{(t,x)}(\vbfs,\bm{\eta})}| \prod_{j=1}^n|\eta_j|^{\tau_{j}} |s_j-r_j|^{-pm\gamma_0} d\bm{\eta}d\vec{\bfs} d\vec{\bfr} \bigg)^{\frac 12}     \nonumber\\
	=& \frac{1}{(n!)^2}\sum_{\sigma,\varsigma}\int_{\RR_+^{2n}} \int_{\RR^{n}}  |\widehat{f}_{n}^{(t,x)}(\vbfr,\bm{\eta}) \overline{\widehat{f}_{n}^{(t,x)}(\vbfs,\bm{\eta})}| \prod_{j=1}^n|\eta_j|^{\tau_{j}} |s_j-r_j|^{-pm\gamma_0} d\bm{\eta}d\vec{\bfs} d\vec{\bfr} \,,  \nonumber
\end{align}
where the equality holds since the integrals in line 1 and line 2 are actually independent of permutations $\sigma,\varsigma$. {We introduce the following notations to simplify the presentation:
\begin{equation*}
	\varrho_{j}:=\frac{1+\tau_{j}}{2}=\begin{cases}
		\frac 12\,, &\pi_j=0\,;\\
		\frac 12+\frac{kp}{2}(1-2H)\,, &\pi_j=1\,;\\
		\frac 12+kp(1-2H)\,, &\pi_j=2\,.
	\end{cases}
\end{equation*}
Then noticing the definition of $\widehat{f}_{n}^{(t,x)}(\vbfr,\bm{\eta})$ in \eqref{Def:hatf_n} with $\sigma$ being natural permutation, the integration with respect to $\eta$  gives 
\begin{align}
	|\cJ_1|^p \leq&C^n \int_{{\bT_1\times\bT_1}} \int_{\RR^{n}} \prod_{j=1}^n e^{-\frac 12(s_{j+1}-s_{j}+r_{j+1}-r_{j})|\eta_j|^2}\prod_{j=1}^n|\eta_j|^{\tau_{j}} |s_j-r_j|^{-pm\gamma_0} d\bm{\eta}d\vec{\bfs} d\vec{\bfr} \nonumber\\
	\leq&   C^n   \int_{{\bT_1\times\bT_1}} \int_{\RR^{n}} \prod_{j=1}^{n} |s_{j+1}-s_j+r_{j+1}-r_{j}|^{-\frac{\varrho_{j}}{2}}\times|s_j-r_j|^{-pm\gamma_0} d\bm{\eta}d\vec{\bfs} d\vec{\bfr} \nonumber\\
	\leq&C^n \int_{{\bT_1\times\bT_1}}\prod_{j=1}^{n} |s_{j+1}-s_j|^{-\frac{\varrho_{j}}{2}}|r_{j+1}-r_{j}|^{-\frac{\varrho_{j}}{2}}\times |s_j-r_{j}|^{-pm\gamma_0} d\vec{\bfs} d\vec{\bfr}\leq C^n  \,,\label{Ineq:J1^p_2}
\end{align}}
under the \emph{dimension conditions} of Theorem \ref{Thm.HYBL_N}.

Similarly to the argument in the proof of Theorem \ref{main_result1},   the \emph{dimension conditions} will hold if the following conditions are satisfied with $H_0^{(1)}:=1-pm(1-H_0)$:
\begin{align}
	&2\Big[ \frac{1}{4}+\frac{kp}{2}(1-2H) \Big]+2(2-2H^{(1)}_0)<3 \quad \text{and} \quad pm\gamma_0<1 \nonumber
\end{align}
which is equivalent to
\begin{align}\label{condi_p<}
	   1< p<\frac{5}{2k(1-2H)+8m(1-H_0)} \quad \text{and} \quad  1< p<\frac{1}{2m(1-H_0)}\,.
\end{align}


\textbf{Step 2: Estimate $\cJ_2(q,m',k',\bm{\pi})$.} We shall use the Hardy-Littlewood-Sobolev inequality solely to show 
\begin{equation*}
	{\cJ_2:=\cJ_2(q,m',k',\bm{\pi})\leq C^n (n!)^{-\frac 2q+\frac{1}{2q}+\frac{k'}{2}(1-2H)}\,,}
\end{equation*}
with $\cJ_2(q,m',k',\bm{\pi})$ defined by \eqref{def_j1j2}.
By the Cauchy-Schwarz inequality, we have
\begin{equation*}
	\Psi_{n,\bm{\tau'}}(\vbfr,\vbfs)\leq \sqrt{\Psi_{n,\bm{\tau'}}(\vbfr,\vbfr)} \sqrt{\Psi_{n,\bm{\tau'}}(\vbfs,\vbfs)}\,.
\end{equation*}
We substitute this into $\cJ_2$ and then apply   the Hardy-Littlewood-Sobolev inequality to  obtain
\begin{align*}
	\cJ_2=&\bigg(\int_{\RR_+^{2n}}\Psi_{n,\bm{\tau'}}(\vbfr,\vbfs)\prod_{i=1}^n|s_i-r_i|^{-qm'\gamma_0} d\vec{\bfs} d\vec{\bfr} \bigg)^{\frac{1}{q}}\\
	\leq& \bigg(\int_{\RR_+^{2n}}\sqrt{\Psi_{n,\bm{\tau'}}(\vbfr,\vbfr)} \sqrt{\Psi_{n,\bm{\tau'}}(\vbfs,\vbfs)}\prod_{i=1}^n|s_i-r_i|^{-qm'\gamma_0} d\vec{\bfs} d\vec{\bfr} \bigg)^{\frac{1}{q}}\\
	\leq& C^n \bigg(\int_{\RR_+^{n}} \Big| \Psi_{n,\bm{\tau'}}(\vbfs,\vbfs)\Big|^{\frac{1}{2H^{(2)}_0}} d\vec{\bfs} \bigg)^{\frac{2H^{(2)}_0}{q}}\,,
\end{align*}
where $H_0^{(2)}:=1-qm'(1-H_0)$ and we should require $qm'\gamma_0<1$.  Recall that $\ga_0=2-2H_0$, then
\begin{equation}\label{Rmk.condi_q<}
	qm'\gamma_0<1\Leftrightarrow 1<  q<\frac{1}{2m'(1-H_0)}\,.
\end{equation}
Notice that
\begin{align}\label{est_phi1}
	\Psi_{n,\bm{\tau'}}(\vbfs,\vbfs)=&\int_{\RR^{n}} \bigg| \frac{1}{n!} \sum_{\sigma} \widehat{f}_{n,\sigma}^{(t,x)}(\vbfs,\bm{\eta})\cdot \frac{1}{n!}\sum_{\varsigma}\overline{\widehat{f}_{n,\varsigma}^{(t,x)}(\vbfs,\bm{\eta})}\bigg| \prod_{j=1}^n|\eta_j|^{\tau'_{j}} d\bm{\eta}\nonumber \\
	=&\int_{\RR^{n}} \bigg|\frac{1}{(n!)^2}\sum_{\sigma,\varsigma}\widehat{f}_{n,\sigma}^{(t,x)}(\vbfs,\bm{\eta})\cdot \overline{\widehat{f}_{n,\varsigma}^{(t,x)}(\vbfs,\bm{\eta})}\bigg| \prod_{j=1}^n|\eta_j|^{\tau'_{j}} d\bm{\eta} \nonumber\\
	=&\int_{\RR^{n}} \frac{1}{(n!)^2}\sum_{\sigma}|\widehat{f}_{n,\sigma}^{(t,x)}(\vbfs,\bm{\eta})|^2 \prod_{j=1}^n|\eta_j|^{\tau'_{j}} d\bm{\eta}\,,
\end{align}
where the last equality follows from
\begin{equation*}
	\1_{\sigma}(\vbfs)\cdot \1_{\varsigma}(\vbfs)=\begin{cases}
		0\,, &\sigma\neq\varsigma\,;\\
		\1_{\sigma}(\vbfs)\,, &\sigma=\varsigma\,.
	\end{cases}
\end{equation*}
Hence, we have by \eqref{est_phi1}
\begin{align}
	\int_{\RR_+^{n}} \Big| \Psi_{n,\bm{\tau'}}(\vbfs,\vbfs)\Big|^{\frac{1}{2H^{(2)}_0}} d\vec{\bfs}
	=&\frac{1}{(n!)^{\frac{1}{H^{(2)}_0}}}\int_{\RR_+^{n}}\bigg| \int_{\RR^{n}} \sum_{\sigma}|\widehat{f}_{n,\sigma}^{(t,x)}(\vbfs,\bm{\eta})|^{2} \prod_{j=1}^n|\eta_j|^{\tau'_{j}} d\bm{\eta} \bigg|^{\frac{1}{2H^{(2)}_0}} d\vec{\bfs}\nonumber \\
	=&\frac{1}{(n!)^{\frac{1}{H^{(2)}_0}}}\sum_{\sigma'}\int_{{\bT_1^{\sigma' }}}  \bigg|\int_{\RR^{n}} \sum_{\sigma}|\widehat{f}_{n,\sigma}^{(t,x)}(\vbfs,\bm{\eta})|^{2} \prod_{j=1}^n|\eta_j|^{\tau'_{j}} d\bm{\eta} \bigg|^{\frac{1}{2H^{(2)}_0}} d\vec{\bfs}\nonumber \\
	=&\frac{1}{(n!)^{\frac{1}{H^{(2)}_0}}}\sum_{\sigma}\int_{{\bT_1^{\sigma }}} \bigg| \int_{\RR^{n}} |\widehat{f}_{n,\sigma}^{(t,x)}(\vbfs,\bm{\eta})|^{2} \prod_{j=1}^n|\eta_j|^{\tau'_{j}} d\bm{\eta} \bigg|^{\frac{1}{2H^{(2)}_0}} d\vec{\bfs} \nonumber\,.
\end{align}   
Similar to \eqref{Ineq:J1^p_2}, we get
\begin{align}\label{eq.530}
	\int_{\RR_+^{n}} \Big| \Psi_{n,\bm{\tau'}}(\vbfs,\vbfs)\Big|^{\frac{1}{2H^{(2)}_0}} d\vec{\bfs} \leq \frac{C^n}{(n!)^{\frac{1}{H^{(2)}_0}}}\sum_{\sigma}\int_{{\bT_1^{\sigma }}} \prod_{i=1}^{n} |s_{\sigma(i+1)}-s_{\sigma(i)}|^{-\frac{\varrho'_{j}}{2H^{(2)}_0}}d\vec{\bfs}
\end{align}
with $\varrho'_{j}$ being defined by
\begin{equation*}
	\varrho'_{j}=\frac{1+\tau'_{j}}{2}=\begin{cases}
		\frac 12\,, &\pi_j=0\,;\\
		\frac 12+\frac{k'q}{2}(1-2H)\,, &\pi_j=1\,;\\
		\frac 12+k'q(1-2H)\,, &\pi_j=2\,.
	\end{cases}
\end{equation*}
The finiteness of \eqref{eq.530} requires that for $j=1, \cdots, n$,   $\frac{\varrho'_{j}}{2H^{(2)}_0}$ satisfies the following \emph{Hardy-Littlewood-Sobolev conditions}:
\begin{align}\label{Rmk.condi_p>}
	\frac{\varrho'_{j}}{2H^{(2)}_0}<1 \Leftrightarrow&\ \frac{\frac 12+k'q(1-2H)}{2H^{(2)}_0}<1 \nonumber \\
	\Leftrightarrow&\ \   1<q<\frac{3}{2k'(1-2H)+4m'(1-H_0)}\,.
\end{align}
Under the above condition  we have from \eqref{eq.530}
\begin{align}
	\bigg[\int_{\RR_+^{n}} \Big| \Psi_{n,\bm{\tau'}}^{(q)}(\vbfs,\vbfs)\Big|^{\frac{1}{2H^{(2)}_0}} d\vec{\bfs} \bigg]^{\frac{2H^{(2)}_0}{q}} 
	\leq& \frac{C^n}{(n!)^{\frac 2q}}\bigg[n!\cdot \frac{\prod_{i=1}^n\Gamma(1-\varrho'_{j}/2H^{(2)}_0)}{\Gamma(n-\sum_{j=1}^n \varrho'_{j}/2H^{(2)}_0 +1)} \bigg]^{\frac{2H^{(2)}_0}{q}}\nonumber \\
	\leq& C^n (n!)^{-\frac 2q+[\frac{1}{2q}+\frac{k'}{2}(1-2H)]}\,,  \label{Ineq:Step2} 
\end{align}
where the last inequality  follows from  the Stirling  formula and the fact that
\begin{equation*}
	\sum_{j=1}^n \varrho'_{j}= n\Big[\frac 12+\frac{k'q}{2}(1-2H)\Big] \,.
\end{equation*}
Incorporating \eqref{Ineq:J1^p_2} from Step 1 and \eqref{Ineq:Step2} from Step 2 into \eqref{def_j1j2}, we obtain {for any $(t,x)\in\RR_+\times\RR^d$}
\begin{align*}
	\EE [ u(t, x)^2]\leq& \sum_{n=0}^{\infty} n!\sum_{\bm{\pi}} \cJ_1\times \cJ_2 
	\leq \sum_{n=0}^{\infty} C^n (n!)^{1-\frac 2q+[\frac{1}{2q}+\frac{k'}{2}(1-2H)]}\,.
\end{align*}
This proves   $\EE [ u(t, x)^2]<\infty$ if
\begin{align}\label{condi_q<}
	1-\frac 2q+\Big[\frac{1}{2q}+\frac{k'}{2}(1-2H)\Big]< 0 \Leftrightarrow   1<q<\frac{3}{k'(1-2H)+2}  \,.
\end{align}

\textbf{Step 3: Constraints on $p$ ($q$), $m$ ($m'$), and $k$ ($k'$).}
The conditions \eqref{Rmk.condi_q<} and \eqref{Rmk.condi_p>} are summarized as
 \begin{align}
	1<q<\frac{3}{2k'(1-2H)+4m'(1-H_0)} \quad \text{and} \quad 1<q<\frac{1}{2m'(1-H_0)}\,. \label{Rmk.condi_q}	
\end{align}
Remember that in \textbf{Step 2} and \textbf{Step 3}, in order to ensure $\EE[u(t,x)^2]<+\infty$, it is necessary to have
\begin{align}
	&1<p<\frac{5}{2k(1-2H)+8m(1-H_0)} \wedge \frac{1}{2m(1-H_0)}\,;\label{Rmk.condi_p} \\
	&1<q<\frac{3}{2k'(1-2H)+4m'(1-H_0)} \wedge \frac{1}{2m'(1-H_0)}\,;\label{Rmk.condi_q} \\
	&1<q<\frac{3}{k'(1-2H)+2} \,. \label{Rmk.condi_qq}
\end{align}
To determine the conditions on $H$ and $H_0$ under which \eqref{Rmk.condi_p}-\eqref{Rmk.condi_qq} are met, we initially get rid of  the wedge symbol ``$\wedge$" in the above first two inequalities \eqref{Rmk.condi_p} and \eqref{Rmk.condi_q}. To this end,    we   consider the following four cases:
\begin{equation*}
	\begin{cases}
\hbox{Case 1}: \   		\frac{5}{2k(1-2H)+8m(1-H_0)} < \frac{1}{2m(1-H_0)}\,,\quad \frac{3}{2k'(1-2H)+4m'(1-H_0)} < \frac{1}{2m'(1-H_0)}\,; \\
\hbox{Case 2}: \ 		\frac{5}{2k(1-2H)+8m(1-H_0)} < \frac{1}{2m(1-H_0)}\,,\quad \frac{3}{2k'(1-2H)+4m'(1-H_0)} > \frac{1}{2m'(1-H_0)}\,; \\
\hbox{Case 3}: \ 		\frac{5}{2k(1-2H)+8m(1-H_0)} > \frac{1}{2m(1-H_0)}\,,\quad \frac{3}{2k'(1-2H)+4m'(1-H_0)} < \frac{1}{2m'(1-H_0)}\,; \\
\hbox{Case 4}: \ 		\frac{5}{2k(1-2H)+8m(1-H_0)} > \frac{1}{2m(1-H_0)}\,,\quad \frac{3}{2k'(1-2H)+4m'(1-H_0)} > \frac{1}{2m'(1-H_0)}\,. \\
	\end{cases}
\end{equation*}
We will provide a detailed treatment of Case 1, and the approach for the other cases is analogous.


\noindent \emph{Case 1:} In this case,   the constraints \eqref{Rmk.condi_p}-\eqref{Rmk.condi_qq} on $p,\ q$  become the following conditions:
\begin{equation}\label{condi_case1}
\begin{cases}
&1<q<\frac{3}{k'(1-2H)+2}\,;\\
&1<q<\frac{3}{2k'(1-2H)+4m'(1-H_0)} < \frac{1}{2m'(1-H_0)}\,; \\
&1<p<\frac{5}{2k(1-2H)+8m(1-H_0)} < \frac{1}{2m(1-H_0)}\, .
\end{cases}
\end{equation}
Denote the region
\begin{align}\label{defi_s}
\mathcal{S}:=&\lt\{(H,H_0)\in(0,\frac14)\times(\frac12,\frac34): \  H+2H_0>\frac 54,\ H+H_0\leq \frac34,\ 12H+4H_0>3 \rt\}.
\end{align}
\begin{figure}[H]
	\centering
	\includegraphics[width=0.6\textwidth]{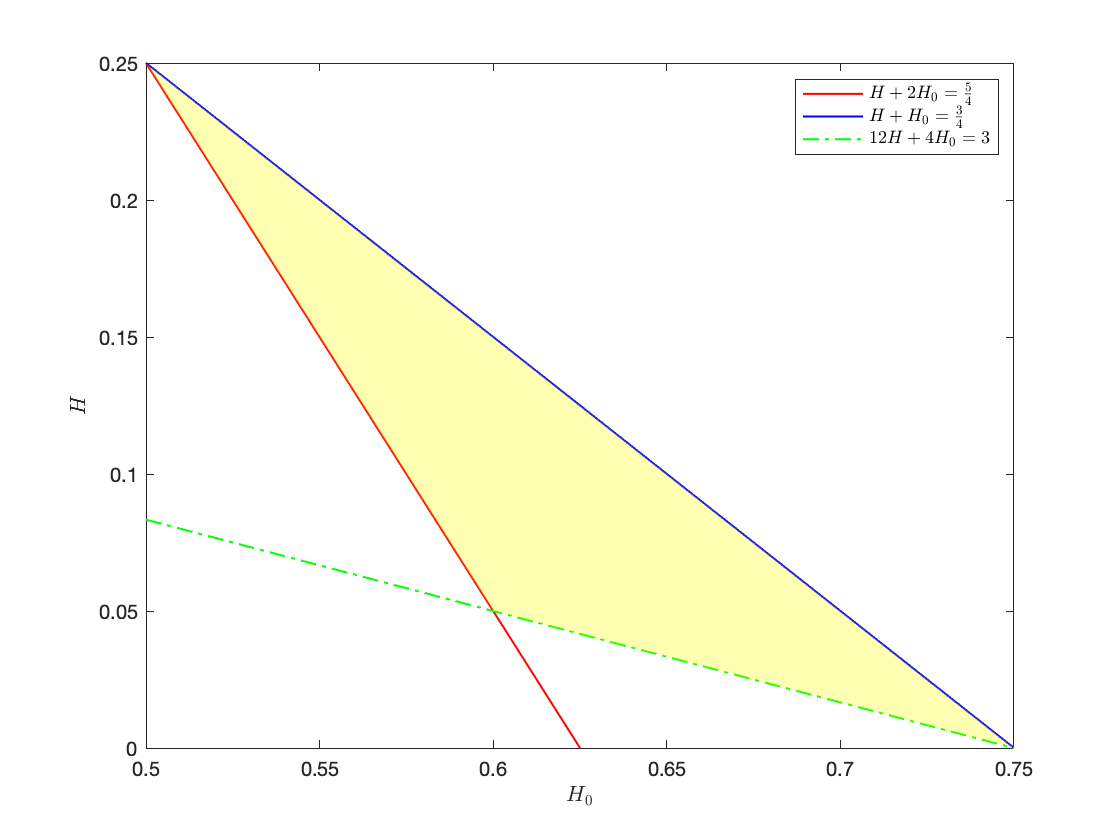}
	\caption{The region $\mathcal{S}$ in \eqref{defi_s}}
	\label{Fig.Case1}
\end{figure}

We shall show  that for any $(H,H_0)\in\mathcal{S}$, there exist $p,q \in[1,\infty)$, and $m, m'\in[0,1],\ k, k'\in[0,1]$ such that conditions in. \eqref{condi_case1} hold.
For any arbitrarily  small $\varepsilon>0$, let us choose
\begin{align}
& q=\frac{3}{k'(1-2H)+2}-\varepsilon,\label{chosen_q}\\
& p=\frac{q}{q-1}=\frac{3-\varepsilon}{1-k'(1-2H)-\varepsilon},\label{chosen_p}\\
& m'=\frac{2-k'(1-2H)}{4(1-H_0)}\label{chosen_m}.
\end{align}
Thus it suffices to show that for any $(H,H_0)\in \cS$, there exists $k'\in[0,1]$ such that the parameters $(p, q, m')$ given by \eqref{chosen_q}-\eqref{chosen_m} satisfy conditions  \eqref{condi_case1}  for  sufficiently small $\varepsilon>0$.

 Firstly, we need to ensure $p,q \geq 1$ and $0\leq m'\leq 1$. Note that
\[q\geq 1\Leftrightarrow k'\leq \frac{1-2\ep}{(1-2H)(1+\ep)}\sim \frac{1}{1-2H} \,,\]
which is implied by $k'\leq 1$  and $H>0$. Now $p\geq 1$ is equivalent to $q\ge 1$. Besides,
\begin{equation}\label{condi_k1}
0\leq m'\leq 1\Leftrightarrow k'\geq\frac{4H_0-2}{1-2H}\,.
\end{equation}
In order to be able to  find a  $k'\in[0,1]$ so that the above inequality holds true we need
$$\frac{4H_0-2}{1-2H}\leq 1\Leftrightarrow H+2H_0\leq\frac32,$$
which holds for $(H,H_0)\in\cS$ since $H+H_0\leq\frac34$ and then $H_0<\frac34$.

 Next, we shall verify the conditions in 
\eqref{condi_case1}  with  the   $p,\ q$ and $m'$ given by  \eqref{chosen_q}-\eqref{chosen_m}. It is clear that we can take $\ep=0$.
\begin{enumerate}
\item[(i)]  The     inequality  in the first line of \eqref{condi_case1} holds obviously with the choice of $q$ in \eqref{chosen_q}.
\item[(ii)] For the  $q$ defined by   \eqref{chosen_q} we now  show the   inequality in the second line  of   \eqref{condi_case1}:
\begin{align*}
	q<\frac{3}{2k'(1-2H)+4m'(1-H_0)}\,,
\end{align*}
which  is equivalent to
\begin{align*}
	 \frac{3}{k'(1-2H)+2}\leq &\frac{3}{2k'(1-2H)+4m'(1-H_0)} \\
	 \Leftrightarrow\quad    m'\leq &\frac{2-k'(1-2H)}{4(1-H_0)}\,.
\end{align*}
This holds with the choice of $m'$ in \eqref{chosen_m}.
\item[(iii)] Recall that the third inequality in the second  line of  \eqref{condi_case1}  is
$$\frac{3}{2k'(1-2H)+4m'(1-H_0)} < \frac{1}{2m'(1-H_0)} $$
which is equivalent to
\begin{equation*}
m'(1-H_0)<k'(1-2H)\,.
\end{equation*}
By using  the definition \eqref{chosen_m} of $m'$, the constraint placed on $k'$ as mentioned above is equivalent to
    \begin{equation}\label{condi_k2}
    \frac{2}{5(1-2H)}<k'<1\,.
    \end{equation}
\item[(iv)] The second inequality of the third  line   of \eqref{condi_case1},  by choice of $p$ in \eqref{chosen_p},  is equivalent to
\begin{align*}
	 & 25-12(H+2H_0)<k'(1-2H)+24m'(1-H_0)\,.
\end{align*}
With  $m'$ defined in \eqref{chosen_m}  this can be rewritten as
    \begin{equation}\label{condi_k3}
    0<k'<\frac{12(H+2H_0)-13}{5(1-2H)}.
    \end{equation}
\item[(v)]  Now we consider the third inequality in the last  line    of \eqref{condi_case1}.
Noticing that $k=1-k'$ and $m=1-m'$ and substituting the value of $m'$,  this yields
    \begin{equation}\label{condi_k4}
    0<k'<\frac{2+4(H_0-2H)}{5(1-2H)}.
    \end{equation}
\end{enumerate} 

In summary, when $k'$ fulfills the requirements specified in \eqref{condi_k2}, \eqref{condi_k3}, and \eqref{condi_k4}, it ensures 
\eqref{condi_case1}. The conditions met by $k'$ can be concisely summarized as follows.
\begin{equation}\label{condi_k'}
 \lt\{\frac{2}{5(1-2H)}\vee \frac{4H_0-2}{1-2H} \rt\}<k'< \lt\{\frac{12(H+2H_0)-13}{5(1-2H)}\wedge \frac{2+4(H_0-2H)}{5(1-2H)}\wedge 1 \rt\}\,.
\end{equation}
Notice  that
\[12(H+2H_0)-13\leq 2+4(H_0-2H)\Leftrightarrow H+H_0\leq \frac34 \] and
\[\frac{12(H+2H_0)-13}{5(1-2H)}\leq 1\Leftrightarrow 11H+12H_0\leq 9,\]
which holds under the condition $H+H_0\leq\frac34$. Thus, on the set $\cS$  the condition  \eqref{condi_k'}  is reduced  to
\begin{equation}\label{condi_k}
\lt\{ \frac{2}{5(1-2H)}\vee \frac{4H_0-2}{1-2H}\rt\} <k'<\frac{12(H+2H_0)-13}{5(1-2H)}.
\end{equation}

Finally, it remains to show that when $(H, H_0)\in \mathcal{S}$, there is $k'$ satisfying \eqref{condi_k} which amounts to
\begin{equation}\label{condi_k1}
 \lt\{\frac{2}{5(1-2H)}\vee \frac{4H_0-2}{1-2H}\rt\} \le \frac{12(H+2H_0)-13}{5(1-2H)}.
\end{equation}
We shall prove the above inequality assuming $H_0\le 3/5$ and $H_0>3/5$ respectively.
First, let us assume $H_0\leq \frac35$. In this case, we have $\frac{2}{5(1-2H)}\leq \frac{4H_0-2}{1-2H}$. Then \eqref{condi_k1} becomes
\[\frac{2}{5(1-2H)}<\frac{12(H+2H_0)-13}{5(1-2H)}\Leftrightarrow  H+2H_0>\frac54\, \]
which corresponds to the red solid line in Figure \ref{Fig.Case1}.
Next, we assume $H_0>\frac35$ which means $\frac{2}{5(1-2H)}>\frac{4H_0-2}{1-2H}$.
In this case, \eqref{condi_k1} becomes
\[\frac{4H_0-2}{1-2H}<\frac{12(H+2H_0)-13}{5(1-2H)}\Leftrightarrow 12H+4H_0>3\,,\]
which aligns with the green dashed line in Figure \ref{Fig.Case1}. 
Therefore, for any given $(H,H_0)\in\mathcal{S}$ (as illustrated in Figure \ref{Fig.Case1}) we can select an appropriate $k'$ such that condition \eqref{condi_k} is satisfied.

\emph{Case 2-Case 4:} These cases can be approached with the same methodology as Case 1. Nevertheless, we find that it does not allow for an extension of the solvability region. Consequently, we will omit the details.

As a consequence, on the region $\cS$ (refer to Figure \ref{Fig.Case1}), we have {for any $(t,x)\in\RR_+\times\RR^d$}
\[\EE[u(t,x)^2]=\sum\limits_{n=1}^{\infty}\EE[u_n(t,x)^2]<+\infty.\]
Since   the above convergence was proved  in \cite{CH2021}   when $H_0+H>3/4$,
we complete the proof.
\end{proof}

\appendix
\section{Some lemmas}\label{appenA}

\begin{lemma}\label{l.3.2}  
Let $0<\al, \be<1$ with $\al+\be
 >1$. Then there is a constant $c\geq 1$ independent of $\vare>0$ such that for all $x\in (0, 3\vare )$,
\begin{equation}\label{e:3.6}
c^{-1}   x^{1-(\al+\be)}  \leq \int_0^\vare u^{-\al} (u+x)^{-\be} du\leq c  x^{1-(\al+\be)}   .
\end{equation}
\end{lemma}
This lemma can be found in \cite{CH2021}, and the following lemma can be found in \cite{HNS}.


\begin{lemma}[Hardy-Littlewood-Sobolev inequality]\label{Hardy-Littlewood-Sobolev}
For any $\varphi\in L^{1/H_0}(\RR^n)$, it holds
\begin{equation}\label{ineq_Hardy-Littlewood-Sobolev}
\int_{\RR^n}\int_{\RR^n}\varphi(\vec{\mathbf{r}})\varphi(\vec{\mathbf{s}})\prod_{i=1}^n|s_i-r_i|^{2H_0-2}d\vec{\mathbf{r}}d\vec{\mathbf{s}}\leq   C_{H_0} ^n   \lt(\int_{\RR^n}|\varphi(\vec{\mathbf{r}})|^{1/H_0}d\vec{\mathbf{r}}\rt)^{2H_0},
\end{equation}
where $C_{H_0}>0$.
\end{lemma}

The H\"older-Young-Brascamp-Lieb  type inequality has been studied and extended by various groups after the seminar work of Brascamp and Lieb \cite{BL1976}.  We refer \cite[Appendix A]{ALNS2015}, \cite{Lehec2014} and the references therein to the readers on this interesting topic. Below, we present the homogeneous and non-homogeneous H\"older-Young-Brascamp-Lieb inequalities that we are going to use in this work.  { Let $\cH=\RR^n$, $\cH_1=\RR^{n_1}$,\dots, $\cH_m=\RR^{n_m}$ be finite dimensional Euclidean spaces equipped with the corresponding Lebesgue measure $\mu(\cdot)$.} We remark that $\cH$, $\cH_1$,\dots, $\cH_m$ can be Hilbert space in general (e.g., \cite{BCCT2008}). For $j=1,\cdots,m$, let $f_j:\cH_j\to \RR_+$ be nonnegative function satisfying  $f_j\in L^{p_j}(\mu)$ for $p_1,\cdots,p_m\in [1,\infty)$  and let $l_j:\cH\to \cH_j$ be surjective linear transformations.

 We recall the (local) multilinear functional
\begin{equation*}
	\Lambda(f_1,\dots,f_m):=\int \prod_{j=1}^m f_j(l_j(\bfx))\prod_{j=1}^n d\mu(x_j)\,,
\end{equation*}
and the H\"older-Young-Brascamp-Lieb  type inequality
\begin{equation}\label{Ineq.HYBL}
	|\Lambda(f_1,\dots,f_m)|\leq K_{\cH} \prod_{j=1}^m \|f_j\|_{L^{p_j}(\mu)}\,,
\end{equation}
for some positive finite  constant $K_{\cH}$. There are many works focusing on the conditions under which
\eqref{Ineq.HYBL} holds true.  {We shall use  a specific condition, the nonhomogeneous H\"older-Young-Brascamp-Lieb inequality, as presented in \cite[Theorem 2.2]{BCCT2010} (also discussed in \cite{BCCT2008}).}

\begin{Thm}[Nonhomogeneous H\"older-Young-Brascamp-Lieb  inequality]\label{Thm.HYBL_N}
	Let $\cH$, $\cH_1$,\dots, $\cH_m$, be finite dimensional Hilbert spaces equipped with { the corresponding Lebesgue measure $\mu(\cdot)$ and $f_j\in L^{p_j}(\mu)$.} Consider a local version of \eqref{Def.Multi}
	\begin{equation}\label{Def.Multi_loc}
	\Lambda_{loc}(f_1,\dots,f_m):=\int_{\{\bfx\in \cH:|\bfx|\leq 1\}} \prod_{j=1}^m f_j(l_j(\bfx))\prod_{j=1}^n d\mu(x_j)\,.
	\end{equation}
	A necessary and sufficient condition that H\"older-Young-Brascamp-Lieb  type inequality \eqref{Ineq.HYBL} holds for  $\Lambda_{loc}$, with $0<K_{\cH}<\infty$ for all nonnegative measurable functions $f_j$, is that every subspace $V\subseteq \cH$ satisfies the following dimension condition:
	\begin{equation}\label{Cond.codim}
		codim_{\cH} (V)\geq \sum_{j}p_j^{-1} codim_{\cH_j} (l_j(V))\,.
	\end{equation}
\end{Thm}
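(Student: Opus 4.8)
The plan is to derive Theorem~\ref{Thm.HYBL_N} from the characterisation of finite Brascamp--Lieb constants of Bennett, Carbery, Christ and Tao (\cite[Theorem~2.2]{BCCT2010}; see also \cite{BCCT2008} and the original \cite{BL1976}); below I sketch how that argument proceeds, splitting it into the necessity and the sufficiency of the dimension condition \eqref{Cond.codim}.

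For \textbf{necessity}, I would test the inequality \eqref{Ineq.HYBL} on indicator functions of thin slabs. Given a subspace $V\subseteq\cH$ and a small $\delta>0$, set $S_\delta:=\{\bfx\in\cH:\ |\bfx|\le1,\ \mathrm{dist}(\bfx,V)\le\delta\}$, a $\delta$-neighbourhood of the unit ball of $V$, so that $\mu(S_\delta)\simeq\delta^{\,codim_\cH(V)}$. Since each $l_j$ is linear and surjective, $l_j(S_\delta)$ is comparable, up to universal constants, to a $\delta$-neighbourhood of $l_j(V)$ inside a fixed ball of $\cH_j$, whence $\mu_j(l_j(S_\delta))\simeq\delta^{\,codim_{\cH_j}(l_j(V))}$. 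Choosing $f_j:=\mathbf 1_{l_j(S_\delta)}$ we have $f_j(l_j(\bfx))=1$ on $S_\delta$, so $\Lambda_{loc}(f_1,\dots,f_m)\ge\mu(S_\delta)$, while $\|f_j\|_{L^{p_j}(\mu)}=\mu_j(l_j(S_\delta))^{1/p_j}\simeq\delta^{\,p_j^{-1}codim_{\cH_j}(l_j(V))}$. Feeding these into \eqref{Ineq.HYBL} gives $\delta^{\,codim_\cH(V)}\lesssim K_\cH\,\delta^{\sum_j p_j^{-1}codim_{\cH_j}(l_j(V))}$, and letting $\delta\downarrow0$ forces \eqref{Cond.codim}.

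For \textbf{sufficiency}, I would induct on $\dim\cH$, the case $\dim\cH\le1$ reducing to H\"older's inequality on a bounded interval. Call $V$ with $0\subsetneq V\subsetneq\cH$ \emph{critical} if \eqref{Cond.codim} holds with equality for it. If some critical $V$ exists, the datum factorises: writing $\cH=V\oplus V^{\perp}$ and splitting each $\cH_j=l_j(V)\oplus l_j(V)^{\perp}$, one dominates $\Lambda_{loc}$ by the product of the analogous local forms built from the restrictions $l_j|_V$ on $V$ and from the induced quotient maps on $\cH/V$; the bookkeeping point is that each factor still obeys its own dimension condition --- for the $V$-factor because $V$ is critical, for the quotient factor by subtracting the $V$-identity from \eqref{Cond.codim} --- so the induction hypothesis plus a final H\"older step yields \eqref{Ineq.HYBL}. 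If instead no proper nonzero subspace is critical, then \eqref{Cond.codim} holds with strict inequality for all $0\subsetneq V\subsetneq\cH$, and I would invoke the heat-flow monotonicity method of \cite{BCCT2008}: regularising each $f_j$ to $f_j^{(t)}:=f_j*g_t$ with $g_t$ a Gaussian of variance $t$, one shows that $\Lambda(f_1^{(t)},\dots,f_m^{(t)})\big/\prod_j\|f_j^{(t)}\|_{L^{p_j}}$ is monotone in $t$ (Gaussian inputs being extremisers), then reads off the bound in the limit; the strict dimension inequalities are exactly what keeps the quadratic forms in that computation nondegenerate. (For the local statement one may first reduce to the scaling-critical situation, since with $\le$ no trace/scaling condition is required.)

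The step I expect to be the main obstacle is the non-degenerate (``simple datum'') case of sufficiency: the slab test and the factorisation are routine scaling and linear-algebra manipulations, but establishing the bound for a simple datum is the genuine analytic heart --- it needs either the heat-flow monotonicity machinery or a delicate dimension-reduction exploiting the strict inequalities. In the paper I would not reproduce this and would simply quote \cite[Theorem~2.2]{BCCT2010}.
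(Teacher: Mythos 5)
The paper offers no proof of this theorem at all: it is imported verbatim as \cite[Theorem 2.2]{BCCT2010}, so there is nothing internal to compare your argument against. Your sketch (slab test on $\delta$-neighbourhoods of subspaces for necessity; induction on critical subspaces plus the heat-flow/simple-datum analysis for sufficiency) is a faithful outline of how the cited result is actually established in \cite{BCCT2008,BCCT2010}, and your closing decision to simply quote that reference is exactly what the paper does.
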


Using the above theorem, we can show an important ingredient in the proof of Theorem \ref{main_result1}.
\begin{lemma}\label{verify_HYBL}
Under the condition  $H_0\geq 1/2$ and the conditions \eqref{cond.0<H<1_a}-\eqref{cond.0<H<1_d} of Theorem \ref{main_result1}, the following dimensional conditions hold when $co\dim_{\cH} (V)=1,2,3,4$:
\begin{equation*}
	co\dim_{\cH} (V)\geq \sum\limits_{j=1}^6  z_j\cdot  co\dim_{\cH_j} (l_j(V))\,.
\end{equation*}
\end{lemma}

\begin{proof}
We show the  lemma  for $co\dim=1 ,2, 3, 4$ separately.

\noindent\textbf{Case 1:}  $co\dim_\cH (V)=1$, i.e. $\dim_\cH (V)=3$. In this case, we only need to consider $\rank([l_j]_{j\in J})=1$, i.e. $|J|=1$. {From \eqref{Cond_z's}, $z_j\leq 1$ ($j=1,\cdots,6$) under the condition $H_0\geq 1/2$ and the condition \eqref{cond.0<H<1_a}. Thus, the dimension condition \eqref{e.dimension} holds.}

\noindent\textbf{Case 2:}  $co\dim_\cH (V)=2$, i.e. $\dim_\cH (V)=2$. We shall choose $J$ such that $\rank([l_j]_{j\in J})=2$.

Note that we only have to deal with $|J|=3$ since the dimension condition \eqref{e.dimension} automatically holds for $|J|=2$ and it is impossible to have  $\rank([l_j]_{j\in J})=3$ for $|J|=4,5,6$. The only possible case is $\rank([l_3,l_4,l_6])=2$. Then, the condition \eqref{e.dimension} in this case is equivalent to
\begin{align}\label{condi_case2}
	2\, \geq& z_3+z_4+z_6> \rho_2+\rho_2+\gamma_0 \nonumber\\
	=&(\frac32 d+\frac12\be^*-2|H|-\al_2)+2-2H_0 \nonumber\\
	\Leftrightarrow\qquad & |H|+2H_0>d+(\frac12d-|H|+\frac12\be^*)-\al_2\,.
\end{align}
Since $\al_2\in\{\frac12d-|H|+\frac12\be^*, d-2|H|+\be^*\}$, it is obvious that \eqref{condi_case2} holds under the condition \eqref{cond.0<H<1_b}:
\begin{align}\label{condi_2}
|H|+2H_0>d.
\end{align}


\noindent\textbf{Case 3:}  $co\dim_\cH (V)=3$, i.e. $\dim_\cH (V)=1$. We shall choose $J$ such that $\rank([l_j]_{j\in J})=3$.

For $|J|=3$, the dimension condition \eqref{e.dimension} holds automatically since $z_i\leq1$ for $i=1,\cdots,6$. Moreover, it is impossible to get  $\rank([l_j]_{j\in J})=3$ for $|J|=5,6$ by computation of the relevant ranks. Thus, we only need to consider $|J|=4$. By simple analysis, we know that there are four cases:
\begin{align*}
	&\rank([l_1,l_2,l_5,l_6])= \rank([l_1,l_3,l_4,l_6]) \\
	&=\rank([l_2,l_3,l_4,l_6])=\rank([l_3,l_4,l_6,l_5])=3\,.
\end{align*}
We treat $J=\{1,2,5,6\}$ as an example. The rest is similar and  we omit the details.

Now, we have $\rank([l_1,l_2,l_5,l_6])=3$, $\dim(\cap_{j=1,2,5,6} \ker (l_j))=1$ and $co\dim_\cH (V)=3$. In fact, we can obtain that $V=span\{(1,1,1,1)\}=\cap_{j=1,2,5,6} \ker (l_j)$ and
\begin{align*}
	&co\dim_{\cH_{1}} (l_1(V))=1\,,co\dim_{\cH_{3}} (l_3(V))=0\,,co\dim_{\cH_{5}} (l_5(V))=1\,,\\
	&co\dim_{\cH_{2}} (l_2(V))=1\,,co\dim_{\cH_{4}} (l_4(V))=0\,,co\dim_{\cH_{6}} (l_6(V))=1\,.
\end{align*}
Then, in this case the condition \eqref{e.dimension}  is analogous to
\begin{align}\label{condi_case3}
	3\, \geq& z_1+z_2+z_5+z_6\nonumber\\
	>& \rho_1+\rho_1+\gamma_0+\gamma_0 \nonumber\\
	=&(\frac32d_*-2H_*)+4-4H_0\,.
\end{align}
It is easy to see that  \eqref{condi_case3} holds under the third condition \eqref{cond.0<H<1_c} of \eqref{cond.0<H<1}:
\begin{align}\label{condi_3}
H_*+2H_0>\frac34d_*+\frac12.
\end{align}
\\

\noindent\textbf{Case 4:}  $co\dim_\cH (V)=4$, i.e. $\dim_\cH (V)=0$. We shall choose $J$ such that $\rank([l_j]_{j\in J})=4$.

Obviously, we only need to consider $|J|=4,5,6$. We first deal with $|J|=6$. This means
\[
 V=\bigcap_{j=1}^6 \ker (l_j)= \{0,0,0,0\}\,.
\]
Thus, the dimensional condition \eqref{e.dimension} in this case  is equivalent to the following:
 \begin{align}
	4\, \geq& z_1+z_2+z_3+z_4+z_5+z_6\nonumber \\
	>& \rho_1+\rho_1+\rho_2+\rho_2+\gamma_0+\gamma_0 \nonumber \\
	=&(\frac32d_*-2H_*)+2(2-2H_0) +(\frac32d+\frac12\be^*-2|H|-\al_2) \nonumber
\end{align}
{which amounts to
\begin{equation}
	|H|+2H_*+4H_0>d+\frac32d_* +(\frac12d-|H|+\frac12\be^*-\al_2). \label{Ineq.case1}
\end{equation}}
Since $\al_2\in\{\frac12d-|H|+\frac12\be^*, d-2|H|+\be^*\}$,
 we know \eqref{Ineq.case1} holds under the condition \eqref{cond.0<H<1_d}:
 \begin{align}\label{condi_4}
 |H|+2H_*+4H_0>d+\frac32d_*.
 \end{align}

 When $|J|=4,5$, the corresponding dimension conditions \eqref{e.dimension} can be implied by \eqref{Ineq.case1}. 
Thus, the proof of the lemma is complete.
\end{proof}



Next, we give some estimations for the alternative proof of the sufficiency of Theorem \ref{main_result1}.
\begin{lemma}\label{l.3.1}  For any $\al, \be, \ga\in (0, 1)$ and $a, b\in \RR$
denote
\begin{empheq}[left=\empheqlbrace]{align}
 \mathfrak{B}_1(a,b):=\int_0^{\frac12 } u^{-\beta}(1-u)^{-\al}
 |au-b|^{-\ga}du\,; \label{e.3.9}  \\
 \fB_2(a,b):=\int_ {\frac12 }^1 u^{-\be}(1-u)^{-\al}
 |au-b|^{-\ga}du. \label{e.3.10}
\end{empheq}
Then
\begin{equation}
\fB_1(a,b)
 \ls
 \begin{cases}
 |a|^{-q}|b|^{-\ga +q} \qquad\hbox{for any $q\in [0,  1-\be]  $ } \quad \hbox{if $\be+\ga>1$};\\
(|a|+|b|)^{-\ga} \ls
 |a|^{-q_1}  | b-a|^{- q_2}  |b |^{-\ga +q_1+q_2}\\
  \qquad\qquad \qquad\quad \hbox{for any $q_1, q_2, q_1+q_2\in [0, \ga]$}      \quad \hbox{if $\be+\ga<1$},\\
 \end{cases}\label{e.3.11}
 \end{equation}
 and
 \begin{equation}
\fB_2(a,b)
 \ls
 \begin{cases}
 |a|^{-q}|b-a|^{-\ga +q} \qquad \hbox{for any $q\in [0,  1-\al] $ } \quad \hbox{if $\al+\ga>1$};\\
(|a|+|b-a|)^{-\ga} \ls
 |a|^{-q_1}  | b-a|^{- q_2}  |b |^{-\ga +q_1+q_2}\\
  \qquad\qquad \qquad \qquad\quad\hbox{for any $q_1, q_2, q_1+q_2\in [0, \ga]$}      \quad \hbox{if $\al +\ga<1$}.\\
 \end{cases}\label{e.3.12}
 \end{equation}
\end{lemma}
\begin{proof}  First let $\be+\ga>1$. Since for $x\geq 0,$ $\be\in (0, 1)$, $\ga\in (0, 1)$    it holds that
\begin{align}
\int_0^x|\xi|^{-\be}|1-\xi|^{-\ga}d\xi\ \ls |x|^{-\be+1} \wedge 1\,. \label{Ineq:A.16}
\end{align}
Then for any $\rho_1\in[0,1]$,
\begin{align}\label{0-1/2dw}
\int_0^{\frac12}  w ^{-\be}(1-w)  ^{-\al}| {a}w-b |^{-\gamma}dw 
&\ls\int_0^{\frac12}w^{-\be }| aw-b |^{-\gamma}dw\nonumber\\
 &\ls |b|^{-\ga}\int_0^{\frac12}w^{-\be }\lt|\frac{|a|}{|b|}w-1\rt|^{-\ga}dw\nonumber\\
&\ls|b  |^{-\gamma-\be +1}\cdot| {a}|^{\be-1}\times\int_0^{\frac{| {a}|}{2|b  |}}\xi^{-\be }|1-\xi|^{-\gamma}d\xi\nonumber\,.
\end{align}
{We can apply \eqref{Ineq:A.16} in the last integral to get
\begin{align}
\int_0^{\frac12}  w ^{-\be}(1-w)  ^{-\al}| {a}w-b |^{-\gamma}dw &\ls|b |^{-\gamma-\be +1}\cdot| {a}|^{\be -1}\cdot \lt(\frac{| {a}|}{|b |}\wedge 1\rt)^{ -\be +1  }  \nonumber\\
&\ls|b |^{-\gamma-\be +1}\cdot| {a}|^{\be -1}\cdot \lt(\frac{| {a}|}{|b |}\rt)^{ (-\be +1)\rho_1  }  \nonumber\\
&=| {a}|^{-q_1}|b |^{-\gamma+q_1 }\,,
\end{align}}
where $q_1=(1-\be )(1-\rho_1)\in[0,1-\be ]$ and we make substitution  $\xi=\frac{|{a}|}{|b|}w$ in the third inequality.
Here we need that   $\be  ,\ \gamma\in (0, 1),\ \be +\gamma>1$.

Make substitution  $\widetilde{w}=1-w$. Then for $0\leq\rho_2\leq1$,
\begin{align}\label{1/2-1dw}
\int_{\frac12}^1|1-w|^{-\al}w^{-\be }| w-b_v|^{-\gamma}dw=\int_0^{\frac12}\widetilde{w}^{-\al}|1-\widetilde{w}|^{-\be }| {a}-b - {a}\widetilde{w}|^{-\gamma}d\widetilde{w}\,.
\end{align}
This can be used to show the first inequality in \eqref{e.3.12}.

Now if $\be\in (0, 1)$, $\ga\in (0, 1)$ and $\be+\ga<1$, then
\begin{align*}
\int_0^x|\xi|^{-\be}|1-\xi|^{-\ga}d\xi\ \ls \frac{|x|^{-\be+1} (1+|x|^{1-\be-\ga} )}{1+
|x|^{-\be+1}} \qquad \forall \ x>0 \,.
\end{align*}
With  the same argument as above we have
\begin{align}\label{0-1/2dw}
\int_0^{\frac12}&  w ^{-\be }(1-w)  ^{-\beta}| {a}w-b |^{-\gamma}dw\nonumber\\
&\ls|b  |^{-\gamma-\be +1}\cdot| {a}|^{\be-1}\times\int_0^{\frac{| {a}|}{2|b  |}}\xi^{-\be }|1-\xi|^{-\gamma}d\xi\nonumber\\
&\ls|b |^{-\gamma-\be +1}\cdot| {a}|^{\be -1}\cdot
\frac{|\frac ab |^{-\be+1} (1+|\frac ab|^{1-\be-\ga} )}{1+
|\frac ab|^{-\be+1}}\,.
\nonumber\\
&\ls (|a|+|b|)^{-\ga} \,.
\end{align}
This shows the first part of the
 second  inequality  in  \eqref{e.3.11}.
Since  $|a|+|b |\ge |a|\vee |b-a|\vee |b|$ implies the following inequalities:
\begin{align*}
 (|a|+|b |)^{-\ga}
\le &\left[ |a| \vee |b-a|\vee |b| \right]^{-
q_1}\left[ |a| \vee |b-a|\vee |b| \right]^{-
q_2}\left[ |a| \vee |b-a|\vee |b| \right]^{-\ga+
q_1+q_2}\\
\le & |a|  ^{-
q_1}  |b-a| ^{-
q_2}  |b|  ^{-\ga+
q_1+q_2}\,.
\end{align*}
This shows the second part of the
 second  inequality  in  \eqref{e.3.11}.
     The second  inequality in \eqref{e.3.12} can be proved similarly since    $|a|+|b-a|\ge |a|\vee |b-a|\vee |b|$.
This completes the proof of the lemma.
\end{proof}

We immediately have the following Corollary.
\begin{corollary}\label{c.3.2}
  The following conclusion holds true.
\begin{equation}
\begin{cases}
\fB_1(a,b)
 \ls
 |a|^{-q}|b|^{-\ga +q} & \qquad \hbox{for any $q\in [0, (1-\be)\wedge \ga]$};
 \\
\fB_2(a,b)
 \ls
 |a|^{-q}|b-a|^{-\ga +q}& \qquad
 \hbox{for any $q\in [0, (1-\al)\wedge \ga]$}.
 \end{cases}
 \end{equation}
\end{corollary}

Now we give a lemma that indicates the upper bound of the integrations with respect to $\vbfs$ and $\vbfr$ in Proposition \ref{est_un}.
\begin{lemma}\label{lem_mainterm}
Let $0\le a<A<\infty$,  $0\le b<B<\infty$
and let $\al, \be, \ga\in [0, 1)$, $ \ga<  2- \be-(\be\vee \al) $.
Denote
\begin{equation}\label{mainterm}
\cI_{\alpha,\beta,\gamma}(A,a;B,b):=\int_{{a<s<A \atop b<r<B}} |A-s|^{-\alpha}|s-a|^{-\beta}|s-r|^{-\gamma}|B-r|^{-\alpha}|r-b|^{-\beta}dsdr\,.
\end{equation}
Then  for any $q_2\in (0\vee(\be+\ga-1), (1-\al)\wedge \frac{\ga}{2}]$,
\begin{align}\label{main_est}
\cI_{\alpha,\beta,\gamma}(A,a;B,b)\ls   &  |A-a|^{-\al-\be+1-  \frac{\ga}{2}}|B-b|^{-\alpha-\be+1- \frac{\ga}{2} } \\
   &\qquad +   |A-a|^{-\al-\be+1-q_2}|B-b|^{-\alpha-\be+1- q_2 }  |A-B |^{-\ga+2q_2}   \,.
\end{align}
\end{lemma}

\begin{proof}
Making substitution  $s=a+(A-a)w,\  r=b+(B-b)v$
and denoting   $\bar{a}:=A-a,\ b_v:=(B-b)v-(a-b)$ and $M:=|A-a|^{-\alpha-\beta+1}\cdot|B-b|^{-\alpha-\beta+1}$,  we have
\begin{align}\label{term_wv}
\cI_{\alpha,\beta,\gamma}(A,a;B,b)=
&M
\times\int_0^1\int_0^1 |1-w|^{-\alpha}w^{-\beta}
|1-v|^{-\alpha}v^{-\beta}|\bar{a}w-b_v|^{-\gamma} dwdv
\nonumber\\
=& M
\times\int_0^1
|1-v|^{-\alpha}v^{-\beta}  \tilde  \cI (v)  dv\,.
\end{align}
where by Corollary  \ref{c.3.2} $\tilde  \cI (v)$ is defined and bounded as follows.
\begin{align*}
\tilde  \cI (v)
 =&(\int_0^{\frac12}+\int_{\frac12}^1)\lt[|1-w|^{-\alpha}w^{-\beta}|\bar{a}w-b_v|^{-\gamma}\rt]dw\\
\le& \bar a^{-q_1}  |b_v|^{-\ga+q_1}+\bar a^{-q_2}|b_v-\bar a|^{-\ga+q_2} \,,
\end{align*}
where $q_1\in   (0, (1-\be)\wedge \ga)$ and $q_2\in(0, (1-\al)\wedge \ga)$.
Thus, we have
\begin{align}
\cI_{\alpha,\beta,\gamma}(A,a;B,b)
=  M\bar a^{-q_1}
\times&  \int_0^1
|1-v|^{-\alpha}v^{-\beta}     |b_v|^{-\ga+q_1}  dv\nonumber\\
+&M\bar a^{-q_2} \int_0^1
|1-v|^{-\alpha}v^{-\beta}     |b_v-\bar a|^{-\ga+q_2}  dv \nonumber\\
  =: M\bar a^{-q_1}
\times&   \fB_1+M\bar a^{-q_2} \fB_2    \,,\label{e.3.22}
\end{align}
where $\fB_1$ and $\fB_2$ are defined and dealt with as follows.
%
By Lemma \ref{l.3.1} we  obtain
\begin{align}
\fB_1
 :=&\int_0^1|1-v|^{-\alpha}|v|^{-\beta}\cdot |(B-b)v-(a-b)|^{-\gamma+q_1}dv\nonumber\\
 \ls &\int_0^{1/2}|v|^{-\beta}|(B-b)v-(a-b)|^{-\gamma+q_1}dv\nonumber\\
 &\qquad\qquad +\int_0^{1/2}|v|^{-\alpha}|(B-b)v-(B-a)|^{-\gamma+q_1}dv\nonumber\\
  \ls & |B-b|^ {-\gamma+q_1 } +(|B-b|+|B-a|)^{-\gamma+q_1}\nonumber\\
 \ls &  
 |B-b|^ {-\gamma+q_1 }  \,,\label{e.3.24}
\end{align}
where in obtaining the bound for the above first integral we need
\[
\begin{cases}
q_1>\ga+\be -1 \quad \hbox{ and hence}\quad
\  \ga+\be -1< q_1< (1-\be)\wedge \ga\\
q_1>\ga+\al-1\quad \hbox{ and hence}\quad
\  \ga+\al -1< q_1< (1-\be)\wedge \ga\\
\end{cases}
\]
 which is possible   by the assumption  $\ga<2-  \be
 - (\be \vee \al)$.
Now we  turn to bound $\fB_2$.
\begin{align}\label{cI_2}
 \fB_2&:=\int_0^1|1-v|^{-\alpha}|v|^{-\beta}|(B-b)v-(A-b)|^{-\gamma+q_2}dv\nonumber\\
&\ls \int_0^{1/2}|v|^{-\beta}|(B-b)v-(A-b)|^{-\gamma+q_2}dv\nonumber\\
& \qquad +\int_0^{1/2}|v|^{-\alpha}|(B-b)v-(B-A)|^{-\gamma+q_2}dv\nonumber\\
&\ls (|B-b|+|A-b|)^{-\gamma+q_2}+ |B-b|^{-q_3} |B-A|^{-\gamma+q_2+q_3 }\nonumber\\
&\ls  |B-b|^{-q_3} |B-A|^{-\gamma+q_2+q_3 }\,,
\end{align}
where $\ga+\be-1< q_2\le(1-\al)\wedge \ga$, $q_3\in [0, (1-\al)\wedge (\ga-q_2)]$.  Substituting the bounds \eqref{e.3.24}-\eqref{cI_2}
for $\fB_1$ and $\fB_2$ into \eqref{e.3.22}, we have
\begin{align*}
\cI_{\alpha,\beta,\gamma}(A,a;B,b)
\ls & M\bar a^{-q_1} |B-b|^{-\ga+ q_1}
 \nonumber\\
 &  +M\bar a^{-q_2}  |B-b|^{-q_3} |B-A|^{-\gamma+q_2+q_3 }  \nonumber\\
\ls & |A-a|^{-\al-\be+1-q_1}|B-b|^{-\alpha-\be+1-\ga+ q_1}  \nonumber\\
 &  + |A-a|^{-\al-\be+1-q_2}|B-b|^{-\alpha-\be+1 -q_3 }  |A-B |^{-\ga+ q_2+q_3}.
\end{align*}
If we integrate $r$ first, then we will have
\begin{align*}
\cI_{\alpha,\beta,\gamma}(A,a;B,b)
\ls & |B-b|^{-\al-\be+1-q_1}|A-a|^{-\alpha-\be+1-\ga+ q_1}  \nonumber\\
 &  + |B-b|^{-\al-\be+1-q_2}|A-a|^{-\alpha-\be+1-q_3  }  |A-B |^{-\ga+ q_2+q_3}   \,.
\end{align*}
Thus, by choosing $q_2=q_3\in (0\vee(\ga+\be-1), (1-\al)\wedge \frac{\ga}{2}]$, it holds
\begin{align*}
\begin{cases}
\cI_{\alpha,\beta,\gamma}(A,a;B,b)
\ls & |B-b|^{-\al-\be+1-q_1}|A-a|^{-\alpha-\be+1-\ga+ q_1}  \nonumber\\
 &  + |B-b|^{-\al-\be+1-q_2}|A-a|^{-\alpha-\be+1-q_2 }  |A-B |^{-\ga+ 2q_2 } \,;  \quad \hbox{and} \\
\cI_{\alpha,\beta,\gamma}(A,a;B,b)
\ls & |A-a|^{-\al-\be+1-q_1}|B-b|^{-\alpha-\be+1-\ga+ q_1}  \nonumber\\
 &  + |A-a|^{-\al-\be+1-q_2}|B-b|^{-\alpha-\be+1 -q_2 }  |A-B |^{-\ga+ 2q_2}\,.
 \end{cases}
\end{align*}

Denote
\begin{align*}
\mu:=
&\min (|A-a|^{-\al-\be+1-q_1}|B-b|^{-\alpha-\be+1-\ga+ q_1} \,,\\
&\qquad
|B-b|^{-\al-\be+1-q_1}|A-a|^{-\alpha-\be+1-\ga+ q_1} ) \\
=&|A-a|^{-\al-\be+1 }|B-b|^{-\alpha-\be+1 } \\
&\qquad \min (|A-a|^{ -q_1}|B-b|^{  -\ga+ q_1} \,, |B-b|^{ -q_1}|A-a|^{ -\ga+ q_1} ) \\
\le&  |A-a|^{-\al-\be+1 }|B-b|^{-\alpha-\be+1 } \\
&\qquad \left(|A-a|^{ -q_1}|B-b|^{  -\ga+ q_1}\right)^{1/2}
\left( |B-b|^{ -q_1}|A-a|^{ -\ga+ q_1} \right)^{1/2}  \\ 
=& |A-a|^{-\al-\be+1-\frac{\ga}{2}}|B-b|^{-\alpha-\be+1- \frac{\ga}{2} }\,.
\end{align*}
Then
\begin{align*}
\cI_{\alpha,\beta,\gamma}(A,a;B,b)
\ls &\mu   + |A-a|^{-\al-\be+1-q_2}|B-b|^{-\alpha-\be+1- q_2 }  |A-B |^{-\ga+2q_2}\\
   \le &  |A-a|^{-\al-\be+1-  \frac{\ga}{2}}|B-b|^{-\alpha-\be+1- \frac{\ga}{2} } \\
   &\qquad +   |A-a|^{-\al-\be+1-q_2}|B-b|^{-\alpha-\be+1- q_2 }  |A-B |^{-\ga+2q_2}   \,.
\end{align*}
This completes the proof of the lemma.
\end{proof}

\section*{Acknowledgement}
We would like to express our sincere gratitude to the anonymous reviewer and the editor of this paper for their invaluable contributions and constructive feedback, which have significantly enhanced the quality and clarity of our work. 
The authors thank Prof. Xia Chen   for bringing his work  \cite{Ch2}
to our attention.

\bibliographystyle{amsplain}
\bibliography{Ref_NS_cond}

\providecommand{\bysame}{\leavevmode\hbox to3em{\hrulefill}\thinspace}
\providecommand{\MR}{\relax\ifhmode\unskip\space\fi MR }
\providecommand{\MRhref}[2]{%
  \href{http://www.ams.org/mathscinet-getitem?mr=#1}{#2}
}
\providecommand{\href}[2]{#2}
\begin{thebibliography}{10}

\bibitem{ALNS2015}
Florin Avram, Nikolai Leonenko, and Ludmila Sakhno, \emph{Limit theorems for
  additive functionals of stationary fields, under integrability assumptions on
  the higher order spectral densities}, Stochastic Process. Appl. \textbf{125}
  (2015), no.~4, 1629--1652. \MR{3310359}

\bibitem{BC2014}
Raluca~M. Balan and Daniel Conus, \emph{A note on intermittency for the
  fractional heat equation}, Statist. Probab. Lett. \textbf{95} (2014), 6--14.
  \MR{3262944}

\bibitem{Balan}
\bysame, \emph{Intermittency for the wave and heat equations with fractional
  noise in time}, Ann. Probab. \textbf{44} (2016), no.~2, 1488--1534.
  \MR{3474476}

\bibitem{BJQ2015}
Raluca~M. Balan, Maria Jolis, and Llu\'{\i}s Quer-Sardanyons, \emph{S{PDE}s
  with affine multiplicative fractional noise in space with index
  {$\frac14<H<\frac12$}}, Electron. J. Probab. \textbf{20} (2015), no. 54, 36.
  \MR{3354614}

\bibitem{BJQ2017}
\bysame, \emph{Intermittency for the hyperbolic {A}nderson model with rough
  noise in space}, Stochastic Process. Appl. \textbf{127} (2017), no.~7,
  2316--2338. \MR{3652415}

\bibitem{BalanSong2019}
Raluca~M. Balan and Jian Song, \emph{Second order {L}yapunov exponents for
  parabolic and hyperbolic {A}nderson models}, Bernoulli \textbf{25} (2019),
  no.~4A, 3069--3089. \MR{4003574}

\bibitem{BCCT2008}
Jonathan Bennett, Anthony Carbery, Michael Christ, and Terence Tao, \emph{The
  {B}rascamp-{L}ieb inequalities: finiteness, structure and extremals}, Geom.
  Funct. Anal. \textbf{17} (2008), no.~5, 1343--1415. \MR{2377493}

\bibitem{BCCT2010}
\bysame, \emph{Finite bounds for {H}\"{o}lder-{B}rascamp-{L}ieb multilinear
  inequalities}, Math. Res. Lett. \textbf{17} (2010), no.~4, 647--666.
  \MR{2661170}

\bibitem{BL1976}
Herm~Jan Brascamp and Elliott~H. Lieb, \emph{Best constants in {Y}oung's
  inequality, its converse, and its generalization to more than three
  functions}, Advances in Math. \textbf{20} (1976), no.~2, 151--173.
  \MR{412366}

\bibitem{CHKN18}
Le~Chen, Yaozhong Hu, Kamran Kalbasi, and David Nualart, \emph{Intermittency
  for the stochastic heat equation driven by a rough time fractional {G}aussian
  noise}, Probab. Theory Related Fields \textbf{171} (2018), no.~1-2, 431--457.
  \MR{3800837}

\bibitem{Chen2016}
Xia Chen, \emph{Spatial asymptotics for the parabolic {A}nderson models with
  generalized time-space {G}aussian noise}, Ann. Probab. \textbf{44} (2016),
  no.~2, 1535--1598. \MR{3474477}

\bibitem{Ch2}
\bysame, \emph{Parabolic {A}nderson model with a fractional {G}aussian noise
  that is rough in time}, Ann. Inst. Henri Poincar\'{e} Probab. Stat.
  \textbf{56} (2020), no.~2, 792--825. \MR{4076766}

\bibitem{CDOT2021}
Xia Chen, Aur\'elien Deya, Cheng Ouyang, and Samy Tindel, \emph{Moment
  estimates for some renormalized parabolic {A}nderson models}, Ann. Probab.
  \textbf{49} (2021), no.~5, 2599--2636. \MR{4317714}

\bibitem{CDST2021}
Xia Chen, Aur\'{e}lien Deya, Jian Song, and Samy Tindel, \emph{Solving the
  hyperbolic anderson model 1: Skorohod setting}, accepted by Annales de
  l'Institut Henri Poincar\'{e} Probabilit\'{e}s et Statistiques (2021).

\bibitem{CH2021}
Zhen-Qing Chen and Yaozhong Hu, \emph{Solvability of parabolic anderson
  equation with fractional gaussian noise}, Communications in Mathematics and
  Statistics. 
  arXiv:2101.05997 (2021).

\bibitem{Corwin2020}
Ivan Corwin and Promit Ghosal, \emph{Lower tail of the {KPZ} equation}, Duke
  Math. J. \textbf{169} (2020), no.~7, 1329--1395. \MR{4094738}

\bibitem{Dalang}
Robert~C. Dalang, \emph{Extending the martingale measure stochastic integral
  with applications to spatially homogeneous s.p.d.e.'s}, Electron. J. Probab.
  \textbf{4} (1999), no. 6, 29. \MR{1684157}

\bibitem{Hairer}
Martin Hairer, \emph{Solving the {KPZ} equation}, Ann. of Math. (2)
  \textbf{178} (2013), no.~2, 559--664. \MR{3071506}

\bibitem{Hu2002}
Yaozhong Hu, \emph{Chaos expansion of heat equations with white noise
  potentials}, Potential Anal. \textbf{16} (2002), no.~1, 45--66. \MR{1880347}

\bibitem{hubook}
\bysame, \emph{Analysis on {G}aussian spaces}, World Scientific Publishing Co.
  Pte. Ltd., Hackensack, NJ, 2017. \MR{3585910}

\bibitem{hurecent}
\bysame, \emph{Some recent progress on stochastic heat equations}, Acta Math.
  Sci. Ser. B (Engl. Ed.) \textbf{39} (2019), no.~3, 874--914. \MR{4066510}

\bibitem{HHLNT2}
Yaozhong Hu, Jingyu Huang, Khoa L{\^e}, David Nualart, and Samy Tindel,
  \emph{Parabolic anderson model with rough dependence in space}, The Abel
  Symposium, Springer, 2016, pp.~477--498.

\bibitem{HHLNT1}
\bysame, \emph{Stochastic heat equation with rough dependence in space}, The
  Annals of Probability \textbf{45} (2017), no.~6B, 4561--4616.

\bibitem{HHNT}
Yaozhong Hu, Jingyu Huang, David Nualart, and Samy Tindel, \emph{Stochastic
  heat equations with general multiplicative {G}aussian noises: {H}\"older
  continuity and intermittency}, Electron. J. Probab. \textbf{20} (2015), no.
  55, 50. \MR{3354615}

\bibitem{HL17}
Yaozhong Hu and Khoa L\^{e}, \emph{Nonlinear {Y}oung integrals and differential
  systems in {H}\"{o}lder media}, Trans. Amer. Math. Soc. \textbf{369} (2017),
  no.~3, 1935--2002. \MR{3581224}

\bibitem{HL2022}
\bysame, \emph{Asymptotics of the density of parabolic {A}nderson random
  fields}, Ann. Inst. Henri Poincar\'{e} Probab. Stat. \textbf{58} (2022),
  no.~1, 105--133. \MR{4374674}

\bibitem{ams19_huliutindel}
Yaozhong Hu, Yanghui Liu, and Samy Tindel, \emph{On the necessary and
  sufficient conditions to solve a heat equation with general additive
  {G}aussian noise}, Acta Math. Sci. Ser. B (Engl. Ed.) \textbf{39} (2019),
  no.~3, 669--690. \MR{4066499}

\bibitem{HN09}
Yaozhong Hu and David Nualart, \emph{Stochastic heat equation driven by
  fractional noise and local time}, Probab. Theory Related Fields \textbf{143}
  (2009), no.~1-2, 285--328. \MR{2449130}

\bibitem{HNS}
Yaozhong Hu, David Nualart, and Jian Song, \emph{Feynman--kac formula for heat
  equation driven by fractional white noise}, The Annals of Probability
  \textbf{39} (2011), no.~1, 291--326.

\bibitem{HWIntermittency}
Yaozhong Hu and Xiong Wang, \emph{Intermittency properties for a large class of
  stochastic pdes driven by fractional space-time noises}, 2021.

\bibitem{HW2019}
\bysame, \emph{Stochastic heat equation with general rough noise}, Ann. Inst.
  Henri Poincar\'{e} Probab. Stat. \textbf{58} (2022), no.~1, 379--423.
  \MR{4374680}

\bibitem{KKX2017}
Davar Khoshnevisan, Kunwoo Kim, and Yimin Xiao, \emph{Intermittency and
  multifractality: a case study via parabolic stochastic {PDE}s}, Ann. Probab.
  \textbf{45} (2017), no.~6A, 3697--3751. \MR{3729613}

\bibitem{Lehec2014}
Joseph Lehec, \emph{Short probabilistic proof of the {B}rascamp-{L}ieb and
  {B}arthe theorems}, Canad. Math. Bull. \textbf{57} (2014), no.~3, 585--597.
  \MR{3239122}

\bibitem{HLW2021}
Shuhui Liu, Yaozhong Hu, and Xiong Wang, \emph{Nonlinear stochastic wave
  equation driven by rough noise}, Journal of Differential Equations
  \textbf{331} (2022), 99--161.

\bibitem{Nualartbook}
David Nualart, \emph{The {M}alliavin calculus and related topics}, second ed.,
  Probability and its Applications (New York), Springer-Verlag, Berlin, 2006.
  \MR{2200233}

\bibitem{QRV2024}
Jeremy Quastel, Alejandro Ramirez, and Balint Virag, \emph{Kpz fluctuations in
  the planar stochastic heat equation}, 2024.

\bibitem{Shi}
Zuoshunhua Shi, Di~Wu, and Dunyan Yan, \emph{On the multilinear fractional
  integral operators with correlation kernels}, J. Fourier Anal. Appl.
  \textbf{25} (2019), no.~2, 538--587. \MR{3917957}

\bibitem{SSX2020}
Jian Song, Xiaoming Song, and Fangjun Xu, \emph{Fractional stochastic wave
  equation driven by a {G}aussian noise rough in space}, Bernoulli \textbf{26}
  (2020), no.~4, 2699--2726. \MR{4140526}

\bibitem{Tsai2022}
Li-Cheng Tsai, \emph{Exact lower-tail large deviations of the kpz equation},
  Duke Mathematical Journal \textbf{171} (2022), no.~9, 1879--1922.

\bibitem{Wu}
Di~Wu, Zuoshunhua Shi, Xudong Nie, and Dunyan Yan, \emph{On a {$k$}-fold beta
  integral formula}, J. Geom. Anal. \textbf{30} (2020), no.~4, 4240--4267.
  \MR{4167283}

\bibitem{Zhou}
Yongliang Zhou, Yangkendi Deng, Di~Wu, and Dunyan Yan, \emph{Necessary and
  sufficient conditions on weighted multilinear fractional integral
  inequality}, Commun. Pure Appl. Anal. \textbf{21} (2022), no.~2, 727--747.
  \MR{4376316}

\end{thebibliography}

\end{document}